\DeclareFontFamily{U}{wncy}{}
\DeclareFontShape{U}{wncy}{m}{n}{<->wncyr10}{}
\DeclareSymbolFont{mcy}{U}{wncy}{m}{n}
\DeclareMathSymbol{\sha}{\mathord}{mcy}{"58}
\renewcommand{\mathcal}{\EuScript}
\theoremstyle{plain}
\def\swappedhead#1#2#3{%
	\thmnumber{\@upn{\the\thm@headfont#2\@ifnotempty{#1}{.~}}}%
	\thmname{#1}%
	\thmnote{ {\the\thm@notefont(#3)}}}
\newtheorem{thm}{Theorem}[section]
\newtheorem{thmA}[thm]{Theorem A}
\newtheorem{thmB}[thm]{Theorem B}
\newtheorem{lem}[thm]{Lemma}
\newtheorem{prop}[thm]{Proposition}
\newtheorem{cor}[thm]{Corollary}
\theoremstyle{definition}
\newtheorem{example}[thm]{Example}
\newtheorem{rem}[thm]{Remark}
\newtheorem{defn}[thm]{Definition}
\newtheorem{para}[thm]{}
\newcommand{\K}{\mathbb{K}}
\newcommand{\Z}{\mathbf{Z}}
\newcommand{\rats}{\mathbf{Q}}
\renewcommand{\P}{\mathsf{P}}
\newcommand{\Q}{\mathsf{Q}}
\newcommand{\C}{\mathsf{C}}
\newcommand{\D}{\mathsf{D}}
\newcommand{\M}{\mathsf{M}}
\newcommand{\N}{\mathsf{N}}
\newcommand{\com}{\mathsf{Com}}
\newcommand{\ass}{\mathsf{Ass}}
\newcommand{\lie}{\mathsf{Lie}}
\newcommand{\coass}{\mathsf{coAss}}
\newcommand{\colie}{\mathsf{coLie}}
\newcommand{\End}{\mathsf{End}}
\newcommand{\coEnd}{\mathsf{coEnd}}
\newcommand{\Aoo}{\mathsf{A}_\infty}
\newcommand{\Coo}{\mathsf{C}_\infty}
\newcommand{\Loo}{\mathsf{L}_\infty}
\newcommand{\susp}{S^{-1}} 
\newcommand{\Palg}{\mathsf{P}\text{-}\mathsf{alg}}
\newcommand{\Picoalg}{\mathsf{P}^\antishriek\text{-}\mathsf{coalg}}
\newcommand{\Qalg}{\mathsf{Q}\text{-}\mathsf{alg}}
\newcommand{\Qicoalg}{\mathsf{Q}^\antishriek\text{-}\mathsf{coalg}}
\newcommand{\BarLC}{\mathcal{C}}
\newcommand{\CobarLC}{\mathcal{L}}
\newcommand{\BarAC}{\mathrm{B}}
\newcommand{\CobarAC}{\Omega}
\newcommand{\sym}{\mathrm{Sym}}
\newcommand{\Def}{\mathrm{Def}}
\newcommand{\coDef}{\mathrm{coDef}}
\newcommand{\F}{{F}}
\newcommand{\Gr}{\operatorname{Gr}}
\newcommand{\MC}{\mathrm{MC}}
\newcommand*{\doublerightarrow}[2]{\mathrel{
		\settowidth{\@tempdima}{$\scriptstyle#1$}
		\settowidth{\@tempdimb}{$\scriptstyle#2$}
		\ifdim\@tempdimb>\@tempdima \@tempdima=\@tempdimb\fi
		\mathop{\vcenter{
				\offinterlineskip\ialign{\hbox to\dimexpr\@tempdima+1em{##}\cr
					\rightarrowfill\cr\noalign{\kern.5ex}
					\rightarrowfill\cr}}}\limits^{\!#1}_{\!#2}}}
\newcommand*{\triplerightarrow}[1]{\mathrel{
		\settowidth{\@tempdima}{$\scriptstyle#1$}
		\mathop{\vcenter{
				\offinterlineskip\ialign{\hbox to\dimexpr\@tempdima+1em{##}\cr
					\rightarrowfill\cr\noalign{\kern.5ex}
					\rightarrowfill\cr\noalign{\kern.5ex}
					\rightarrowfill\cr}}}\limits^{\!#1}}}
\newcommand{\QQ}{\mathrm Q}
\title{Lie, associative and commutative quasi-isomorphism}
\author{Ricardo Campos}
\address{Ricardo Campos\\Institut de Mathématiques de Toulouse, UMR5219, Université de Toulouse, CNRS, UPS, F-31062 Toulouse Cedex 9, France}
\email{ricardo.campos@math.univ-toulouse.fr}
\author{Dan Petersen}
\address{Dan Petersen\\{Matematiska Institutionen, Stockholms Universitet, 106 91 Stockholm, Sweden}}
\email{dan.petersen@math.su.se}
\author{Daniel Robert-Nicoud}
\address{Daniel Robert-Nicoud}
\email{daniel.robertnicoud@gmail.com}
\author{Felix Wierstra}
\address{Felix Wierstra\\{Korteweg-de Vries Institute for Mathematics,
University of Amsterdam, Science Park 105-107, 1098 XG Amsterdam, the Netherlands}}
\email{felix.wierstra@gmail.com}
\subjclass[2010]{{Primary 13D10; secondary 13D03, 16E40, 17B35, 18D50, 55P62}}
\keywords{Rational homotopy theory, universal enveloping algebras, deformation theory, operads, Koszul duality}
\thanks{}
\begin{document}

\begin{abstract}
	Over a field of characteristic zero, we show that two commutative differential graded (dg) algebras are quasi-isomorphic if and only if they are quasi-isomorphic as associative dg algebras. This answers a folklore problem in rational homotopy theory, showing that the rational homotopy type of a space is determined by its associative dg algebra of rational cochains. We also show a Koszul dual statement, under an additional completeness hypothesis: two homotopy complete dg Lie algebras whose universal enveloping algebras are quasi-isomorphic as associative dg algebras must themselves be quasi-isomorphic. The latter result applies in particular to nilpotent Lie algebras (not differential graded), in which case it says that two nilpotent Lie algebras whose universal enveloping algebras are isomorphic as associative algebras must be isomorphic. 
\end{abstract}

\maketitle

\setcounter{tocdepth}{1}
\tableofcontents

\newcommand{\subsec}[1]{\needspace{1\baselineskip}{\begin{center}\textbf{{#1}}\end{center}}\nopagebreak}

\newcommand{\hc}[1]{{#1}^{h\wedge}}
\newcommand{\g}{\mathfrak g}
\newcommand{\antishriek}{\text{\raisebox{\depth}{\textexclamdown}}}
\newcommand{\h}{\mathfrak h}
\renewcommand{\hom}{\mathrm{Hom}}

\setcounter{section}{-1}

\section{Introduction}

\begin{para}
	Can one recover a Lie algebra $\g$ from its universal enveloping algebra $U\g$? Over a field of characteristic zero, one possible answer is \emph{yes}, as follows: the set of primitive elements in any bialgebra form a Lie algebra, and a consequence of the Poincar\'e--Birkhoff--Witt theorem is that the Lie algebra of primitive elements in $U\g$ is isomorphic to $\g$. However, note that this answer to the question assumes that we know the bialgebra structure of $U\g$. Let us suppose that we are only given $U\g$ as an associative algebra --- is it still possible to recover the Lie algebra $\g$? This question is in fact an open problem, which seems to have been first stated in print by Bergman \cite[p.\ 187]{bergman}. The results of this paper imply a positive answer to the question for \emph{nilpotent} Lie algebras: a nilpotent Lie algebra $\g$ is completely determined up to isomorphism by the associative algebra $U\g$. Perhaps surprisingly, the proof of this very concrete result will require passing through a study of the abstract deformation theory of $\infty$-algebras over operads. Our original motivation was a seemingly unrelated question arising from rational homotopy theory.
\end{para}

\begin{para}
	Recall that two (commutative, associative, Lie, ...) differential graded algebras $A$ and $B$ are said to be \emph{quasi-isomorphic} if they can be linked by a zig-zag
	\[
	A\stackrel{\sim}{\longleftarrow}\bullet\stackrel{\sim}{\longrightarrow}\cdots\stackrel{\sim}{\longleftarrow}\bullet\stackrel{\sim}{\longrightarrow}B
	\]
	of morphisms of (commutative, associative, Lie, ...) algebras, each of which induces an isomorphism on homology. 
\end{para}

\begin{para}
	A commutative dg algebra is in particular an associative dg algebra. This means that there are two a priori different notions of what it means for two commutative dg algebras to be quasi-isomorphic, since there are many more potential zig-zags in the larger category of associative dg algebras. One is led to ask: if two commutative dg algebras are quasi-isomorphic as associative dg algebras, must they be quasi-isomorphic also as commutative dg algebras? This turns out to be a surprisingly subtle question. Our first main theorem settles the question completely in characteristic zero.
\end{para}

\begin{thmA}\label{thmA intro}
	Let $A$ and $B$ be two commutative dg algebras over a field of characteristic zero. Then, $A$ and $B$ are quasi-isomorphic as  associative dg algebras if and only if they are also quasi-isomorphic as commutative dg algebras. 
\end{thmA}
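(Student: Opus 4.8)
The \emph{if} direction is formal: a zig-zag of commutative quasi-isomorphisms is in particular a zig-zag of associative quasi-isomorphisms. All the content is in the converse, and the plan is to translate both notions of quasi-isomorphism into statements about transferred $\infty$-structures on homology, and then to compare them operad by operad. Over a field of characteristic zero both $\com$ and $\ass$ are Koszul, so by the homotopy transfer theorem any commutative (resp.\ associative) dg algebra $A$ transfers a $\Coo$- (resp.\ $\Aoo$-) structure onto its homology $H(A)$ and is linked to it by an $\infty$-quasi-isomorphism. Since $\infty$-quasi-isomorphisms of $\P_\infty$-algebras are invertible up to homotopy and can be rectified to zig-zags of strict quasi-isomorphisms, I would first reduce the theorem to the following algebraic statement on homology: two commutative dg algebras are quasi-isomorphic as commutative (resp.\ associative) dg algebras if and only if the transferred $\Coo$- (resp.\ $\Aoo$-) structures on their homologies are $\infty$-isomorphic, i.e.\ related by an $\infty$-morphism whose linear component is invertible. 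Here one must run the transfer with a single choice of contraction data for both structures, so that the transferred $\Aoo$-structure is exactly the one underlying the transferred $\Coo$-structure along the operad map $\Aoo\to\Coo$ (the cobar of $\ass^{\antishriek}\to\com^{\antishriek}$) that exhibits every $\Coo$-algebra as an $\Aoo$-algebra.

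With this reduction in place, the crux becomes a purely operadic rigidity statement: if two $\Coo$-structures, on graded vector spaces $V$ and $W$, have underlying $\Aoo$-structures that are $\infty$-isomorphic, then the two $\Coo$-structures are already $\infty$-isomorphic. I would encode $\Coo$- and $\Aoo$-structures on a fixed space $V$ as Maurer--Cartan elements of the convolution dg Lie algebras $\g_{\com}=\hom(\com^{\antishriek},\End_V)$ and $\g_{\ass}=\hom(\ass^{\antishriek},\End_V)$, with $\infty$-isomorphisms realized by the corresponding gauge group actions. The surjection $\ass^{\antishriek}\twoheadrightarrow\com^{\antishriek}$ induces an inclusion of dg Lie algebras $\g_{\com}\hookrightarrow\g_{\ass}$ compatible with Maurer--Cartan elements and gauge actions, so the problem is to show that an $\Aoo$-gauge transformation carrying one $\Coo$-structure to another can be replaced by a $\Coo$-gauge transformation with the same source and target.

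To do this I would argue by induction along the arity (weight) filtration of $\g_{\ass}$, correcting the given $\Aoo$-isomorphism one arity at a time so that it respects the commutative refinement. At each stage the obstruction to improving the isomorphism lives in the cokernel of $\g_{\com}\hookrightarrow\g_{\ass}$, that is, in the ``noncommutative complement'' of the Harrison part inside the Hochschild-type deformation complex. The decisive input is that in characteristic zero this complement splits off $\mathbb{S}$-equivariantly: the $\lambda$- (Hodge) decomposition of the Hochschild complex, equivalently a splitting $\ass^{\antishriek}\cong\com^{\antishriek}\oplus(\cdots)$ of $\mathbb{S}$-modules coming from the semisimplicity of $\mathbb{Q}[S_n]$, furnishes a retraction $\g_{\ass}\to\g_{\com}$ of the inclusion. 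Applying this retraction and controlling the error terms inductively should produce the desired $\Coo$-isomorphism.

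The main obstacle is precisely this last step. The splitting exists only at the level of $\mathbb{S}$-modules and cochain complexes, not as a morphism of dg Lie algebras, since cocommutative does not split off coassociative as cooperads. One must therefore verify that the retraction interacts correctly with the \emph{nonlinear} Maurer--Cartan and gauge equations, so that it turns an $\Aoo$-isomorphism of $\Coo$-structures into a genuine $\Coo$-isomorphism rather than a mere symmetrization at the linear level; keeping track of the twisting by the Maurer--Cartan elements, the signs, and the convergence of the correction along the (possibly infinite) arity filtration is the technical heart of the argument. By contrast, the reduction via homotopy transfer and rectification is standard, and the \emph{if} direction is immediate.
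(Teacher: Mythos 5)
Your overall strategy coincides with the paper's: reduce to two $\Coo$-structures on a fixed graded vector space related by an $\Aoo$-isotopy, encode everything as Maurer--Cartan elements and gauge equivalences in the convolution Lie algebras $\Def_{\Coo}(V)\subseteq\Def_{\Aoo}(V)$, and correct the given gauge arity by arity using a retraction onto the Harrison part. The reduction step is fine. The problem is precisely the step you yourself flag as ``the technical heart'': you propose to obtain the retraction $\Def_{\Aoo}(V)\to\Def_{\Coo}(V)$ from the Hodge/$\lambda$-decomposition, i.e.\ from a splitting of $\ass^{\antishriek}\twoheadrightarrow\com^{\antishriek}$ as $\mathbb S$-modules coming from semisimplicity of the symmetric group algebras, and then to ``verify that the retraction interacts correctly with the nonlinear Maurer--Cartan and gauge equations.'' As stated this does not close: a retraction of $\mathbb S$-modules gives a retraction of filtered chain complexes with no compatibility whatsoever with the Lie brackets, and the arity-by-arity correction genuinely fails without such compatibility, because the error terms created at stage $n$ by applying the retraction need not be controllable at stage $n+1$. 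The paper explicitly notes that Theorem A cannot be deduced purely from the fact that Harrison cohomology is a direct summand of Hochschild cohomology.

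The missing content is twofold. First, one must isolate the exact amount of multiplicativity needed: it suffices that the retraction $s\colon\g\to\h$ be a morphism of \emph{$\h$-modules}, i.e.\ $s([x,y])=[s(x),y]$ for $x\in\g$ and $y\in\h$. Given this, \cref{retractionthm} (proved via the explicit iteration $a_{n+1}=\mathrm{BCH}(a_n,-s(a_n))$, $x_{n+1}=\exp(s(a_n))\cdot x_n$, whose convergence is controlled by the complete filtration) shows that a gauge equivalence in $\g$ between Maurer--Cartan elements of $\h$ descends to a gauge equivalence in $\h$. Second, to produce such an $\h$-module retraction one needs the splitting $\ass\to\lie$ to be a morphism of \emph{infinitesimal $\lie$-bimodules}, not merely of $\mathbb S$-modules; this is the real input, and it is a consequence of the Poincar\'e--Birkhoff--Witt theorem applied internally to the operad $\lie$, which gives $\ass\cong\bigoplus_{k\ge1}\sym^k(\lie)$ as infinitesimal $\lie$-bimodules (\cref{pbwiso}, \cref{pbwcor}), with $s$ the projection onto $\sym^1(\lie)=\lie$. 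Dualizing yields a morphism of infinitesimal $\colie$-bicomodules $\colie\to\coass$ and hence a $\Def_{\Coo}(V)$-module retraction of the deformation complexes, after which \cref{retractionthm} finishes the proof. So your skeleton matches the paper's, but the lemma you would need to supply to fill the gap is exactly this pair of statements, and the $\mathbb S$-module splitting you invoke is strictly weaker than what is required.
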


\begin{para}
	Our second main theorem is \emph{Koszul dual} to Theorem \hyperref[thmA intro]{A}, informally speaking. The Koszul dual of a commutative dg algebra is a dg Lie algebra, and vice versa, and the Koszul dual of an associative dg algebra is an associative dg algebra. Moreover, Koszul duality interchanges the forgetful functor from commutative dg algebras to associative dg algebras and the universal enveloping functor from dg Lie algebras to associative dg algebras. Thus, one might expect a Koszul dual form of Theorem \hyperref[thmA intro]{A} to assert that if two dg Lie algebras have quasi-isomorphic universal enveloping algebras, then the dg Lie algebras are themselves quasi-isomorphic. Unfortunately we are not able to prove this statement (and it is not clear if one should expect it to be true), since Koszul duality is not an equivalence; some information is lost when passing from one side of the Koszul duality correspondence to the other.\footnote{However, by working with a different equivalence relation on coalgebras one can obtain a version of Koszul duality which is a genuine equivalence of categories --- see \cref{remark: general koszul duality}.} In order to carry out the proof we need to restrict our attention to a certain subcategory of homotopically complete algebras. This completeness hypothesis is also what one might expect based on recent work of Heuts \cite{heuts}, although our work is independent of his.
\end{para}

\begin{para}
	In order to state the next theorem we need to recall the notion of \emph{homotopy completion}, introduced by Harper--Hess \cite{harperhess} (in a far more general setting than the one considered here). Let $\g$ be a dg Lie algebra over a field of characteristic zero, with lower central series filtration
	\[
	\g = L^1\g \supseteq L^2 \g \supseteq L^3 \g \supseteq \ldots
	\]
	We define the \emph{completion} of $\g$ as the inverse limit $\g^\wedge \coloneqq \varprojlim \g/L^n\g$. The \emph{homotopy completion} of $\g$, denoted $\hc \g$, is defined to be the completion of a cofibrant replacement of $\g$, for example the bar-cobar resolution of $\g$. Any quasi-isomorphism between cofibrant dg Lie algebras induces a quasi-isomorphism between completions, so the homotopy completion is well defined up to quasi-isomorphism. 
\end{para}

\begin{thmB}\label{thm B}
	Let $\g$ and $\h$ be dg Lie algebras over a field of characteristic zero. If their universal enveloping algebras $U\g$ and $U\h$ are quasi-isomorphic as associative dg algebras then the homotopy completions $\hc\g$ and $\hc\h$ are quasi-isomorphic as dg Lie algebras.
\end{thmB}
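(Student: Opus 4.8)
The plan is to deduce the theorem from Theorem~\hyperref[thmA intro]{A} by running the Koszul duality dictionary in reverse: I would translate the hypothesis on the associative algebras $U\g$ and $U\h$ into a statement about cocommutative coalgebras, and then transport the resulting equivalence back to the Lie side. The starting observation is that the associative bar construction $\BarAC$ is homotopy invariant on augmented dg algebras, so a zig-zag of associative quasi-isomorphisms witnessing $U\g \simeq U\h$ induces a zig-zag of quasi-isomorphisms of conilpotent coassociative dg coalgebras between $\BarAC(U\g)$ and $\BarAC(U\h)$.

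Next I would invoke the classical comparison between Lie algebra homology and $\mathrm{Tor}$ over the enveloping algebra. Poincar\'e--Birkhoff--Witt and antisymmetrization provide a natural quasi-isomorphism $\BarLC(\g) \to \BarAC(U\g)$ of coassociative coalgebras, where $\BarLC(\g)$ denotes the Chevalley--Eilenberg coalgebra $(\sym(\g[1]), d_{\mathrm{CE}})$ regarded as merely coassociative by forgetting its cocommutative comultiplication, and similarly for $\h$. Combined with the previous step, this produces a zig-zag of quasi-isomorphisms of coassociative dg coalgebras between $\BarLC(\g)$ and $\BarLC(\h)$.

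The heart of the argument is then the cocommutative-coalgebra analogue of Theorem~\hyperref[thmA intro]{A}: two cocommutative dg coalgebras that are quasi-isomorphic as coassociative dg coalgebras are already quasi-isomorphic as cocommutative dg coalgebras. This is the genuine Koszul-dual incarnation of Theorem~\hyperref[thmA intro]{A}, and I expect it to be established by the same (co)operadic deformation-theoretic analysis that underlies that theorem. Applying it to the cocommutative coalgebras $\BarLC(\g)$ and $\BarLC(\h)$ upgrades the coassociative zig-zag of the previous paragraph to a zig-zag of quasi-isomorphisms of \emph{cocommutative} dg coalgebras.

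Finally I would apply the Lie cobar construction $\CobarLC$, which sends cocommutative coalgebras to dg Lie algebras and produces the cofibrant resolutions $\CobarLC\BarLC(\g) \to \g$ and $\CobarLC\BarLC(\h) \to \h$. Here lies the main obstacle, and the reason the conclusion concerns the homotopy completions rather than $\g$ and $\h$ themselves: the cobar construction involves an infinite sum over cogenerator length and does \emph{not} preserve quasi-isomorphisms of conilpotent coalgebras on the nose. It does, however, induce a quasi-isomorphism on the associated graded of the relevant filtration, so the induced maps become quasi-isomorphisms after completion. Since $\hc\g$ and $\hc\h$ are by definition the completions of the cofibrant resolutions $\CobarLC\BarLC(\g)$ and $\CobarLC\BarLC(\h)$, completing the cocommutative zig-zag produced above yields the desired quasi-isomorphism $\hc\g \simeq \hc\h$. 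The information discarded by this completion is precisely what prevents us from concluding that $\g$ and $\h$ are themselves quasi-isomorphic, in accordance with the fact that Koszul duality is an equivalence only after passing to the completed, homotopically pro-nilpotent setting.
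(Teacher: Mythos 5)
Your overall architecture matches the paper's: pass to the bar/Chevalley--Eilenberg side, compare $\BarLC\g$ and $\BarLC\h$ as coassociative coalgebras, upgrade to the cocommutative world by a dual form of Theorem A, and then apply a completed cobar construction to land on the homotopy completions. However, the step you call ``the heart of the argument'' contains a genuine gap. You assert that two cocommutative dg coalgebras that are quasi-isomorphic as coassociative dg coalgebras are already quasi-isomorphic as cocommutative dg coalgebras, i.e.\ connected by a zig-zag of \emph{strict} cocommutative quasi-isomorphisms. The paper explicitly states that it \emph{cannot} prove this, and for a structural reason: the rectification step that works for algebras (property (3) in \S\ref{properties}, proved via the counit $\CobarAC\BarAC A \xrightarrow{\sim} A$) breaks down for coalgebras, because the uncompleted cobar construction is undefined on non--locally-finite $\infty$-morphisms and in any case fails to preserve quasi-isomorphisms (see \S\ref{failure of rectification} and \S\ref{par:cobar is not functorial}). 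What the paper's dual form of Theorem A (\cref{dualThmA}) actually delivers is only a $\Coo$-\emph{quasi-isomorphism} $\BarLC\g \rightsquigarrow \BarLC\h$, i.e.\ an $\infty$-morphism, not a zig-zag of strict coalgebra maps. The repair is to weaken your step accordingly and then observe that the completed cobar functor $\CobarLC^\wedge$ is functorial for arbitrary $\Coo$-morphisms and takes $\Coo$-quasi-isomorphisms to quasi-isomorphisms (\cref{complete cobar preserves q-iso}); this still yields $\hc\g \simeq \hc\h$. Your final paragraph already contains the right intuition about completion, but as written it is applied to a zig-zag you do not have.

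A secondary gap: the hypothesis gives a quasi-isomorphism of $U\g$ and $U\h$ as \emph{unital} associative algebras, whereas the bar construction $\BarAC$ requires augmented algebras and augmentation-preserving maps; the intermediate terms of an arbitrary zig-zag of unital quasi-isomorphisms need not be augmented, and the maps need not respect the canonical augmentations of $U\g$ and $U\h$. The paper handles this with \cref{different augmentations} and \cref{lemma:UL qi as unital algebras then also as augmented}, replacing the zig-zag by a single quasi-isomorphism between triangulated (bifibrant) models and then correcting it by an automorphism so that it preserves augmentations. Your first step silently assumes this reduction.
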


\begin{para}\label{harper-hess-criterion}
	Theorem \hyperref[thm B]{B} raises a further question. Let us say that a dg Lie algebra $\g$ is \emph{homotopy complete} if $\g$ and $\hc\g$ are quasi-isomorphic. Clearly, Theorem \hyperref[thm B]{B} becomes more useful if $\g$ and $\h$ are known to be homotopy complete, and it is natural to ask for simple conditions ensuring this. Theorem 1.12(a) of \cite{harperhess}, specialized to our situation, says that a dg Lie algebra concentrated in positive homological degrees is homotopy complete. We prove the following result. 
\end{para}

\begin{thm}\label{homotopy-complete}
	Let $\g$ be a dg Lie algebra over a field of characteristic zero. Suppose that one of the following two conditions holds:
	\begin{itemize}
		\item[i)] $\g$ is nilpotent and concentrated in nonnegative homological degree.
		\item[ii)] $\g$ is concentrated in strictly negative homological degree.
	\end{itemize}
	Then $\g$ is homotopy complete. 
\end{thm}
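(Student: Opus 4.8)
The plan is to work with the bar--cobar resolution $Q\g \coloneqq \CobarLC\BarLC\g$, a quasi-free cofibrant replacement whose underlying graded Lie algebra is the free Lie algebra $\lieop{W}$ on $W \coloneqq \susp\overline{\BarLC\g}$. By definition $\hc\g = (Q\g)^\wedge$ while $Q\g \xrightarrow{\sim}\g$, so $\g$ is homotopy complete if and only if the canonical completion map
\[
c\colon Q\g \longrightarrow (Q\g)^\wedge = \varprojlim_n Q\g/L^n Q\g
\]
is a quasi-isomorphism. The first step is to identify the lower central series filtration on $\lieop{W}$ with the filtration by bracket length, so that $\Gr^n Q\g = \mathfrak{Lie}^{(n)}(W)$. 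Since the differential of $Q\g$ is a derivation raising bracket length by $\geq 0$, it preserves this filtration; the filtration is Hausdorff (every element of a free Lie algebra has bounded bracket length), so $c$ is injective, and $\Gr c$ is an isomorphism because completion preserves the associated graded. Everything thus reduces to a convergence question for these filtrations.

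The second step is a degree count. Computing the homological degrees occupied by $W$ from those of $\g$ (through the suspension in $\BarLC$ and the desuspension $\susp$ in $\CobarLC$), one finds in case (ii) that $W$ is concentrated in strictly negative degrees, so $\mathfrak{Lie}^{(n)}(W)$ lies in degrees $\leq -n$. Hence in each fixed homological degree only finitely many bracket lengths contribute: the filtration is degreewise finite, $Q\g$ is already complete, and $c$ is an isomorphism. This settles case (ii), and the mirror-image count reproves the Harper--Hess strictly-positive case. In case (i) the same count gives $W$ in nonnegative degrees, but now the degree-zero generators --- governed by $\g_0$ --- produce bracket-length classes in all weights inside homological degree $0$, so the filtration is \emph{not} degreewise finite and nilpotency must be used.

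For case (i) the plan is to analyse the tower of nilpotent quotients $\{Q\g/L^n Q\g\}_n$, each carrying a bounded bracket-length filtration and hence a convergent spectral sequence with $E_1^{(m)} = \mathfrak{Lie}^{(m)}(H)$, where $H$ is the homology of $W$ for the bracket-length-preserving part of the differential (so that $H \cong \susp\overline{H^{\mathrm{CE}}_\bullet(\g)}$, using char-zero exactness of $\mathfrak{Lie}^{(m)}$). Since the tower is degreewise surjective, the Milnor sequence
\[
0 \to {\varprojlim_n}^1 H_{\bullet+1}(Q\g/L^n Q\g) \to H_\bullet\big((Q\g)^\wedge\big) \to \varprojlim_n H_\bullet(Q\g/L^n Q\g) \to 0
\]
reduces the claim to showing that this tower is pro-isomorphic to the constant tower $H_\bullet(\g) = H_\bullet(Q\g)$. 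The mechanism should be that the genuine homology $H_\bullet(\g)$ sits in bounded bracket length (it is the abutment), whereas the artifact classes created by truncating the resolution are detected in top weight $n$ and have vanishing transition maps, hence are pro-zero; this forces ${\varprojlim}^1 = 0$ and $\varprojlim = H_\bullet(\g)$, so that $c$ is a quasi-isomorphism. It is exactly here that nilpotency of $\g$ is indispensable: for the non-nilpotent free Lie algebra on a degree-zero vector space $V$ the same analysis instead yields $\hc\g \simeq \hat{\mathfrak{Lie}}(V)\neq\g$.

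The main obstacle is this last step in case (i): controlling the degree-zero generators $\g_0$. Already on the $E_1$-page one sees a full free Lie algebra $\mathfrak{Lie}(\g_0^{\mathrm{ab}})$ concentrated in homological degree $0$ with nonzero classes in every weight, and one must prove that the higher differentials --- which encode the bracket of $\g$ --- collapse these to the nilpotent correct answer after finitely many steps in each total degree. Equivalently, one must establish that the filtration induced on $H_\bullet(\g)$ is finite and that the truncation tower is Mittag--Leffler with the expected limit. I expect the finiteness of this collapse to follow from the boundedness of the lower central series of $\g$ together with the nonnegativity of the grading, but making the pro-vanishing of the artifact classes precise --- rather than merely plausible from the abelian model computation --- is the crux of the argument.
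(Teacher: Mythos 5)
Your case (ii) is correct and is essentially the paper's own argument: when $\g$ sits in strictly negative degrees, so do the generators $W$ of the quasi-free resolution, each bracket-length-$n$ component lies in degrees $\leq -n$, hence the lower central series filtration of $\QQ\g$ is bounded below in every homological degree and $\QQ\g$ is already complete. The preliminary observations (identification of the lower central series with bracket length, injectivity of $c$, $\Gr c$ an isomorphism) are also fine, and fine to dismiss as insufficient on their own.

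Case (i), however, contains a genuine gap, and you have located it yourself. The entire content of the theorem in this case is the assertion that the tower $\{H_\bullet(\QQ\g/L^n\QQ\g)\}_n$ is pro-isomorphic to the constant tower $H_\bullet(\g)$; your proposal offers only the \emph{expectation} that the top-weight ``artifact classes'' are pro-zero and that the induced weight filtration on $H_\bullet(\g)$ is finite. Nothing in the proposal forces this. On the $E_1$-page you correctly see the full free Lie algebra on $\g_0^{\mathrm{ab}}$ in homological degree $0$, and the claim that the higher differentials collapse it to a bounded range of weights is exactly equivalent to the statement being proved; asserting that nilpotency of $\g$ ``should'' make this happen is where the argument becomes circular. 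As written, the mechanism you describe does not distinguish the nilpotent case from the free Lie algebra on a degree-zero vector space, for which the conclusion fails.

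The paper avoids the tower of nilpotent quotients altogether. It introduces a second decreasing filtration $F$ on $\QQ\g$, induced by the lower central series of $\g$ itself (filter the tensor factors $\g^{\otimes\ell}$ in the explicit expansion of $\QQ\g$, giving the operadic part the trivial filtration), alongside the bracket-length filtration $G$. Two facts then do all the work. First, $\QQ\g\to\g$ is a \emph{filtered} quasi-isomorphism for $F$ --- proved by rerunning the standard Koszulness argument on each $\Gr_F^p$ --- so it induces a quasi-isomorphism on $F$-completions, and nilpotency of $\g$ says the $F$-completion of $\g$ is $\g$ itself. Second, under the hypotheses of (i) the filtrations $F$ and $G$ are \emph{commensurable}: nonnegativity of the grading ensures only finitely many tensor powers of $\g$ contribute to $\Gr_G^p$ in each homological degree, and nilpotency makes $F$ bounded below on each of them, while $G$ is automatically bounded below on $\Gr_F^p$. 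Commensurable exhaustive filtrations have isomorphic completions, so the $F$-completion of $\QQ\g$ coincides with the $G$-completion, which is $\hc\g$. Chaining these gives $\g\simeq\hc\g$. This commensurability device is precisely the substitute for the Mittag--Leffler statement your plan leaves unproved; to salvage your route you would need to establish that statement directly, which is not easier than the theorem itself.
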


\begin{para}
	Here, a dg Lie algebra is said to be \emph{nilpotent} if the lower central series is bounded below in each homological degree. In particular, any dg Lie algebra concentrated in positive degrees is nilpotent, so \cref{homotopy-complete} recovers the result of Harper--Hess mentioned in \S\ref{harper-hess-criterion}. \end{para}

\begin{para}
	Theorem \hyperref[thmA intro]{A} is trivial in case the algebras $A$ and $B$ have no differential, which is not the case for Theorem \hyperref[thm B]{B}. In fact, Theorem \hyperref[thm B]{B} is highly nontrivial even in the case where $\g$ and $\h$ are classical Lie algebras concentrated in degree $0$. It is particularly interesting when $\g$ and $\h$ are additionally assumed to be nilpotent, since in this case \cref{homotopy-complete} ensures that $\g$ and $\h$ are homotopy complete. In this case Theorem \hyperref[thm B]{B} says the following. 
\end{para}

\begin{cor}\label{classical Thm B}
	Let $\g$ and $\h$ be nilpotent Lie algebras over a field of characteristic zero. The universal enveloping algebras $U\g$ and $U\h$ are isomorphic as associative algebras if and only if $\g$ and $\h$ are isomorphic as Lie algebras.
\end{cor}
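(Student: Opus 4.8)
The ``if'' direction requires no work: $U$ is a functor, so an isomorphism $\g\cong\h$ of Lie algebras pushes forward to an isomorphism $U\g\cong U\h$ of associative algebras. For the ``only if'' direction the plan is to regard $\g$ and $\h$ as dg Lie algebras concentrated in homological degree $0$ and then chain together the two main inputs above: Theorem~\hyperref[thm B]{B} to pass from enveloping algebras to homotopy completions, and \cref{homotopy-complete} to discard the completion.

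First I would observe that an isomorphism $U\g\cong U\h$ of associative algebras is, in particular, a quasi-isomorphism of associative dg algebras (both sides being concentrated in degree $0$, a bijective chain map is automatically a homology isomorphism). Feeding this into Theorem~\hyperref[thm B]{B} yields that $\hc\g$ and $\hc\h$ are quasi-isomorphic as dg Lie algebras. Next, since a nilpotent Lie algebra concentrated in degree $0$ satisfies condition~(i) of \cref{homotopy-complete}, both $\g$ and $\h$ are homotopy complete, giving quasi-isomorphisms $\g\simeq\hc\g$ and $\h\simeq\hc\h$. Splicing these three quasi-isomorphisms together produces a zig-zag of dg Lie algebra quasi-isomorphisms linking $\g$ and $\h$.

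The remaining, and only genuinely separate, step is to promote this quasi-isomorphism to a strict isomorphism of Lie algebras. Here I would simply apply the homology functor: each map in the zig-zag is a morphism of dg Lie algebras inducing an isomorphism on homology, so their composite gives an isomorphism $H_\bullet(\g)\cong H_\bullet(\h)$ of graded Lie algebras. As $\g$ and $\h$ are concentrated in degree $0$ with zero differential, one has $H_\bullet(\g)=\g$ and $H_\bullet(\h)=\h$ as Lie algebras, and the isomorphism of homologies is exactly the desired isomorphism $\g\cong\h$. I expect no real obstacle inside the corollary itself: all the difficulty has been front-loaded into Theorem~\hyperref[thm B]{B} and \cref{homotopy-complete}, and the only point demanding care is the use of the degree-$0$ hypothesis, which is precisely what collapses the weak (homotopical) equivalence down to an honest isomorphism of Lie algebras.
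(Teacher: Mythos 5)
Your proposal is correct and is exactly the argument the paper intends (the corollary is stated immediately after the paragraph observing that \cref{homotopy-complete}(i) makes nilpotent degree-zero Lie algebras homotopy complete, so that Theorem \hyperref[thm B]{B} applies): pass from the isomorphism of enveloping algebras to a quasi-isomorphism, invoke Theorem \hyperref[thm B]{B} and homotopy completeness to get a zig-zag of dg Lie quasi-isomorphisms, and take homology to collapse it to an honest isomorphism since both sides are concentrated in degree $0$ with zero differential. No gaps.
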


\begin{rem}
	According to a result of Riley--Usefi \cite{rileyusefi} it is known that if $\g$ and $\h$ are Lie algebras such that $U\g \cong U\h$ as associative algebras, then $\g$ is nilpotent if and only if $\h$ is nilpotent. 
\end{rem}

\begin{para}
	\cref{classical Thm B} makes progress on the long-standing problem of whether a Lie algebra can be recovered from its universal enveloping algebra {(seen as an associative algebra only)} \cite[p.\ 187]{bergman}. Before this, the statement was known only for some special cases. For example, Schneider--Usefi \cite{schneiderusefi} have a computer-assisted proof of the claim for all nilpotent Lie algebras of dimension at most $6$. Over a field of positive characteristic it is possible for $U\g$ and $U\h$  to be isomorphic, even as Hopf algebras, without $\g$ and $\h$ being isomorphic. We refer the reader to the survey paper \cite{usefi} for more information.
\end{para}

\begin{para}
	The question of whether a Lie algebra can be recovered from its universal enveloping algebra is analogous to the more well-studied question of whether a discrete group can be recovered from its group algebra (considered as an associative algebra); the latter problem was famously settled by Hertweck's construction \cite{hertweck} of two non-isomorphic finite groups $G$ and $H$ such that $\Z G \cong \Z H$. For finite nilpotent groups $G$ and $H$ it \emph{is} true that $\Z G \cong \Z H$ implies $G \cong H$ \cite{roggenkampscott}, which suggests that Theorem \hyperref[thm B]{B} might be ``sharp'' in the sense that some (pro)nilpotence condition is necessary in order to recover a (dg) Lie algebra from its universal enveloping algebra. 
\end{para}

\begin{para}
	Theorem \hyperref[thm B]{B} gives an interesting example where generalizing a problem makes it easier. Theorem \hyperref[thm B]{B} is a significantly stronger result than its special case, \cref{classical Thm B}. One could ask whether our methods could be simplified if we only wanted to give a proof of \cref{classical Thm B}, so that we could give a more direct argument in this special case. We do not believe that this is possible. Indeed, the very first step of our argument is to pass to the Koszul dual setting by applying the bar construction (also known as the functor of Chevalley--Eilenberg chains), so that even if we start with a classical (non-dg) Lie algebra, we immediately obtain something differential graded. The Koszul duality which is crucial for our arguments only makes sense in the differential graded world.
\end{para}

\begin{para}
	Theorem \hyperref[thmA intro]{A} gives a positive answer to a folklore problem in rational homotopy theory. Let $C^\ast(X,\Z)$ denote the singular cochains of a topological space $X$. It is well known that $C^\ast(X,\Z)$ is an associative dg algebra which is not commutative in general; the best one can say is that it admits the structure of an $\mathsf E_\infty$-algebra (an algebra which is commutative up to coherent higher homotopy). This $\mathsf E_\infty$-algebra structure is not in general quasi-isomorphic to a strictly commutative multiplication, as one can see from the non-triviality of cohomology operations like the Steenrod squares. \emph{Rationally}, however, every $\mathsf E_\infty$-algebra is quasi-isomorphic to a strictly commutative dg algebra, and Sullivan \cite{sullivaninfinitesimal} constructed a functor $A_{\mathrm{PL}}$ from spaces to commutative dg algebras over $\rats$ such that $C^\ast(X,\rats)$ is naturally quasi-isomorphic to $A_{\mathrm{PL}}(X)$. Sullivan also showed that if $X$ and $Y$ are nilpotent spaces of finite type, then $X$ and $Y$ have the same rational homotopy type if and only if $A_{\mathrm{PL}}(X)$ and $A_{\mathrm{PL}}(Y)$ are quasi-isomorphic as \emph{commutative} dg algebras. It is then natural to ask whether one can detect the rational homotopy type of $X$ using only the dg algebra $C^\ast(X,\rats)$, i.e.\ without invoking the functor $A_{\mathrm{PL}}$ --- a priori, an associative quasi-isomorphism between $C^\ast(X,\rats )$ and $C^\ast(Y,\rats)$ does not imply the existence of a commutative quasi-isomorphism between $A_{\mathrm{PL}}(X)$ and $A_{\mathrm{PL}}(Y)$. Theorem \hyperref[thmA intro]{A} gives a positive answer to the question.
\end{para}

\begin{cor}
	Let $X$ and $Y$ be connected, nilpotent, based spaces of finite $\rats$-type. Then $X$ and $Y$ are rationally homotopy equivalent if and only if the cochain algebras $C^\ast(X,\rats)$ and $C^\ast(Y,\rats)$ are quasi-isomorphic. 
\end{cor}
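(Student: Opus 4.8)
The plan is to deduce the corollary from Theorem~\hyperref[thmA intro]{A} together with two classical inputs from rational homotopy theory. The first is Sullivan's theorem \cite{sullivaninfinitesimal}: for connected nilpotent spaces of finite $\rats$-type, $X$ and $Y$ are rationally homotopy equivalent if and only if the commutative dg algebras $A_{\mathrm{PL}}(X)$ and $A_{\mathrm{PL}}(Y)$ are quasi-isomorphic \emph{as commutative dg algebras}. The second is the comparison between singular cochains and polynomial forms: there is a natural zig-zag
\[
C^\ast(X,\rats)\stackrel{\sim}{\longrightarrow}\cdots\stackrel{\sim}{\longleftarrow}A_{\mathrm{PL}}(X)
\]
of quasi-isomorphisms of \emph{associative} dg algebras, valid for every space. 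Granting these, the corollary reduces to a short formal argument.

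For the forward implication I would argue as follows. If $X$ and $Y$ are rationally homotopy equivalent, then by Sullivan's theorem $A_{\mathrm{PL}}(X)$ and $A_{\mathrm{PL}}(Y)$ are quasi-isomorphic as commutative, hence a fortiori as associative, dg algebras; concatenating with the comparison zig-zags at both ends yields an associative quasi-isomorphism between $C^\ast(X,\rats)$ and $C^\ast(Y,\rats)$. (Note that this direction does not use Theorem~\hyperref[thmA intro]{A}.) Conversely, suppose $C^\ast(X,\rats)$ and $C^\ast(Y,\rats)$ are quasi-isomorphic as associative dg algebras. Using the comparison zig-zags again, $A_{\mathrm{PL}}(X)$ and $A_{\mathrm{PL}}(Y)$ are then quasi-isomorphic as associative dg algebras. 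Since both are commutative dg algebras over the characteristic-zero field $\rats$, Theorem~\hyperref[thmA intro]{A} upgrades this to a quasi-isomorphism of commutative dg algebras, and Sullivan's theorem then gives that $X$ and $Y$ are rationally homotopy equivalent.

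The only nontrivial point is the associative comparison between $C^\ast(X,\rats)$ and $A_{\mathrm{PL}}(X)$, and this is where I expect the main obstacle to lie. The naive candidate is Sullivan's integration map $\int\colon A_{\mathrm{PL}}(X)\to C^\ast(X,\rats)$, which is a natural quasi-isomorphism of cochain complexes but is \emph{not} multiplicative, so it cannot be inserted directly into the zig-zag definition of associative quasi-isomorphism. To circumvent this I would invoke the $\Eoo$-algebra structure on cochains: both $A_{\mathrm{PL}}(X)$ and $C^\ast(X,\rats)$ are naturally $\Eoo$-algebras (the former even commutative), and the standard comparison results provide a natural zig-zag of $\Eoo$-quasi-isomorphisms between them. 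Forgetting the $\Eoo$-structure down to the underlying $\Aoo$-structure and then rectifying --- using that over a field of characteristic zero $\Aoo$-algebras and their quasi-isomorphisms are modelled by genuine associative dg algebras and strict zig-zags --- produces the required associative comparison. Alternatively one can work throughout with a strictly multiplicative simplicial (Thom--Whitney) cochain model interpolating between the two, thereby avoiding rectification altogether.
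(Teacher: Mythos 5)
Your argument is correct and is exactly the deduction the paper intends (the corollary is stated without proof as an immediate consequence of Theorem~\hyperref[thmA intro]{A}, Sullivan's theorem, and the natural comparison between $C^\ast(X,\rats)$ and $A_{\mathrm{PL}}(X)$). The worry you raise about multiplicativity is handled classically without any $\Eoo$-rectification: F\'elix--Halperin--Thomas (Rational Homotopy Theory, Ch.~10) give a natural zig-zag $A_{\mathrm{PL}}(X)\to C^\ast(X;A_{\mathrm{PL}})\leftarrow C^\ast(X,\rats)$ of strict quasi-isomorphisms of associative dg algebras, which is your ``Thom--Whitney'' alternative and is all that is needed.
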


\begin{para}
	Theorem \hyperref[thm B]{B} also admits an immediate interpretation in rational homotopy theory, via Quillen's approach to rational homotopy theory using dg Lie algebras \cite{quillenrationalhomotopytheory} (which in fact predates Sullivan's work). Quillen constructed a functor $\lambda$ from based simply connected spaces to dg Lie algebras over $\rats$ such that $X$ and $Y$ are rationally homotopy equivalent if and only if $\lambda X$ and $\lambda Y$ are quasi-isomorphic, and such that there is a quasi-isomorphism of associative dg algebras between $U(\lambda X)$ and $C_\ast(\Omega X,\rats)$. The multiplication on $C_\ast(\Omega X,\rats)$ is given by the Pontryagin product, i.e.\ the product coming from the concatenation of loops, and we take $\Omega X$ to be the Moore loop space of the based space $X$, which has a strictly associative multiplication. Moreover, the dg Lie algebra $\lambda X$ is concentrated in positive degrees if $X$ is simply connected, so it is in particular homotopy complete. Theorem \hyperref[thm B]{B} implies the following statement in this context.
\end{para}

\begin{cor}\label{rht-cor}
	Let $X$ and $Y$ be simply connected based spaces. Then $X$ and $Y$ are rationally homotopy equivalent if and only if the algebras $C_\ast(\Omega X,\rats)$ and $C_\ast (\Omega Y,\rats)$ of chains on their Moore loop spaces are quasi-isomorphic as associative dg algebras. 
\end{cor}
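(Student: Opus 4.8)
The plan is to deduce this corollary formally from Theorem \hyperref[thm B]{B}, together with Quillen's theorem and the homotopy completeness of $\lambda X$ for simply connected $X$. The inputs recalled in the preceding paragraph are that $X$ and $Y$ are rationally homotopy equivalent if and only if $\lambda X$ and $\lambda Y$ are quasi-isomorphic as dg Lie algebras, and that there are quasi-isomorphisms of associative dg algebras $U(\lambda X)\simeq C_\ast(\Omega X,\rats)$ and $U(\lambda Y)\simeq C_\ast(\Omega Y,\rats)$. Since $X$ and $Y$ are simply connected, the dg Lie algebras $\lambda X$ and $\lambda Y$ are concentrated in strictly positive homological degrees; in particular they are nilpotent and concentrated in nonnegative degree, so by \cref{homotopy-complete}\,(i) they are homotopy complete, that is, $\lambda X\simeq \hc{(\lambda X)}$ and $\lambda Y\simeq \hc{(\lambda Y)}$ as dg Lie algebras.

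For the forward implication I would use that the universal enveloping functor $U$ sends quasi-isomorphisms of dg Lie algebras to quasi-isomorphisms of associative dg algebras. This is standard in characteristic zero: the Poincar\'e--Birkhoff--Witt filtration on $U\g$ has associated graded the symmetric algebra $\sym(\g)$, which depends functorially on $\g$ and preserves quasi-isomorphisms (as $\sym^n$ is a direct summand of the $n$-th tensor power), and comparing the induced spectral sequences shows that a quasi-isomorphism $\g\to\h$ induces one on enveloping algebras. Thus if $X$ and $Y$ are rationally homotopy equivalent, a zig-zag of quasi-isomorphisms between $\lambda X$ and $\lambda Y$ yields a zig-zag of associative quasi-isomorphisms between $U(\lambda X)$ and $U(\lambda Y)$, and composing with the comparison quasi-isomorphisms above gives the desired associative quasi-isomorphism between $C_\ast(\Omega X,\rats)$ and $C_\ast(\Omega Y,\rats)$.

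For the reverse implication --- the substantive direction --- suppose $C_\ast(\Omega X,\rats)$ and $C_\ast(\Omega Y,\rats)$ are quasi-isomorphic as associative dg algebras. Composing with the comparison quasi-isomorphisms, we obtain that $U(\lambda X)$ and $U(\lambda Y)$ are quasi-isomorphic as associative dg algebras. Theorem \hyperref[thm B]{B} then yields a quasi-isomorphism $\hc{(\lambda X)}\simeq \hc{(\lambda Y)}$ of dg Lie algebras. Finally, invoking the homotopy completeness established in the first paragraph, we replace the homotopy completions by $\lambda X$ and $\lambda Y$ themselves, concluding that $\lambda X$ and $\lambda Y$ are quasi-isomorphic as dg Lie algebras, whence $X$ and $Y$ are rationally homotopy equivalent by Quillen's theorem.

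All of the genuine difficulty lives in Theorem \hyperref[thm B]{B} and in \cref{homotopy-complete}; granting these, the corollary is a matter of transporting the hypotheses across Quillen's comparison quasi-isomorphisms. The one checkpoint requiring care is the homotopy completeness of $\lambda X$, which is precisely what allows one to pass from the homotopy completions produced by Theorem \hyperref[thm B]{B} back to the original Lie models. This is where simple connectivity is used, and without it (or some replacement hypothesis) this descent step, rather than the enveloping-algebra comparison, would be the obstacle.
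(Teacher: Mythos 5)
Your proposal is correct and follows exactly the route the paper intends: the corollary is deduced from Theorem \hyperref[thm B]{B} together with Quillen's comparison quasi-isomorphism $U(\lambda X)\simeq C_\ast(\Omega X,\rats)$, the fact that $U$ preserves quasi-isomorphisms (\cref{lemma:U preserves qi}) for the easy direction, and the homotopy completeness of the positively graded Lie algebra $\lambda X$ (\cref{homotopy-complete}) to descend from homotopy completions back to $\lambda X$ and $\lambda Y$. The paper leaves this assembly implicit in the paragraph preceding the corollary, and your write-up fills it in faithfully.
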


\begin{rem}Quillen's original results for simply connected spaces were later extended also to finite type connected nilpotent spaces \cite{neisendorfer}, and the analogue of \cref{rht-cor} remains true if instead of assuming $X$ and $Y$ to be simply connected we suppose they are finite type connected nilpotent. 
\end{rem}

\begin{para}\label{saleh}
	In a paper that had a very strong influence on the present project, Saleh \cite{saleh} proved that a commutative dg algebra is formal as a dg algebra if and only if it is formal as a commutative dg algebra, and a dg Lie algebra is formal if and only if its universal enveloping algebra is formal as a dg associative algebra. These are special cases of our results, since formality says precisely that an algebra and its homology are quasi-isomorphic. However, one should note that Saleh's paper does not require any nilpotence or connectivity assumptions on the dg Lie algebras, so in this respect his result is stronger than ours. The starting point of the present paper was an attempt to see how far the arguments of Saleh could be pushed. Further prior work in the direction of Theorem Theorem \hyperref[thmA intro]{A} can be found in an answer \cite{mathoverflow} by Tyler Lawson to a question on MathOverflow, explaining an argument as to why Theorem \hyperref[thmA intro]{A} holds in the non-negatively graded case. 
\end{para}

\begin{para}
	In a sense, our Theorem \hyperref[thmA intro]{A} is about probing how the forgetful functor from the homotopy category of commutative dg algebras to the homotopy category of all dg algebras fails to be \emph{fully faithful} --- as opposed to the classical (non-dg) forgetful functor from commutative rings to noncommutative rings, which is fully faithful. Our Theorem \hyperref[thmA intro]{A} shows that in characteristic zero, some shadow of fully faithfulness is mysteriously restored, in that the functor is injective on isomorphism classes. In a sequel to this paper \cite{followup} we show by similar methods that the same functor is in addition \emph{faithful}, also in characteristic zero, extending prior work of Amrani \cite{amrani}. However, the forgetful functor is certainly not full \cite[\S1.4]{followup}.
	\end{para}

\begin{para}
	In this paper we systematically use the language of operads, operadic algebras, and the Koszul duality theory of operads; the results are obtained by studying the interplay between the operads $\mathsf{Lie}$, $\mathsf{Ass}$ and $\mathsf{Com}$. In fact, the only property of these operads that we end up using (besides their Koszulness) is that the natural morphism $\mathsf{Lie} \to \mathsf{Ass}$ admits a left inverse in the category of infinitesimal bimodules over the operad $\mathsf{Lie}$. One obtains versions of Theorems \hyperref[thmA intro]{A} and \hyperref[thm B]{B} for any morphism of Koszul operads $\P \to \Q$ which is a split injection of infinitesimal $\P$-bimodules:
\end{para}

\begin{thm}\label{generalization}
	Let $f \colon \P \to \Q$ be a morphism of binary Koszul operads in characteristic zero with $\P(n)$ and $\Q(n)$ finite dimensional for all $n$. Let $\Q^! \to \P^!$ be the induced morphism between the Koszul dual operads. Suppose that there exists a morphism of infinitesimal $\P$-bimodules $s \colon \Q \to \P$ such that $s \circ f = \mathrm{id}_\P$. Then: 
	\begin{enumerate}[(i)]
		\item Two dg $\P^!$-algebras $A$ and $B$ are quasi-isomorphic if and only if they are quasi-isomorphic as $\Q^!$-algebras. 
		\item Let $A$ and $B$ be dg $\P$-algebras. If their derived operadic pushforwards $\mathbf Lf_!A$ and $\mathbf Lf_!B$ are quasi-isomorphic as dg $\Q$-algebras, then the homotopy completions $\hc A$ and $\hc {B}$ are quasi-isomorphic.
	\end{enumerate}
\end{thm}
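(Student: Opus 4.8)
The plan is to prove both parts by transporting the comparison across operadic Koszul duality and exploiting the splitting $s$ at the level of bar constructions. The two statements are the two faces of a single mechanism, since restriction of algebras along $\Q^!\to\P^!$ in part (i) and the operadic pushforward $\mathbf{L}f_!$ along $f$ in part (ii) are interchanged by Koszul duality, just as the forgetful functor from $\com$-algebras to $\ass$-algebras is interchanged with the universal enveloping functor from $\lie$-algebras to $\ass$-algebras. The two homotopical inputs I would use throughout are standard in characteristic zero: first, that two dg algebras over a Koszul operad are quasi-isomorphic if and only if there is an $\infty$-quasi-isomorphism between them; and second, that an $\infty$-morphism of algebras over a Koszul operad is the same datum as a morphism of conilpotent coalgebras (over the Koszul dual cooperad) between their bar constructions, with the linear part recorded by the induced map of cogenerators.

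Next I would dualize $f$ to the cooperadic side. The operad map $f$ induces compatible morphisms of Koszul dual cooperads, and the task is to compare, in part (i), the $\Q^!$-bar construction of the restriction of $A$ with its $\P^!$-bar construction, and in part (ii), the $\Q$-bar construction of $\mathbf{L}f_!A$ with the $\P$-bar construction of $A$, through the corestriction functors that the dualized $f$ defines. Crucially this comparison is not an isomorphism on the nose: corestricting the associative-type bar construction produces a larger coalgebra, whereas the $\P^!$- or $\P$-side bar construction is a smaller one (the Koszul dual piece) sitting inside it. The content of the reduction is therefore to realise the latter as a natural \emph{homotopy retract} of the former.

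This is exactly what the hypothesis delivers, and the only place it is used. Since $s\colon\Q\to\P$ is a morphism of infinitesimal $\P$-bimodules but not of operads, it induces neither a functor on algebras nor a corestriction on coalgebras; however the bar differential is built solely from the (co)bimodule structure, so $s$ still induces natural maps of the underlying bar complexes, and the identity $s\circ f=\mathrm{id}_\P$ exhibits the $\P$-side bar construction as a natural retract of the $\Q$-side one. Transporting the given $\infty$-quasi-isomorphism through this retraction then yields an $\infty$-morphism on the $\P$-side. In part (i) restriction does not change the underlying objects, so the retraction is the identity on cogenerators and the transported $\infty$-morphism has the same invertible linear part; it is therefore an $\infty$-quasi-isomorphism, giving a genuine quasi-isomorphism of $\P^!$-algebras, and the easy converse follows because restriction carries any $\P^!$-quasi-isomorphism zig-zag to a $\Q^!$-one. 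In part (ii), by contrast, $\mathbf{L}f_!$ enlarges the object, and extracting the $\P$-side cogenerators from the pushforward computes, at the derived level, the homotopy completion rather than the algebra itself; the transported $\infty$-morphism therefore connects $\hc{A}$ and $\hc{B}$, which is precisely the conclusion of Theorem \hyperref[thm B]{B}.

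I expect the main obstacle to be the construction and homotopical control of the $s$-retraction of bar constructions: one must check that the chain-level maps produced by the infinitesimal-bimodule splitting assemble into morphisms (and homotopies) of coalgebras compatible with the bar differentials, so that transporting an $\infty$-morphism again produces an $\infty$-morphism. A closely related difficulty, responsible for the asymmetry between the two parts, is to identify exactly what the derived $\P$-side cogenerators of a pushforward compute, and to recognise this as the homotopy completion; this is the concrete manifestation in our setting of the fact that Koszul duality is an equivalence only after completion, and it is what forces the completeness hypothesis in part (ii) while leaving part (i) an unconditional equivalence. Secondary bookkeeping --- the suspensions in the Koszul dual cooperads, and the finite-dimensionality and characteristic-zero hypotheses ensuring $(\P^!)^\antishriek$ is the linear dual of $\P$ and that the transfer statements hold --- must also be carried through.
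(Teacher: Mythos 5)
Your framing of the theorem --- the Koszul-dual relationship between parts (i) and (ii), and the identification of the splitting $s$ as the sole non-formal input --- matches the paper's, but the central step of your argument has a genuine gap. You propose to transport the given $\Q^!$-side $\infty$-quasi-isomorphism through the retraction of bar constructions induced by $s$ and assert that the result is again an $\infty$-morphism, now on the $\P$-side. This is false as stated: $s$ is a morphism of infinitesimal $\P$-bimodules but \emph{not} of operads, so the maps it induces between bar constructions are maps of filtered chain complexes (indeed of modules over the relevant convolution Lie algebra) but not morphisms of coalgebras, and they do not intertwine the bar codifferentials attached to the two algebra structures. Concretely, in the language the paper uses: the two $\P^!_\infty$-structures are Maurer--Cartan elements $x,y$ of $\h=\overline{\hom}_{\mathbb S}\bigl((\P^!)^{\antishriek},\End_V\bigr)$, the $\Q^!_\infty$-isotopy is a gauge $a$ in the larger convolution algebra $\g=\overline{\hom}_{\mathbb S}\bigl((\Q^!)^{\antishriek},\End_V\bigr)$ with $\exp(a)\cdot x=y$, and applying $s$ gives an element $s(a)\in\h$ for which $\exp(s(a))\cdot x$ need \emph{not} equal $y$; it only agrees with $y$ to one higher order of the arity filtration. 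The entire content of the paper's \cref{section1} (\cref{thm:gauges to 0} and \cref{retractionthm}) is to repair exactly this failure by an infinite iteration $a_{n+1}=\mathrm{BCH}(a_n,-s(a_n))$, $x_{n+1}=\exp(s(a_n))\cdot x_n$, whose convergence uses completeness of the filtration and the fact that $s$ is a retraction of filtered $\h$-\emph{modules} rather than merely of complexes. You correctly flag the assembly of the $s$-maps into coalgebra morphisms as the main obstacle, but the resolution is not that they so assemble --- they do not --- and without the iterative correction (or an equivalent device) the proof does not close.

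A secondary gap concerns part (ii). Beyond the transport step, the argument needs: the identification $(\CobarAC C)^+\cong U\CobarLC C$ generalized to $f$, together with the augmentation argument, to convert a quasi-isomorphism of $\mathbf Lf_!A$ and $\mathbf Lf_!B$ into one of the $\Q$-side bar constructions of $A$ and $B$; a dual form of (i) for $\infty$-\emph{coalgebras}, where the non-locally-finite formalism is required; and the \emph{completed} cobar functor, which is what actually preserves quasi-isomorphisms, accepts arbitrary $\infty$-morphisms of coalgebras, and whose output is by definition the homotopy completion. Your remark that Koszul duality is an equivalence only after completion points in the right direction, but the specific mechanism producing $\hc A\simeq\hc B$ is the completed cobar construction, and its quasi-isomorphism invariance has to be proved, not assumed.
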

	
\begin{para}
	By specializing \cref{generalization} to the case $\P=\lie$ and $\Q=\ass$, one recovers (more or less) Theorems \hyperref[thmA intro]{A} and \hyperref[thm B]{B}. Two technical remarks are in order:
	\begin{itemize}
		\item \cref{generalization} only considers binary quadratic operads. In particular, the corresponding algebras do not have units. In the body of the paper we prove versions of Theorems \hyperref[thmA intro]{A} and \hyperref[thm B]{B} that apply to unital algebras as well. The additional complications arising from the presence of units are treated by ad hoc arguments (\S \ref{lurie1}--\S\ref{lurie2} and \S\ref{different augmentations}--\S\ref{lemma:UL qi as unital algebras then also as augmented}) which do not apply to the case of general operadic algebras.
		\item In the statement of Theorem \hyperref[thm B]{B} we considered the usual universal enveloping algebra functor, but \cref{generalization}(ii) considers the derived version of the universal enveloping algebra \cite[\S 4.6]{hinichmodelstructure}. Nevertheless, \cref{generalization}(ii) specializes to Theorem \hyperref[thm B]{B}: the universal enveloping algebra functor always preserves quasi-isomorphisms (\cref{lemma:U preserves qi}), so in this case we have $f_!A \simeq f_!A'$ if and only if $\mathbf{L}f_!A \simeq \mathbf Lf_!A'$.
	\end{itemize}
	The proof of \cref{generalization} is a modification of the arguments given in the body of the paper; no part of it should be difficult for the reader comfortable with the necessary operadic formalism. We leave the details to the interested reader.
\end{para}
	
\begin{para}
	We know of one further example to which the general \cref{generalization} applies. By \cite[Section 6.2]{griffincomodules}, the morphism $\mathsf{Leib} \to \mathsf{Diass}$ from the Leibniz operad to the diassociative operad admits a left inverse as infinitesimal bimodule. It follows that two dg Zinbiel algebras are quasi-isomorphic if and only if they are quasi-isomorphic as dendriform algebras (the analogue of Theorem \hyperref[thmA intro]{A}), and two dg Leibniz algebras whose universal enveloping diassociative algebras are quasi-isomorphic must have quasi-isomorphic homotopy completions (the analogue of Theorem \hyperref[thm B]{B}).
\end{para}

\subsec{Outline of the arguments and structure of the paper}

\begin{para}
	The Koszul duality between Theorems \hyperref[thmA intro]{A} and \hyperref[thm B]{B} is clearly visible in the structure of the proofs. We will prove Theorem \hyperref[thmA intro]{A} by a direct argument, and Theorem \hyperref[thm B]{B} by dualizing to reduce to Theorem \hyperref[thmA intro]{A}, or rather to a statement very close to it.
\end{para}

\begin{para}\label{summary-1}
	Let us briefly summarize the proofs, focusing first on Theorem \hyperref[thmA intro]{A}. We will need to work with $\Aoo$-algebras rather than associative algebras, and similarly we will have to replace commutative algebras with $\Coo$-algebras (which are sometimes called ``commutative $\Aoo$-algebras'' in the older literature). The statement we will actually prove is that if two $\Coo$-algebras are $\Aoo$-quasi-isomorphic, then they are also $\Coo$-{quasi-}isomorphic. We will represent our two $\Coo$-algebra structures by two Maurer--Cartan elements of a certain dg Lie algebra $\mathfrak h$, called the \emph{deformation complex} of $\Coo$-algebra structures. Two Maurer--Cartan elements of the deformation complex are gauge equivalent if and only if the two $\Coo$-algebra structures are $\Coo$-quasi-isomorphic (in fact, $\Coo$-isotopic). The fact that they are $\Aoo$-quasi-isomorphic translates into the assertion that these two Maurer--Cartan elements are gauge equivalent in a larger dg Lie algebra $\mathfrak g$, which is the deformation complex of $\Aoo$-algebra structures. These dg Lie algebras are essentially the Harrison and Hochschild cochain complexes, respectively. One can now ask the following rather general question: consider complete filtered dg Lie algebras $\mathfrak h \subset \mathfrak g$, and suppose we are given two Maurer--Cartan elements in $\mathfrak h$ which are gauge equivalent in $\mathfrak g$. When are they also gauge equivalent in $\mathfrak h$? 
\end{para}
	
\begin{para}\label{summary-2}
	In \cref{section1}, we will give an answer to this more general question: this holds whenever there exists a filtered retraction of $\mathfrak g$ onto $\mathfrak h$ as an $\mathfrak h$-module. Thus, our goal becomes to construct a retraction of the Hochschild cochains onto the Harrison cochains. The existence of such a retraction goes back to Barr, but we will give a slightly different proof of this fact. In \cref{section2}, we observe that there is a retraction of the operad $\ass$ onto the operad $\lie$ as an infinitesimal bimodule over the operad $\lie$, as a consequence of the Poincar\'e--Birkhoff--Witt theorem. This implies in particular the existence of a filtered retraction of the Hochschild cochains onto the Harrison cochains. In \cref{sect:thma}, we put these ingredients together to prove Theorem \hyperref[thmA intro]{A}. 
\end{para}	
	
\begin{para}
	For the proof of Theorem \hyperref[thm B]{B} we will use Quillen's bar-cobar adjunction $\BarLC \vdash \CobarLC$ between dg Lie algebras and cocommutative dg coalgebras. It is well known that for any dg Lie algebra $\g$ there is a quasi-isomorphism of coassociative dg coalgebras between $\BarLC \g$ and $\BarAC U\g$, where $\BarAC$ denotes the classical bar construction of associative dg algebras. It follows that if $U\g$ and $U\h$ are quasi-isomorphic, then so are $\BarAC U\g$ and $\BarAC U\h$, which implies that $\BarLC \g$ and $\BarLC \h$ are quasi-isomorphic as \emph{coassociative} dg coalgebras, so that there is an $\Aoo$-quasi-isomorphism $\BarLC \g \rightsquigarrow \BarLC \h$. We will then prove a dual form of Theorem \hyperref[thmA intro]{A}, showing that there is in fact a $\Coo$-quasi-isomorphism $\BarLC \g \rightsquigarrow \BarLC \h$. At this point one might hope to apply the cobar functor to deduce that $\g \simeq \h$. Unfortunately this is problematic, for two reasons:
	\begin{itemize}
		\item The cobar functor $\CobarLC$ does not preserve quasi-isomorphisms in general.
		\item In general, a $\Coo$-morphism between cocommutative dg coalgebras does not induce a morphism between their cobar constructions, unless some finiteness conditions are imposed.
	\end{itemize}
	It turns out that both problems are solved by replacing $\CobarLC$ with the \emph{completed} cobar functor $\CobarLC^\wedge$, which does preserve quasi-isomorphisms and which is functorial for arbitrary $\Coo$-morphisms, cf.~\cref{sect:background}. It follows that if $\g$ and $\h$ are quasi-isomorphic then the completions of the bar-cobar resolutions of $\g$ and $\h$ are quasi-isomorphic, which means precisely that $\hc \g \simeq \hc \h$. In \cref{sect:thmb}, we will explain the proof of Theorem \hyperref[thm B]{B}, and in \cref{sect:homotopy complete}, we will prove \cref{homotopy-complete}, giving criteria for when a dg Lie algebra is homotopy complete. Finally, in \cref{sect:background}, we will briefly recall some background on $\infty$-coalgebras. 
\end{para}

\begin{para}
	The reader who wants to get the gist of the proofs of Theorems \hyperref[thmA intro]{A} and \hyperref[thm B]{B} with a minimum of fuss about operadic preliminaries is invited to read only the statements of \cref{retractionthm} and \cref{pbwcor}, and then proceed to \cref{sect:thma,sect:thmb}.
\end{para}

\subsec{Notation and conventions}

\begin{para}
	We always work over a field $\K$ of characteristic $0$ and in the category of chain complexes. In other words, we use homological conventions and differentials have degree $-1$. We use conventions such that the dual of a chain complex is again a chain complex. The Harrison and Hochschild cochain complexes will play a supporting role in the paper; when they are mentioned they will be considered as chain complexes via the usual convention that $C^n = C_{-n}$, and so on. All algebras and coalgebras are in chain complexes unless explicitly specified otherwise, and we often omit the adjective dg, writing e.g.\ associative algebras when speaking of associative differential graded algebras. We implicitly identify invariants and coinvariants whenever necessary.
\end{para}

\begin{para}
	We consistently apply the Koszul sign rule: the category of chain complexes is symmetric monoidal with $V\otimes W\cong W\otimes V$ given by sending $v\otimes w$ to $(-1)^{\vert v\vert \vert w\vert }w\otimes v$. We denote by $s$ a formal element of degree $1$ and write $sV\coloneqq \K s\otimes V$ for the suspension of a chain complex $V$. The dual of $s$ is denoted by $s^{-1}$, so that $s^{-1}s = 1 = -ss^{-1}$.
\end{para}

\begin{para}\label{conventions}
	We try to follow the notations of \cite{lodayvallette} as closely as possible when talking about operads. All cooperads are conilpotent. Unless explicitly specified otherwise, associative and commutative algebras are non-unital, and coassociative and cocommutative coalgebras are non-counital.
\end{para}

\subsec{Acknowledgements}

\begin{para}
	The present paper grew out of a series of discussions in which we tried to understand what is the ``right'' setting to understand the results of Saleh's paper \cite{saleh}. We are grateful to Bashar Saleh, who initially participated in this project, for his role in these discussions. We are also grateful to Thomas Willwacher for helping us find a gap in the original version of the paper, and to Johan Alm, Alexander Berglund, Vladimir Dotsenko, and Bruno Vallette for mathematical comments. We thank an anonymous referee for pointing us to \cite[Corollary 5.4.4.7]{HA}. 
\end{para}

\begin{para}
	Ricardo Campos  was affiliated to Universit\'e Paris 13 when the current project begun and acknowledges support by the Swiss National Science Foundation Early Postdoc.Mobility grant P2EZP2\_174718. Dan Petersen was funded by ERC-2017-STG 759082 and by a Wallenberg Academy Fellowship. Daniel Robert-Nicoud was affiliated to Universit\'e Paris 13 when the current project begun and acknowledges gratefully the support of grants from the R\'egion Ile de France and the grant ANR-14-CE25-0008-01 project SAT. Felix Wierstra was partially supported by grant number GA CR P201/12/G028 from the Czech research council and grant number VI.Veni.202.046 from the Dutch Research Organisation (NWO), he further thanks the Max Planck Institute for Mathematics for their hospitality and excellent working conditions.
\end{para}

\section{Some deformation theory} \label{section1}

\begin{para}
	A famous principle, due to Deligne and Drinfeld and developed by many others, assigns to a dg Lie algebra a ``deformation problem'', in which the solutions to the deformation problem are Maurer--Cartan elements and deformation equivalence of solutions is defined by the action of the group obtained by exponentiating the degree $0$ elements. Any deformation problem in characteristic zero arises in this way, according to an informal principle which is now a theorem of Pridham--Lurie \cite{pridham,lurieICM}. We will only require a tiny fragment of the general theory, which we recall below; for an introductory textbook account see e.g.\ \cite{manettilectures}. However, it is worth pointing out that our set-up is quite different to the one considered in the references mentioned above: instead of considering functors of Artin rings, our dg Lie algebras have complete filtrations which make the required power series converge. So strictly speaking we will never write down an actual deformation functor.
\end{para}

\begin{para}\label{complete dgla}
	Let $\g$ be a dg Lie algebra equipped with a complete Hausdorff descending filtration
	\[
	\g = \F^1\g \supseteq \F^2\g \supseteq \cdots
	\]
	such that $d(\F^p \g) \subseteq \F^p \g$ and $[\F^p\g,\F^q\g] \subseteq \F^{p+q}\g$. The set of degree $0$ elements $\g_0$ can be made into a group, called the \emph{gauge group} of $\g$, using the Baker--Campbell--Hausdorff formula
	\[
	\mathrm{BCH}(a,b) = a+b+\frac{1}{2}[a,b] + \cdots
	\]
	in which the higher order terms are given by higher order nested brackets of $a$ and $b$. Given $a \in \g_0$ we write $\exp(a)$ for the corresponding group element. The series converges, since $\g = \F^1\g$ and the filtration is complete. The only fact about the Baker--Campbell--Hausdorff formula we will need in this article is that if $a \in\F^n\g_0$ and $b \in \g_0$ then
	\[
	\mathrm{BCH}(a,b) \equiv a+b \pmod {\F^{n+1}\g}.
	\]
\end{para}

\begin{para}\label{MC-def}
	Let $\MC(\g)$ be the set of solutions to the \emph{Maurer--Cartan equation} 
	\[
	dx + \frac{1}{2}[x,x] = 0
	\]
	in $\g_{-1}$. If $x \in \MC(\g)$, we may define a ``twisted'' differential $d_x$ on $\g$ by $d_x = [x,-]+d$. Then $(\g,[-,-],d_x)$ is again a complete filtered dg Lie algebra with the same underlying filtration, which we denote by $\g^x$. Then $y \in \g$ is a Maurer--Cartan element if and only if $y-x$ is a Maurer--Cartan element in $\g^x$.
\end{para}

\begin{para}
	The gauge group acts on $\MC(\g)$ by
	\[
	\exp(a)\cdot x = x - \sum_{n \geq 0}\frac{([a,-])^n}{(n+1)!} d_x(a).
	\]
	Two Maurer--Cartan elements are said to be \emph{gauge equivalent} if they differ by an element of $\g_0$ in this way. The only fact about the gauge action we will need here is that if $da$ and $x$ are in $\F^n\g_{-1}$ then
	\[
	\exp(a) \cdot x \equiv x - da \pmod {\F^{n+1}\g}.
	\]
\end{para}

\begin{para}
	The main result of the present section is the following one.
\end{para}

\begin{prop}\label{thm:gauges to 0}
	Let $\h \subseteq \g$ be a Lie subalgebra. Suppose that $\h$ is a retract of $\g$ as a filtered complex, meaning that there is a filtration-preserving chain map $s \colon \g \to \h$ whose restriction to $\h$ is the identity. Let $x \in \mathrm{MC}(\h)$, and suppose there is a gauge equivalence between $x$ and $0$ given by an element $a\in \g_0$.  Then $x$ is also gauge equivalent to $0$ in $\h$.
\end{prop}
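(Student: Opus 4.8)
The plan is to construct the desired gauge inside $\h$ by successive approximation along the filtration, using the retraction $s$ to pull primitives from $\g$ back into $\h$ at each stage. Concretely, I would build a sequence $a_n \in \h_0$, starting from $a_1 = 0$, such that $x_n \coloneqq \exp(a_n)\cdot x$ lies in $\F^n\h$. Since $\h$ is a Lie subalgebra, all the $a_n$ and $x_n$ stay in $\h$, and $a_{n+1}$ will differ from $a_n$ by an element of $\F^n$; passing to the limit $a_\infty = \lim_n a_n$ (which exists by completeness) then yields $\exp(a_\infty)\cdot x = \lim_n x_n = 0$ by the Hausdorff property, showing that $x$ is gauge trivial in $\h$.

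The inductive step requires, given $x_n \in \F^n\h$, an element $\beta \in \F^n\h$ with $x_n \equiv d\beta \pmod{\F^{n+1}}$: once this is found, the linearization of the gauge action gives $\exp(\beta)\cdot x_n \equiv x_n - d\beta \equiv 0 \pmod{\F^{n+1}}$, so setting $a_{n+1} = \mathrm{BCH}(\beta, a_n)$ pushes $x$ one filtration step deeper, while the linearization of $\mathrm{BCH}$ gives $a_{n+1}\equiv a_n \pmod{\F^n}$, which is what makes the sequence converge. To produce $\beta$, I would first find a primitive in the ambient algebra $\g$ and then apply $s$. Here the key point is that $x_n$, being gauge equivalent to $x$ in $\h$ and hence to $0$ in $\g$, is a Maurer--Cartan element of $\g$ lying in $\F^n\g$ that is gauge trivial in $\g$, and such an element is automatically a boundary in the associated graded, i.e. $x_n \equiv d\gamma \pmod{\F^{n+1}}$ for some $\gamma \in \F^n\g$. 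Granting this, $\beta \coloneqq s(\gamma) \in \F^n\h$ works: because $s$ is a filtration-preserving chain map restricting to the identity on $\h$, we get $x_n = s(x_n) \equiv s(d\gamma) = d(s\gamma) \pmod{\F^{n+1}\h}$.

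The main obstacle is exactly this homological input: that gauge triviality of $z \in \F^n\g \cap \MC(\g)$ forces the class of $z$ to vanish in $H_{-1}(\Gr^n \g)$. I expect to prove it by writing the trivialization explicitly. If $\exp(c)\cdot z = 0$ then $z = \exp(-c)\cdot 0 = \sum_{k\geq 0} \frac{(-1)^k}{(k+1)!}(\mathrm{ad}_c)^k(dc)$ modulo $\F^{n+1}$, and I would analyze this one graded piece at a time: decomposing $c = \sum_i \gamma_i$ into its filtration-graded components, the $\Gr^m$-part of the equation for $m < n$ reads $0 = d\gamma_m + (\text{brackets involving } d\gamma_j \text{ with } j<m)$. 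Inducting on $m$ shows each $\gamma_j$ with $j<m$ is a cocycle in the associated graded, which makes every bracket term vanish (each such term contains a factor $d\gamma_{j}$ with $j<m$) and forces $d\gamma_m = 0$; feeding this back at level $m=n$ leaves precisely $z \equiv d\gamma_n \pmod{\F^{n+1}}$, as desired. The delicate aspect is that the low-filtration part of $c$ cannot simply be discarded: one cannot in general replace $c$ by a gauge lying in $\F^n\g$, since lifting a cocycle from $\Gr^n\g$ to a genuine cocycle of $\g$ is obstructed. It is therefore essential to exploit that these lower graded components of $c$ are \emph{automatically} cocycles, so that their contributions to $\Gr^n$ cancel rather than being removed by hand.
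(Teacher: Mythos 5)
Your overall strategy --- successive approximation along the filtration, producing at each stage a primitive of $x_n$ modulo $\F^{n+1}$ and pulling it into $\h$ with $s$ --- is the same as the paper's, and the reduction is set up correctly. But the ``homological input'' on which everything rests is false. You claim that if $z\in\MC(\g)\cap\F^n\g$ is gauge trivial in $\g$, then $z\equiv d\gamma \pmod{\F^{n+1}}$ for some $\gamma\in\F^n\g$, i.e.\ that the class of $z$ dies in $H_{-1}(\Gr^n_\F\g)$. Counterexample: let $\g=\K c\oplus\K z$ be abelian with $|c|=0$, $|z|=-1$, $dc=z$, filtered by $\F^1\g=\g$, $\F^2\g=\K z$, $\F^3\g=0$. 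Then $z\in\MC(\g)\cap\F^2\g$ is gauge trivial via $c$, but $\F^2\g_0=0$, so $z$ represents a nonzero class in $H_{-1}(\Gr^2_\F\g)$. (Taking $\h=\g$ and $s=\mathrm{id}$ shows the retraction hypothesis does not rescue the claim.) The error in your graded induction is the step ``$\gamma_j$ is a cocycle in the associated graded, hence its contribution vanishes'': being a cocycle in $\Gr^j$ only means $d\gamma_j\in\F^{j+1}$, which says nothing about the class of $d\gamma_j$ in $\Gr^m$ for $m>j$. In spectral sequence language, $z$ can be killed by a higher differential $d_r$ coming from lower filtration, and then it has no primitive inside $\F^n\g$. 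What \emph{is} true (and what your computation actually proves, done carefully) is only the weaker statement $z\equiv dc\pmod{\F^{n+1}}$ with $dc\in\F^n\g$ but $c\notin\F^n\g$ in general.

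With only the weaker statement, your construction breaks: $\beta=s(c)$ satisfies $x_n\equiv d\beta\pmod{\F^{n+1}}$ and $d\beta\in\F^n\h$, so the linearized gauge action still pushes $x_n$ one step deeper, but $\beta$ itself need not lie in $\F^n\h$, so the increments $a_{n+1}-a_n$ are not small and the sequence of gauges has no reason to converge. The paper's proof supplies exactly the missing mechanism: it carries the \emph{ambient} trivializing gauge $a_n$ through the induction and corrects it at each step by $a_{n+1}=\mathrm{BCH}(a_n,-s(a_n))$. The identity $s\circ s=s$ then gives $s(a_{n+1})\equiv s(a_n)-s(s(a_n))\equiv 0\pmod{\F^{n+1}}$, so the element of $\h$ used as the $n$-th gauge is $s(a_n)\in\F^n\h$ --- deep in the filtration not because $x_n$ has a graded primitive in $\g$, but because the part of $a_n$ visible to $\h$ has been recursively cancelled. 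This recursion on the ambient gauge is the idea your argument is missing, and without it the convergence of $\prod_n\exp(\beta_n)$ cannot be established.
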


\begin{proof}
	We write $x_1\coloneqq x$ and $a_1\coloneqq a$, and we define inductively the following sequence of elements for $n \geq 1$:
	\[
	a_{n+1} = \mathrm{BCH}(a_n,-s(a_n)), \qquad \text{and} \qquad x_{n+1} = \exp(s(a_n)) \cdot x_n.
	\]
	By construction, the element $x_n$ is gauge equivalent to $x_{n+1}$ via the gauge $s(a_n)$ for all $n$, which lives in $\h$. Each $x_n$ is also gauge equivalent to $0$ via the gauge $a_n$, which instead is in general only an element of $\g$. 
	
	\medskip
	
	We claim that $s(a_n)$, $da_n$, and $x_n$ are in $\F^n\g$ for all $n$. In particular, all three sequences converge to zero. We prove this by induction on $n$, the base case $n=1$ being clear.
	
	\medskip
	
	For the first claim, suppose that $s(a_n) \in\F^n\g$. Then we have 
	\[
	a_{n+1} \equiv  a_n - s(a_n) \pmod {\F^{n+1}\g}.
	\]
	It follows that
	\[
	s(a_{n+1}) \equiv s(a_n)-s(a_n) \equiv 0 \pmod {\F^{n+1}\g}.
	\]
	Here we used the fact that $s(s(x))=s(x)$ for all $x \in \g$.
	
	\medskip
	
	The second and third claims are proven in tandem. Suppose that $x_n$ and $da_n$ are in $\F^n\g$. Consider the equation $\exp(a_n) \cdot x_n = 0$ modulo $\F^{n+1}\g$ to get
	\[
	x_n \equiv da_n \pmod {\F^{n+1}\g}.
	\]
	Since $x_n \in \h$ we have $s(x_n)=x_n$. It follows that $s(da_n)=ds(a_n)$ is also equivalent to $x_n$, modulo $\F^{n+1}\g$. Thus
	\[
	x_{n+1} \equiv x_n - ds(a_n) \equiv 0 \pmod {\F^{n+1}\g}.
	\]
	Moreover, we have the identity
	\[
	d a_{n+1} \equiv da_n - ds(a_n) \pmod {F^{n+1}\g}.
	\]
	But we just saw from the equation
	\[
	x_n \equiv da_n \pmod {\F^{n+1}\g}
	\]
	that
	\[
	da_n \equiv ds(a_n) \pmod {\F^{n+1}\g},
	\]
	so that $da_{n+1} \in \F^{n+1}\g$, as claimed.
	
	\medskip
	
	It follows that $x_1$ is gauge trivial in $\h$. Indeed, all elements of the sequence $x_1,x_2,x_3, \ldots$ in $\h$ are gauge equivalent to each other in $\h$ by construction, since the gauge taking $x_n$ to $x_{n+1}$ is given by an element of $\h$. Since the sequence of gauges converges to the identity in the group, we may consider the (ordered) product $\prod_{n=1}^\infty \exp(s(a_n))$, which is now a well defined gauge from $x_1$ to $0$. 
\end{proof}

\begin{thm}\label{retractionthm}
	Let $\h \subseteq \g$ be a dg Lie subalgebra. Suppose that $\h$ is a retract of $\g$ as a filtered $\h$-module, that is, there is a filtration-preserving chain map $s \colon \g \to \h$ whose restriction to $\h$ is the identity map  and such that $s([x,y]) = [s(x),y]$ for all $x \in \g$ and $y \in \h$.  If $x$ and $y$ are Maurer--Cartan elements of $\h$ which are gauge equivalent in $\g$, then they are gauge equivalent in $\h$. 
\end{thm}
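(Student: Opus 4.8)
The plan is to reduce the two-Maurer--Cartan-element statement to the one-element statement already proved in \cref{thm:gauges to 0}. The key observation is that the hypothesis $s([x,y]) = [s(x),y]$ for $x \in \g$ and $y \in \h$ is exactly what is needed to make the twisting construction of \S\ref{MC-def} compatible with the retraction $s$. First I would fix the Maurer--Cartan element $x \in \MC(\h)$ and twist by it: form the complete filtered dg Lie algebras $\g^x$ and $\h^x$, which have the same underlying bracket and filtration but the twisted differential $d_x = d + [x,-]$. Note that $\h^x \subseteq \g^x$ is still a dg Lie subalgebra, and $s$ is still a filtration-preserving linear retraction onto $\h$.

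The crucial point is to check that $s$ remains a \emph{chain map} for the twisted differentials, i.e.\ that $s \circ d_x^{\g} = d_x^{\h} \circ s$. Since $s$ already commutes with the untwisted differential $d$ by hypothesis, this reduces to verifying that $s([x,z]) = [x, s(z)]$ for all $z \in \g$. But $x \in \h$, so applying the module-retraction property in the form $s([z,x]) = [s(z),x]$ (using that $x \in \h$) together with antisymmetry of the bracket gives exactly this identity. Thus $s \colon \g^x \to \h^x$ is a filtered retraction of the same kind, now respecting the twisted differentials. I expect this verification to be the main technical step, though it is short; the care required is only in tracking which of the two arguments of the bracket is assumed to lie in $\h$.

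Having arranged this, I would translate the hypothesis using the last sentence of \S\ref{MC-def}: an element $z$ is Maurer--Cartan in $\h$ (resp.\ $\g$) if and only if $z - x$ is Maurer--Cartan in $\h^x$ (resp.\ $\g^x$). In particular $y - x \in \MC(\h^x)$, and $0 = x - x \in \MC(\h^x)$. Moreover, the gauge action commutes with the shift by $x$: a gauge equivalence in $\g$ between $x$ and $y$ becomes, after twisting, a gauge equivalence in $\g^x$ between $0$ and $y-x$. Hence $y - x$ is a Maurer--Cartan element of $\h^x$ which is gauge equivalent to $0$ in $\g^x$.

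Now I would apply \cref{thm:gauges to 0} to the pair $\h^x \subseteq \g^x$ with the retraction $s$ and the Maurer--Cartan element $y - x$, concluding that $y - x$ is gauge equivalent to $0$ in $\h^x$. Untwisting (again via the correspondence of \S\ref{MC-def}) yields that $y$ is gauge equivalent to $x$ in $\h$, which is the desired conclusion. The only genuine subtlety is the compatibility of $s$ with the twisted differential discussed above; once that is in place, everything else is a formal translation between the twisted and untwisted pictures.
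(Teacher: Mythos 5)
Your proposal is correct and is essentially the paper's own proof: the paper likewise reduces to \cref{thm:gauges to 0} by twisting the differential by one of the two Maurer--Cartan elements (it uses $d_y$ where you use $d_x$, an immaterial difference) and observing that the $\h$-module property of $s$ makes it a chain map for the twisted differentials. Your explicit verification of $s([x,z])=[x,s(z)]$ via antisymmetry is exactly the point the paper leaves implicit.
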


\begin{proof}
	This result reduces to \cref{thm:gauges to 0} by replacing the differentials in $\h$ and $\g$ with the twisted differential $d_y$ (\S \ref{MC-def}). The fact that $s$ is an $\h$-module morphism implies in particular that $s$ is a chain map with respect to the twisted differentials.
\end{proof}

\begin{rem}
	To any complete Lie algebra $\g$ one can associate a Kan complex of Maurer--Cartan elements $\mathrm{MC}_\bullet(\g)$ \cite{hinich97}, which contains all the information about the deformation theory encoded by $\g$. In particular,  $\pi_0(\mathrm{MC}_\bullet(\g))$ is in bijection with the set of Maurer--Cartan elements of $\g$ modulo gauge equivalence. Thus \cref{retractionthm} states that if $\h\to\g$ is a morphism of Lie algebras which is split injective as a map of $\h$-modules, then the induced map
	\[
	\pi_0(\mathrm{MC}_\bullet(\h))\longrightarrow\pi_0(\mathrm{MC}_\bullet(\g))
	\]
	is injective. It is natural to ask what happens for the higher homotopy groups. This question is answered by a theorem of Berglund \cite[Thm. 5.5]{berglund2015rational}, which gives an identification
	\[
	\pi_n(\mathrm{MC}_\bullet(\g),x)\cong H_{n-1}(\g^x),\quad n\ge1,
	\]
	functorial in $\g$, for any basepoint $x\in\mathrm{MC}(\g)$. It follows that under the hypotheses of \cref{retractionthm} we have injections
	\[
	\pi_n(\mathrm{MC}_\bullet(\h))\longrightarrow\pi_n(\mathrm{MC}_\bullet(\g))
	\]
	for any basepoint $x \in \mathrm{MC}(\h)$ and any $n\ge0$, since the assumptions imply that $\h^x$ is a direct summand of $\g^x$ as a chain complex. 
\end{rem}

\section{A consequence of the PBW theorem}\label{section2}

\begin{para}The goal of this section is to prove \cref{pbwiso}, giving a simple direct sum decomposition of the associative operad, considered as an infinitesimal bimodule over the Lie operad. This result may be considered as a refined form of Gerstenhaber--Schack's Hodge decomposition of the Hochschild complex \cite{gerstenhaberschack}. \cref{pbwiso} is not new --- it seems to have first been proven by Griffin \cite{griffincomodules}, who showed it by explicitly verifying that the Eulerian idempotents used by Gerstenhaber--Schack define morphisms of infinitesimal bimodules. Griffin's paper also explains in detail the relationship with the Hodge decomposition of the Hochschild complex. We will give a short and self-contained proof, explaining that the result can also been seen as a consequence of the Poincar\'e--Birkhoff--Witt theorem. 
	\end{para}

\begin{para}
	Recall that an {operad} can be defined as a monoid in a certain monoidal category: the category of $\mathbb S$-modules, with monoidal structure given by the \emph{composite product} $\circ$ \cite[Section 5.2]{lodayvallette}. As such there are evident notions of \emph{left} and \emph{right modules} over an operad $\P$: an $\mathbb S$-module $\M$ is a left (resp.\ right) $\P$-module if it is equipped with maps $\P \circ \M \to \M$ (resp. $\M \circ \P \to \M$) satisfying axioms of associativity and unit. If $\M$ has commuting structures of a left $\P$-module and a right $\Q$-module we say that it is a $(\P,\Q)$-bimodule.
\end{para}

\begin{para}
	The category of $\mathbb S$-modules is symmetric monoidal with respect to the tensor product (Day convolution) of $\mathbb S$-modules. If $\M$ and $\N$ are right $\Q$-modules, then $\M \otimes \N$ is again a right $\Q$-module in a natural way, making the category of right $\Q$-modules itself symmetric monoidal. The category of $(\P,\Q)$-bimodules is equivalent to the category of $\P$-algebras in the symmetric monoidal category of right $\Q$-modules \cite[Chapter 9]{fressemodules}. 
\end{para}

\begin{para}
	One can also define the \emph{infinitesimal composite product} $\circ_{(1)}$ of two $\mathbb S$-modules \cite[Section 6.1.1]{lodayvallette}. If $\P$ is an operad, an \emph{infinitesimal left} (resp. \emph{right}) \emph{module} is an $\mathbb S$-module $\M$ equipped with a map $\P \circ_{(1)} \M \to \M$ (resp. $\M \circ_{(1)} \P \to \M$) satisfying the analogous unit and associativity axioms. The notion of infinitesimal right module is equivalent to the usual notion of right module, but for left modules the two are strictly different. Moreover, neither notion is stronger or weaker than the other.
\end{para}

\begin{para}\label{inf bimod discussion}
	Let $f\colon \P \to \Q$ be a morphism of operads. Then $\Q$ becomes both a $\P$-bimodule and an infinitesimal $\P$-bimodule. When we consider $\Q$ as a left $\P$-module, we are considering morphisms
	\[
	\P(k) \otimes \Big( \Q(n_1) \otimes \cdots \otimes \Q(n_k) \Big) \longrightarrow \Q(n_1+\cdots+n_k),
	\]
	and when we consider $\Q$ as an infinitesimal left $\P$-module we are considering instead the morphisms
	\[
	\P(k) \otimes \Big(\P(n_1) \otimes \cdots \otimes \Q(n_i)\otimes \cdots \otimes \P(n_k)\Big) \longrightarrow \Q(n_1+\cdots+n_k).
	\]
	This means that considering $\Q$ as a left $\P$-module is equivalent to considering $\Q$ as an algebra over the operad $\P$ in the category of $\mathbb S$-modules, and considering $\Q$ as an infinitesimal left $\P$-module is equivalent to considering $\Q$ as a module over $\P$, where $\P$ is considered as an algebra over itself in the category of $\mathbb S$-modules.
\end{para}

\begin{para}
	There is a pushforward functor $f_!$ from $\P$-algebras to $\Q$-algebras which is left adjoint to the functor $f^*$ restricting a $\Q$-algebra structure to a $\P$-algebra structure along $f$. If $A$ is a $\P$-algebra, then $f_!A$ is the $\Q$-algebra defined as the coequalizer of the two natural arrows
	\[
	\Q(\P(A))\,\, \substack{\longrightarrow\\[-1em] \longrightarrow }\,\, \Q(A)
	\]
	given by applying the $\P$-algebra structure of $A$, and by mapping $\P$ to $\Q$ using $f$ and then applying the operadic composition in $\Q$, respectively. This coequalizer can also be written as a ``relative composite product'' $\Q \circ_\P A$. If we consider the operad $\P$ itself as a $\P$-algebra in right $\P$-modules, then $f_!\P$ is the $\Q$-algebra in right $\P$-modules given by $\Q$ itself, considered as a $(\Q,\P)$-bimodule.
\end{para}

\begin{para}\label{morphismfromlietoass}
	An important example of this pushforward functor is given by the universal enveloping algebra. Any unital associative algebra may be considered as a Lie algebra, with bracket given by the commutator; this forgetful functor corresponds to a morphism of operads $\lie \to \ass^+$, where $\lie$ is the Lie operad and $\ass^+$ is the operad of unital associative algebras. The pushforward gives a functor from Lie algebras to unital associative algebras, which is precisely the usual universal enveloping algebra construction.
\end{para}

\begin{para}What will be more important for us in this paper is the operad $\ass$ of non-unital associative algebras. The pushforward along $\lie \to \ass$ maps a Lie algebra to the augmentation ideal of its universal enveloping algebra, and the pushforward along $\ass \to \ass^+$ is the functor which freely adjoins a unit to a non-unital algebra.
\end{para}

\begin{para}
	For a Lie algebra $\mathfrak g$, the category of $\mathfrak g$-modules is symmetric monoidal. In particular, if $M$ is a $\mathfrak g$-module, then there is a natural $\mathfrak g$-module structure on the symmetric algebra $\operatorname{Sym}(M)$. When $\g=\lie$ is the Lie operad, considered as an algebra over itself in $\mathbb S$-modules, this says that if $\Q$ is an infinitesimal $\lie$-bimodule, then $\operatorname{Sym}(\Q)$ is again naturally an infinitesimal $\lie$-bimodule, where $\operatorname{Sym}(\Q)=\bigoplus_{k\geq 0} \sym^k(\Q)$ denotes the direct sum of all symmetric powers of $\Q$, taken in the category of $\mathbb S$-modules. This observation, and those of \S\ref{inf bimod discussion}, make the following proposition meaningful:
\end{para}

\begin{prop}\label{pbwiso}
	There is an isomorphism of infinitesimal $\lie$-bimodules
	$
	\ass^+ \cong \operatorname{Sym}(\lie)$. Similarly, $\ass \cong \bigoplus_{k \geq 1} \sym^k(\lie)$ as infinitesimal $\lie$-bimodules. 
\end{prop}

\begin{proof}
	Consider $\lie$ as a bimodule over itself. Then the $(\ass^+,\lie)$-bimodule given by $f_!\lie$, i.e.\ the universal enveloping algebra of $\lie$, is given by $\ass^+$. The Poincar\'e--Birkhoff--Witt theorem states that for any Lie algebra $\g$ in characteristic zero there is an isomorphism of $\g$-modules
	\[
	U\g\cong\sym(\g).
	\]
	This theorem is true for Lie algebras in any $\K$-linear symmetric monoidal abelian category \cite[\S 1.3.7]{delignemorgan}. In particular, $\ass^+$ is isomorphic to the symmetric algebra on $\lie$, considered as a module over the Lie algebra $\lie$ in the category of right $\lie$-modules. But a module over the Lie algebra $\lie$ in the category of right $\lie$-modules is exactly the same thing as an infinitesimal $\lie$-bimodule.
\end{proof}

\begin{rem}
	If we disregard the bimodule structure, \cref{pbwiso} expresses the well-known fact that $\ass^+\cong\com^+\circ\lie$. We write $\sym(\lie)$ rather than $\com^+\circ\lie$ because the latter notation obscures the infinitesimal bimodule structure. 
\end{rem}

\begin{cor}\label{pbwcor}
	Let $f \colon \lie \to \ass$ be the natural morphism described in \S \ref{morphismfromlietoass}. There is a morphism of infinitesimal $\lie$-bimodules $s \colon \ass \to \lie$ such that $s \circ f = \operatorname{id}_{\lie}$.
\end{cor}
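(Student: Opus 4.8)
The plan is to read off the retraction $s$ directly from the decomposition provided by \cref{pbwiso}. That proposition gives an isomorphism of infinitesimal $\lie$-bimodules $\ass \cong \bigoplus_{k\geq 1}\sym^k(\lie)$, in which the lowest summand $\sym^1(\lie)$ is just $\lie$ itself. So the natural candidate for $s$ is the projection of $\ass$ onto this lowest summand, and the statement to verify is that $f$ is the corresponding inclusion.

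First I would identify what $f$ becomes under the isomorphism of \cref{pbwiso}. The isomorphism $\ass^+ \cong \sym(\lie)$ is the Poincar\'e--Birkhoff--Witt (symmetrization) isomorphism $U(\lie)\cong\sym(\lie)$ for the Lie algebra $\lie$ in the category of right $\lie$-modules, as used in the proof of \cref{pbwiso}. The symmetrization map restricts on the linear part $\sym^1(\lie)=\lie$ to the canonical map from a Lie algebra to its universal enveloping algebra, which in operadic terms is exactly the morphism $f\colon\lie\to\ass$ of \S\ref{morphismfromlietoass} (its image lands in the augmentation ideal $\ass$, since the bracket $[x,y]=xy-yx$ does not involve the unit). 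Hence, under the identification of \cref{pbwiso}, the map $f$ is precisely the inclusion of the summand $\sym^1(\lie)=\lie$ into $\bigoplus_{k\geq 1}\sym^k(\lie)=\ass$.

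Given this, I would define $s$ as the composite
\[
s\colon \ass \xrightarrow{\ \cong\ } \bigoplus_{k\geq 1}\sym^k(\lie) \longrightarrow \sym^1(\lie)=\lie,
\]
where the second arrow is the projection onto the lowest summand. Because the decomposition of \cref{pbwiso} is one of infinitesimal $\lie$-bimodules, which form an additive category, the projection onto a direct summand is a morphism of infinitesimal $\lie$-bimodules, and therefore so is $s$. Finally $s\circ f=\operatorname{id}_\lie$, since $f$ is the inclusion of the very summand onto which $s$ projects.

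The only real content is the identification in the second step, namely that $f$ corresponds to the inclusion of $\sym^1(\lie)$ and not merely to some map landing in it. This is where I expect the main (and only) difficulty to lie: one must use that the PBW isomorphism is the symmetrization map, whose restriction to the linear part is the identity, so that the embedding $\lie\hookrightarrow\ass$ realizing $\sym^1(\lie)$ agrees on the nose with $f$. A quick sanity check in arity $2$ confirms the picture: $\ass(2)$ is two-dimensional, spanned by $x_1x_2$ and $x_2x_1$; the antisymmetric combination $x_1x_2-x_2x_1=f([x_1,x_2])$ spans $\sym^1(\lie)(2)=\lie(2)$, while the symmetric combination $x_1x_2+x_2x_1$ spans $\sym^2(\lie)(2)$, so that $f$ visibly picks out the first summand.
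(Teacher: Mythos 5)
Your proof is correct and follows the same route as the paper: the paper's entire proof of this corollary is the one-line observation that $s$ is the projection onto the summand $\sym^1(\lie)=\lie$ in the decomposition of \cref{pbwiso}. The extra care you take in checking that $f$ really is the inclusion of that summand (via the symmetrization form of the PBW isomorphism restricting to the identity on the linear part) is a point the paper leaves implicit, and your arity-$2$ sanity check confirms it correctly.
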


\begin{proof}
	Indeed, $s$ is given by projecting onto the summand $\sym^1(\lie)=\lie$. 
\end{proof}

\begin{para}
	In the next section, we will consider the \emph{deformation complexes} of $\Aoo$-deformations and $\Coo$-defor\-ma\-tions of a $\Coo$-algebra. These are (essentially) the \emph{Hochschild cochain complex} and the \emph{Harrison cochain complex}, respectively. The isomorphism of infinitesimal $\lie$-bimodules
	\[
	\ass \cong \bigoplus_{k \geq 1} \sym^k(\lie)
	\]
	gives rise to a direct sum decomposition of the Hochschild cochains of a commutative or $\Coo$-algebra, for which the $k=1$ summand $\sym^1(\lie)=\lie$ is identified with the Harrison cochains. This decomposition coincides with the Hodge decomposition of Hochschild cohomology of Quillen \cite[\S 8]{quillencommutativerings} and Gerstenhaber--Schack \cite{gerstenhaberschack}. The relationship between the Hodge decomposition and the Poincar\'e--Birkhoff--Witt theorem seems to have first been made explicit by Bergeron and Wolfgang \cite{bergeronwolfgang}, although in a different form than the one found here. The only fact we will need for the proofs of Theorems \hyperref[thmA main]{A} and \hyperref[thm B stated again]{B} is Corollary \ref{pbwcor}, which in this context says that the Hochschild cochains retracts onto the Harrison cochains, and in particular that Harrison cohomology is a direct summand of Hochschild cohomology \cite{barrhochschild}. However, it does not seem possible to deduce Theorems \hyperref[thmA main]{A} and \hyperref[thm B stated again]{B} purely from the fact that Harrison cohomology injects into Hochschild cohomology; we really do need the stronger statement that there exists a splitting on the chain level (or a splitting of infinitesimal operadic bimodules). By contrast, Saleh \cite{saleh} proves the weaker statement that if a $\Coo$-algebra is formal as an $\Aoo$-algebra then it is also formal as a $\Coo$-algebra, using only the fact that Harrison cohomology is a direct summand of Hochschild cohomology.
\end{para}

\begin{para}
	Proposition \ref{pbwiso}, stated only for right modules, can be found in \cite[Lemma 10.2.6]{fressemodules}. Dotsenko and Tamaroff \cite{dotsenkotamaroff} explain more generally that a morphism of operads $\P \to \Q$ satisfies a PBW-type theorem if and only if $\Q$ is free as a right $\P$-module. Fresse and Dotsenko--Tamaroff consider right modules instead of infinitesimal bimodules, since for them the statement of the PBW theorem is that $U\g \cong \sym(\g)$ as vector spaces, not as $\g$-modules.	
\end{para}

\section{Proof of Theorem A}\label{sect:thma}

\begin{para}
The goal of this section is to prove Theorem \hyperref[thmA main]{A}: 
\end{para}

\begin{thmA}\label{thmA main}
	Two commutative dg algebras $A$ and $B$ are quasi-isomorphic if and only if they are quasi-isomorphic as associative dg algebras.
\end{thmA}

\begin{para}\label{lurie1}
	We remind the reader of our convention (\S\ref{conventions}) that all dg algebras are assumed to be non-unital unless specified otherwise. It is natural to ask whether Theorem \hyperref[thmA main]{A} hold also in the unital setting, i.e.\ whether two unital commutative dg algebras are quasi-isomorphic whenever they are quasi-isomorphic as unital associative dg algebras. The answer is \emph{yes} --- in fact, this is implied by Theorem \hyperref[thmA main]{A}, together with a general result of Lurie \cite[Corollary 5.4.4.7]{HA}, as was pointed out to us by an anonymous referee. Informally, Lurie's result states that in order to give a unital structure to a non-unital algebra in an $\infty$-category it is enough to exhibit a \emph{quasi-unit}, which is roughly a unit in the homotopy category, and that a unital structure is unique if it exists.
	A precise statement (a weakening of \cite[Corollary 5.4.4.7]{HA}) is that if $\mathcal C$ is a symmetric monoidal $\infty$-category, $\mathrm{CAlg}(\mathcal C)$ denotes the $\infty$-category of $\mathsf E_\infty$-algebras in $\mathcal C$, and $\mathrm{CAlg}^{\mathrm{nu}}(\mathcal C)$ denotes the $\infty$-category of non-unital $\mathsf E_\infty$-algebras in $\mathcal C$, then the forgetful functor
	\[
	\mathrm{CAlg}(\mathcal C)^\simeq \longrightarrow \mathrm{CAlg}^{\mathrm{nu}}(\mathcal C)^\simeq
	\]
	can be identified with the inclusion of the full subcategory spanned by those non-unital $\mathsf E_\infty$-algebras which admit a quasi-unit. Here $(-)^\simeq$ denotes the maximal Kan subcomplex of an $\infty$-category, i.e.\ the subcategory with the same objects and only isomorphisms. In particular, the functor is injective on isomorphism classes.
\end{para}
	
\begin{para}\label{lurie2}
	Specializing to the setting where $\mathcal C$ is the $\infty$-category of unbounded $\K$-chain complexes modulo quasi-isomorphism, Lurie's result shows in particular that two unital commutative dg algebras $A$ and $B$ are quasi-isomorphic if and only if they are quasi-isomorphic as non-unital commutative dg algebras. (Indeed, the homotopy theories of $\mathsf E_\infty$-algebras and commutative algebras are equivalent over a field of characteristic zero.) With this fact in place it is clear that the unital version of Theorem \hyperref[thmA main]{A} follows from the non-unital version, whose proof will take up the rest of this section.
\end{para}

\begin{para}\label{properties}
	Instead of working with commutative  algebras we work in the larger category of $\Coo$-algebras and $\Coo$-morphisms, also known as $\infty$-morphisms of $\Coo$-algebras. We denote $\Coo$-morphisms by a squiggly arrow. This category has the following useful properties:
	\begin{enumerate}
		\item \cite[Theorem 10.3.10]{lodayvallette} Any $\Coo$-algebra is $\Coo$-quasi-isomorphic to a \emph{minimal} $\Coo$-algebra, i.e.\ a $\Coo$-algebra with zero differential.
		\item \cite[Theorem 10.4.4]{lodayvallette} If two $\Coo$-algebras $A$ and $B$ are quasi-isomorphic, then there exists a $\Coo$-quasi-isomorphism\footnote{As opposed to a zig-zag of quasi-isomorphisms.} $A \rightsquigarrow B$.
		\item  \cite[Theorem 11.4.8]{lodayvallette} Two commutative dg algebras are quasi-isomorphic if and only if they are $\Coo$-quasi-isomorphic.
	\end{enumerate}
\end{para}

\begin{para}
	We will similarly work with $\Aoo$-algebras instead of associative algebras; they satisfy evident analogues properties (1'), (2') and (3').
\end{para}

\begin{para}\label{firstreduction p1}
	Suppose that we are given two commutative dg algebras $A$ and $B$ that are quasi-isomorphic as \emph{associative} dg algebras. Our goal is to show that they are quasi-isomorphic as commutative dg algebras as well. By (1), we may assume that $A$ and $B$ are minimal. By (2'), there is an $\Aoo$-quasi-isomorphism $A\rightsquigarrow B$, and by (3), the proof of Theorem \hyperref[thmA main]{A} is reduced to showing the existence of a $\Coo$-quasi-isomorphism $A\rightsquigarrow B$.
\end{para}

\begin{para}\label{firstreduction p2}
	We can make the following further simplification. Note that a quasi-isomorphism between chain complexes with vanishing differential is just an isomorphism. Therefore by the minimality assumption, the underlying graded vector spaces of $A$ and $B$ are isomorphic, an isomorphism being given by the first component of the given $\Aoo$-morphism. By ``transport of structure'' along this isomorphism we thereby reduce to the case where $A$ and $B$ are minimal $\Coo$-algebras with the same underlying graded vector space that are linked by an $\Aoo$-morphism whose linear component is given by the identity, i.e.\ what is called an \emph{$\Aoo$-isotopy}.
\end{para}

\begin{para}
	Putting all of this together, we see that Theorem \hyperref[thmA main]{A} is implied by the following statement.
\end{para}

\begin{prop}\label{prop:reduced thm A}
	Let $V$ be a chain complex. Suppose that we are given two $\Coo$-algebra structures on $V$, and an $\Aoo$-isotopy between them. Then there also exists a $\Coo$-isotopy between them. 
\end{prop}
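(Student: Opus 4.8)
The plan is to recast both $\Coo$- and $\Aoo$-structures on $V$ as Maurer--Cartan elements of complete filtered dg Lie algebras and then invoke \cref{retractionthm}. Write $\End_V$ for the endomorphism operad of $V$. A $\Coo$-structure on $V$ is the same datum as a Maurer--Cartan element of the convolution dg Lie algebra $\h \coloneqq \hom_{\mathbb{S}}(\com^{\antishriek},\End_V)$, and an $\Aoo$-structure is the same as a Maurer--Cartan element of $\g \coloneqq \hom_{\mathbb{S}}(\ass^{\antishriek},\End_V)$, where $\com^{\antishriek}$ and $\ass^{\antishriek}$ denote the Koszul dual cooperads. Both carry a complete descending filtration by arity (arranged to exclude the arity-one part), compatible with the differential and the bracket. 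I would begin by recording the two standard facts that make this dictionary work: the Maurer--Cartan elements of $\g$ (resp.\ $\h$) are precisely the $\Aoo$- (resp.\ $\Coo$-) structures, and, because the filtration fixes the linear term, two Maurer--Cartan elements are gauge equivalent exactly when the corresponding structures are $\Aoo$- (resp.\ $\Coo$-) isotopic. Under this dictionary the two given $\Coo$-structures become Maurer--Cartan elements $x,y \in \MC(\h)$, and the hypothesised $\Aoo$-isotopy says precisely that their images in $\g$ are gauge equivalent there.

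Next I would set up the comparison between $\h$ and $\g$. Since every operad in sight is finite dimensional in each arity, the cooperads $\com^{\antishriek}$ and $\ass^{\antishriek}$ are, up to suspension, the linear duals of the operads $\lie$ and $\ass$. Hence the morphism $f \colon \lie \to \ass$ of \cref{morphismfromlietoass} dualises to a cooperad morphism $\ass^{\antishriek} \to \com^{\antishriek}$, which upon precomposition induces an injection of complete filtered dg Lie algebras $\iota \colon \h \hookrightarrow \g$; concretely this is the familiar inclusion of the Harrison cochains into the Hochschild cochains, and it realises $\h$ as a dg Lie subalgebra of $\g$.

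The heart of the matter is to produce a retraction $s \colon \g \to \h$ satisfying the hypotheses of \cref{retractionthm}, and this is exactly where \cref{pbwcor} is used. The splitting $s \colon \ass \to \lie$ of infinitesimal $\lie$-bimodules, with $s \circ f = \mathrm{id}_\lie$, dualises and transports through the convolution construction to a map $\g \to \h$, which I again denote $s$. I would then check the four properties required by \cref{retractionthm}. It is filtration-preserving, being induced by an arity-preserving map of $\mathbb{S}$-modules; it restricts to the identity on $\h$, because $s \circ f = \mathrm{id}_\lie$; it is a chain map, because the operadic splitting respects the structure maps out of which the convolution differentials are built; and, most importantly, it is a morphism of $\h$-modules, i.e.\ $s([\phi,\psi]) = [s(\phi),\psi]$ for $\phi \in \g$ and $\psi \in \h$. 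The last identity is the precise shadow of the \emph{two-sidedness} of \cref{pbwcor}: the convolution bracket $[\phi,\psi]$ is a sum of two terms, one grafting $\psi$ into $\phi$ and one grafting $\phi$ into $\psi$, controlled respectively by the left and the right infinitesimal $\lie$-(co)module structures, and the fact that $s$ is a morphism of infinitesimal $\lie$-bimodules on \emph{both} sides is exactly what makes $s$ commute with each term. Equivalently, under the decomposition $\ass \cong \bigoplus_{k \geq 1}\sym^k(\lie)$ of infinitesimal $\lie$-bimodules of \cref{pbwiso}, the subalgebra $\h$ is the $k=1$ summand $\sym^1(\lie) = \lie$ and $s$ is simply the projection onto it.

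Granting the retraction, the proof concludes in one line: $x,y \in \MC(\h)$ are gauge equivalent in $\g$, so \cref{retractionthm} gives that they are gauge equivalent in $\h$, which by the dictionary of the first paragraph means the two $\Coo$-structures are $\Coo$-isotopic. I expect the only real obstacle to be the third step --- turning the operad-level splitting of \cref{pbwcor} into a genuine morphism of convolution Lie algebras and verifying the $\h$-module identity. The delicate point is the bookkeeping of the dualisation between $\ass$ and its Koszul dual cooperad $\ass^{\antishriek}$, together with the check that the left and right infinitesimal bimodule compatibilities line up with the two halves of the convolution bracket; the remaining ingredients (completeness of the filtrations and the Maurer--Cartan/gauge dictionary) are routine.
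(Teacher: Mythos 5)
Your proposal is correct and follows essentially the same route as the paper: identify the two $\Coo$-structures with Maurer--Cartan elements of the convolution Lie algebra $\Def_{\Coo}(V)\subseteq\Def_{\Aoo}(V)$, dualise the infinitesimal-$\lie$-bimodule splitting $s\colon\ass\to\lie$ of \cref{pbwcor} to obtain a filtered retraction of $\Def_{\Aoo}(V)$ onto $\Def_{\Coo}(V)$ as a $\Def_{\Coo}(V)$-module, and conclude by \cref{retractionthm}. Your observation that the two-sidedness of the infinitesimal bimodule structure is what makes $s$ compatible with both halves of the convolution bracket is exactly the point the paper makes via the functoriality of $\overline{\hom}_{\mathbb S}(\M,\N)$ in the infinitesimal bicomodule $\M$.
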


\begin{para}
	For the proof of \cref{prop:reduced thm A}, we will apply \cref{pbwcor} to the \emph{deformation complexes} $\Def_{\Aoo}(V)$ and $\Def_{\Coo}(V)$ of $\Aoo$-algebra and $\Coo$-algebra structures on $V$. These are complete filtered graded dg Lie algebras whose Maurer--Cartan elements are the $\Aoo$-algebra (resp. $\Coo$-algebra) structures on $V$, and whose gauge equivalences are $\Aoo$-isotopies (resp. $\Coo$-isotopies). Elements of the deformation complexes are given by collections of equivariant maps: 
	\[
	\Def_{\Aoo}(V)\coloneqq\prod_{n \geq 2}\hom_{\mathbb{S}_n}\left(\susp\coass(n),\hom_\K(V^{\otimes n}, V)\right)
	\]
	and
	\[
	\Def_{\Coo}(V)\coloneqq\prod_{n \geq 2}\hom_{\mathbb{S}_n}\left(\susp\colie(n),\hom_\K(V^{\otimes n}, V)\right).
	\]
	These complexes are filtered by
	\[
	\F^p\Def_{\Aoo}(V)\coloneqq\prod_{n\ge p+1}\hom_{\mathbb{S}_n}\left(\susp\coass(n),\hom_\K(V^{\otimes n}, V)\right),
	\]
	and similarly for $\Def_{\Coo}(V)$.
\end{para}

\begin{para}
	Here, $\susp\coass$ is the Koszul dual cooperad of $\ass$, given by the operadic suspension \cite[Section 7.2.2]{lodayvallette} of the cooperad $\coass$ encoding coassociative coalgebras. Similarly, $\susp\colie$ is the Koszul dual of $\com$, given by the suspension of the cooperad $\colie$ encoding Lie coalgebras.
\end{para}

\begin{para}\label{description of convolution dgla}
	To describe the Lie algebra structure on the deformation complexes, and to see that $\Def_{\Coo}(V)$ is a Lie subalgebra of $\Def_{\Aoo}(V)$, it is useful to put ourselves in a more general situation. If $\C$ is a dg cooperad and $\P$ is a dg operad, then we can define a complete filtered dg Lie algebra
	\[
	\overline\hom_{\mathbb{S}}(\C,\P) = \prod_{n \geq 2} \hom_{\mathbb{S}_n} (\C(n),\P(n))
	\]
	which is called the \emph{convolution Lie algebra} of $\C$ and $\P$. This construction is covariantly functorial in $\P$ and contravariantly functorial in $\C$. There is a binary operation $\star$ on $\overline\hom_{\mathbb{S}}(\C,\P)$ which can be heuristically described as follows: if $f, g \in \overline\hom_{\mathbb{S}}(\C,\P)$, then $f \star g$ is the composition
	\[
	\C \longrightarrow \C \circ_{(1)} \C \xrightarrow{f \circ_{(1)} g} \P \circ_{(1)} \P \longrightarrow \P,
	\]
	where the first and last arrow are given by the infinitesimal cocomposition (resp.\ composition) of $\C$ (resp.\ $\P$). See  \cite[Section 6.4]{lodayvallette} for a precise description. The Lie bracket is then defined by $[f,g] = f \star g - (-1)^{\vert g \vert \vert f \vert }g \star f$. The deformation complexes can now be defined as $\Def_{\Coo}(V) = \overline\hom_{\mathbb{S}}(\susp\colie,\End_V)$ and $\Def_{\Aoo}(V) = \overline\hom_{\mathbb{S}}(\susp\coass,\End_V)$, where $\End_V$ is the endomorphism operad of $V$. Dualizing the natural injection $\lie\to\ass$ defines a surjection $\coass \to \colie$ which induces the embedding of $\Def_{\Coo}(V)$ into $\Def_{\Aoo}(V)$.
\end{para}

\begin{thm} \cite[Theorem 3]{preliedeformationtheory}
	The Maurer--Cartan elements of $\Def_{\Aoo}(V)$ are in bijection with the set of $\Aoo$-algebra structures on $V$, and the group of gauge equivalences coincides with the group of $\Aoo$-isotopies. Similarly the Maurer--Cartan set of $\Def_{\Coo}(V)$ is the set of $\Coo$-algebra structures on $V$, and the group of gauge equivalences is the group of $\Coo$-isotopies. 
\end{thm}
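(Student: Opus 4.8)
The plan is to realize $\overline\hom_{\mathbb{S}}(\C,\P)$ (for $\C=\susp\coass$ or $\susp\colie$ and $\P=\End_V$) as the complete Lie algebra underlying the convolution \emph{pre-Lie} algebra with product $\star$, and then to run the standard dictionary of operadic Koszul duality. The first observation is purely formal: for an element $f$ of degree $-1$ one has $[f,f]=2\,f\star f$, so a Maurer--Cartan element of $\overline\hom_{\mathbb{S}}(\C,\P)$ is precisely a degree $-1$ map $\alpha\colon\C\to\P$ satisfying $d\alpha+\alpha\star\alpha=0$. By the description of $\star$ recalled in \S\ref{description of convolution dgla}, this is exactly the equation defining a \emph{twisting morphism} from the cooperad $\C$ to the operad $\P$ in the sense of \cite[Section 6.4]{lodayvallette}.

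The bijection on Maurer--Cartan sets then follows from the fundamental theorem of operadic twisting morphisms, i.e.\ the operadic bar--cobar adjunction: twisting morphisms $\C\to\P$ are in natural bijection with morphisms of dg operads $\Omega\C\to\P$, where $\Omega$ denotes the operadic cobar construction. Taking $\C=\susp\coass=\ass^\antishriek$ and using $\Omega(\susp\coass)=\Aoo$, a twisting morphism $\susp\coass\to\End_V$ is the same datum as a morphism of dg operads $\Aoo\to\End_V$, which is by definition an $\Aoo$-algebra structure on $V$; the commutative case is identical, with $\susp\colie=\com^\antishriek$ and $\Omega(\susp\colie)=\Coo$. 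This gives the first assertion in each case.

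For the statement about gauges, I would describe the group of $\infty$-isotopies through the same convolution formalism and match it with the gauge group. Recall that an $\infty$-morphism between two $\Aoo$-structures on $V$ is a morphism of the associated cofree $\C$-coalgebras commuting with the codifferentials encoded by the two Maurer--Cartan elements, and an \emph{isotopy} is one whose linear component is the identity; these form a group under composition. The claim is that the exponential identifies the gauge group $\exp\big(\overline\hom_{\mathbb{S}}(\C,\P)_0\big)$ with this group, carrying the Baker--Campbell--Hausdorff product to composition of isotopies and the gauge action on Maurer--Cartan elements to the action of isotopies on codifferentials by transport of structure. I expect this last identification of the two \emph{actions} to be the main obstacle, since it requires comparing the closed-form gauge formula $\exp(a)\cdot x = x - \sum_{n\ge0}\frac{([a,-])^n}{(n+1)!}\,d_x(a)$ with the explicit pullback of a codifferential along an isotopy. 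The cleanest route should be an inductive comparison along the complete filtration $\F^\bullet$: both actions are filtration-continuous and agree to leading order (each sends $x$ to $x-da$ modulo $\F^{n+1}$ when $x,da\in\F^n$, as recorded above for the gauge action), and the pre-Lie structure $\star$ governs all higher-order corrections in a way that one checks matches the iterated composition of coderivations, so a term-by-term comparison up the filtration yields equality of the two group actions.
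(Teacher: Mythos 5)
Your treatment of the first assertion is correct and is exactly the paper's route: a Maurer--Cartan element of the convolution Lie algebra is by definition an operadic twisting morphism $\susp\coass\to\End_V$ (resp.\ $\susp\colie\to\End_V$), and by the operadic bar--cobar adjunction these correspond to morphisms of dg operads $\Aoo\to\End_V$ (resp.\ $\Coo\to\End_V$), i.e.\ to $\Aoo$- (resp.\ $\Coo$-)algebra structures on $V$. Be aware, though, that the paper does not prove this theorem at all: it is quoted from the cited reference on pre-Lie deformation theory, and the paper explicitly flags the second assertion as the only nontrivial content.

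It is precisely on that second assertion that your argument has a genuine gap. You set up the obvious bijection between degree-zero elements of the convolution algebra and $\infty$-isotopies, and then claim the gauge action and the action of isotopies on codifferentials coincide by ``a term-by-term comparison up the filtration,'' on the grounds that both are filtration-continuous and agree to leading order. But the leading-order agreement you quote --- each action sends $x$ to $x-da$ modulo $\F^{n+1}$ \emph{when $x$ and $da$ already lie in} $\F^n$ --- only controls inputs deep in the filtration; it says nothing about the arity-$n$ components of the two formulas for a general $a\in\F^1\g_0$ and $x\in\F^1\g_{-1}$. If you try to run the induction as stated (the two actions agree modulo $\F^n$, hence modulo $\F^{n+1}$), the inductive step requires the $n$-th order terms of $\exp(a)\cdot x$ and of the transported codifferential to coincide, and there is no soft reason for this: that identity between the Baker--Campbell--Hausdorff/gauge formulas on one side and the sum-over-trees formulas for composition and transport of $\infty$-structures on the other is exactly the content of the theorem. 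The proof in the cited reference establishes it by a closed computation with the pre-Lie exponential $\exp_\star(a)=\sum_{n\ge 0} a^{\star n}/n!$ and the (symmetrized) composition product, expressing both the group law and the gauge action in those terms; your sketch defers all of this to ``one checks,'' which is where the entire difficulty lives.
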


\begin{para}
	The first part of the theorem is well known: a Maurer--Cartan element of $\overline\hom_{\mathbb{S}}(\C,\P)$ is by definition an \emph{operadic twisting morphism} from $\C$ to $\P$, and if $\P$ is a Koszul operad and $V$ is a chain complex, then a twisting morphism from the Koszul dual cooperad $\P^\antishriek$ to $\End_V$ is the same thing as a $\P_\infty$-algebra structure on $V$ \cite[Section 10.1]{lodayvallette}. For the second half of the theorem there is an obvious bijection between the gauge group and the set of $\infty$-isotopies; the nontrivial part of the theorem is to show that the group operations and the group action on the Maurer--Cartan set (which on one side are defined by sums over trees formulas, and on the other side are defined by the Baker--Campbell--Hausdorff formula) are identified under the obvious bijection.
\end{para}

\begin{rem}
	An $\Aoo$-structure on $V$ corresponds to a Maurer--Cartan element in $\Def_{\Aoo}(V)$, and twisting by this Maurer--Cartan element (\S\ref{MC-def}) defines a differential on $\Def_{\Aoo}(V)$. Up to a degree shift and the fact that the $n=1$ component $\hom_\K(V,V)$ is missing, $\Def_{\Aoo}(V)$ with this differential is the Hochschild cochain complex of the $\Aoo$-algebra. Similarly, if we twist $\Def_{\Coo}(V)$ by the Maurer--Cartan element corresponding to a $\Coo$-algebra structure on $V$, we recover the Harrison cochain complex of the $\Coo$-algebra.
\end{rem}

\begin{proof}[Proof of \cref{prop:reduced thm A}]
	We can rephrase the statement in terms of deformation complexes as follows. We are given two Maurer--Cartan elements in the Lie algebra $\Def_{\Coo}(V)$ which are gauge equivalent in the bigger Lie algebra $\Def_{\Aoo}(V)$. We need to prove that the two Maurer--Cartan elements are also gauge equivalent in $\Def_{\Coo}(V)$. This puts us in the situation considered in \cref{section1}, and by \cref{retractionthm} we are done if we can prove that $\Def_{\Aoo}(V)$ retracts onto $\Def_{\Coo}(V)$ as a filtered $\Def_{\Coo}(V)$-module.
	
	\medskip
	
	As already mentioned above, the inclusion $\Def_{\Coo}(V) \hookrightarrow \Def_{\Aoo}(V)$ is induced by the dual of the map $\lie \to \ass$. Clearly any retraction of $\mathbb{S}$-modules $s\colon\ass\to\lie$ will induce a retraction of filtered complexes from $\Def_{\Aoo}(V)$ to $\Def_{\Coo}(V)$, but a priori we will not have any compatibility with the Lie brackets. We claim that if $s$ is a morphism of infinitesimal $\lie$-bimodules, then the induced map
	\[
	\Def_{\Aoo}(V)\longrightarrow\Def_{\Coo}(V)
	\]
	is a morphism of $\Def_{\Coo}(V)$-modules. Showing this will complete the proof of Theorem \hyperref[thmA main]{A} since by \cref{pbwcor} we have such a morphism $s\colon\ass\to\lie$ of infinitesimal $\lie$-bimodules.

	\medskip
	Again it is useful to put ourselves in a slightly more general setting. If $\C$ is a cooperad and $\P$ is an operad, then we have the convolution Lie algebra $\overline{\hom}_{\mathbb S}(\C,\P)$; if $\M$ is an infinitesimal $\C$-bicomodule and $\N$ is an infinitesimal $\P$-bimodule then 
	\[
	\overline{\hom}_{\mathbb S}(\M,\N) = \prod_{n \geq 2}\hom_{\mathbb S_n}(\M(n),\N(n))
	\]
	is naturally a filtered module over the Lie algebra $\overline{\hom}_{\mathbb S}(\C,\P)$. Specifically, if $f \in \overline{\hom}_{\mathbb S}(\C,\P)$ and $\xi \in \overline{\hom}_{\mathbb S}(\M,\N)$ then we set
	\[f\cdot \xi = f \star \xi - (-1)^{|f||\xi|}\xi \star f, \]
	where $f \star \xi$ denotes the composition
	\[
	\M \longrightarrow \C \circ_{(1)} \M \xrightarrow{f \circ_{(1)} \xi} \N \circ_{(1)} \P \longrightarrow \N,
	\]
	with the first and last arrow being given by the infinitesimal left (co)module structures of $\M$ and $\N$, respectively. One defines $\xi \star f$ similarly, using instead the right (co)module structure of $\M$ and $\N$. This construction is functorial in $\M$ and $\N$. Note in particular that a morphism of cooperads $\D \to \C$ makes $\D$ into an infinitesimal bicomodule over $\C$, which means that $\overline{\hom}_{\mathbb S}(\D,\P)$ is both a Lie algebra equipped with a morphism from $\overline{\hom}_{\mathbb S}(\C,\P)$, as well as a module over the  Lie algebra $\overline{\hom}_{\mathbb S}(\C,\P)$. These two structures are compatible with each other, in the sense that the module structure on $\overline{\hom}_{\mathbb S}(\D,\P)$ deduced from the infinitesimal bicomodule structure on $\D$ agrees with the one obtained from the pullback morphism $\overline{\hom}_{\mathbb S}(\C,\P) \to \overline{\hom}_{\mathbb S}(\D,\P)$.
	
	\medskip
	
	The map $\lie \to \ass$ makes $\ass$ into an infinitesimal bimodule over $\lie$. Dualizing, $\coass$ becomes an infinitesimal bicomodule over $\colie$, and this defines the $\Def_{\Coo}(V)$-module structure on $\Def_{\Aoo}(V)$. Given a morphism of infinitesimal bimodules $\ass \to \lie$, we obtain by dualizing a morphism of infinitesimal bicomodules $\colie \to \coass$ and hence a morphism of $\Def_{\Coo}(V)$-modules from $\Def_{\Aoo}(V)$ to $\Def_{\Coo}(V)$. This concludes the proof of Theorem \hyperref[thmA main]{A} in the non-unital case.
\end{proof}

\begin{rem}
	In the first preprint version of this paper, we proved the unital and non-unital versions of Theorem \hyperref[thmA main]{A} by separate arguments, instead of arguing as in \S\S\ref{lurie1}--\ref{lurie2}. Indeed, it is also possible to treat the unital case by a modification of the arguments used to prove the non-unital case. To do this, one may systematically work with \emph{strictly unital} $\Aoo$- and $\Coo$-algebras. The categories of strictly unital $\Aoo$- and $\Coo$-algebras satisfy properties exactly analogous to those stated in \S\ref{properties}. Moreover, strictly unital $\infty$-algebra structures and $\infty$-isotopies are parametrized by deformation complexes exactly like those used in the non-unital case. In the same way that the deformation complexes for non-unital $\Aoo$-algebra and $\Coo$-algebra structures correspond to the Hochschild and Harrison complexes, the deformation complexes for strictly unital algebra structures correspond to the \emph{normalized} Hochschild and Harrison complexes, respectively. One difference is that the strictly unital versions of the deformation complexes are not dg Lie algebras but \emph{curved} Lie algebras, but \cref{retractionthm} is true just as well in the curved setting (cf.\ \cite[Section 5]{followup}). 
\end{rem}

\section{Proof of Theorem B}\label{sect:thmb}

\begin{para}	
	The goal of this section is to prove that two dg Lie algebras with quasi-isomorphic universal enveloping algebras have quasi-isomorphic homotopy completions (for this notion, see \cref{adic and lcs filtration}).
\end{para}

\begin{thmB}\label{thm B stated again}
	Let $\g$ and $\h$ be two dg Lie algebras. If the universal enveloping algebras $U\g$ and $U\h$ are quasi-isomorphic as unital associative dg algebras, then the homotopy completions $\hc\g$ and $\hc\h$ are quasi-isomorphic as dg Lie algebras. 
\end{thmB}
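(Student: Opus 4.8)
The strategy is to dualize the problem via Quillen's bar--cobar adjunction and reduce to the commutative/associative dichotomy already handled by Theorem A, working with the Chevalley--Eilenberg (bar) construction of the dg Lie algebras. First I would pass from $U\g$ and $U\h$ to the cocommutative side: there is a standard quasi-isomorphism of coassociative dg coalgebras between the Lie bar construction $\BarLC\g$ (Chevalley--Eilenberg chains) and the associative bar construction $\BarAC U\g$, and similarly for $\h$. Since $U\g$ and $U\h$ are quasi-isomorphic as associative dg algebras, the functor $\BarAC$ produces a quasi-isomorphism $\BarAC U\g \simeq \BarAC U\h$ of coassociative dg coalgebras, and hence $\BarLC\g$ and $\BarLC\h$ are quasi-isomorphic as coassociative dg coalgebras. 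By the dual form of property (2') for $\Aoo$-coalgebras, this gives an $\Aoo$-quasi-isomorphism $\BarLC\g \rightsquigarrow \BarLC\h$ between the two cocommutative dg coalgebras, viewing them as coassociative.

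The heart of the argument is then a \emph{dual} version of Theorem A: I would prove that an $\Aoo$-quasi-isomorphism between cocommutative dg coalgebras can be upgraded to a $\Coo$-quasi-isomorphism. This should follow by exactly the same deformation-theoretic mechanism as in \cref{sect:thma}, now run for coalgebras: one sets up convolution Lie algebras $\overline{\hom}_{\mathbb S}(\susp\coass,\coEnd_V)$ and $\overline{\hom}_{\mathbb S}(\susp\colie,\coEnd_V)$ classifying $\Aoo$- and $\Coo$-coalgebra structures on a graded vector space $V$, with gauge equivalences corresponding to $\infty$-isotopies, and then invokes \cref{retractionthm} using the same splitting $s\colon \ass \to \lie$ of infinitesimal $\lie$-bimodules from \cref{pbwcor}. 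The only formal difference is that $\coEnd_V$ replaces $\End_V$; the retraction of Harrison onto Hochschild cochains dualizes verbatim, so the abstract criterion of \cref{section1} applies without change. This produces a $\Coo$-quasi-isomorphism $\BarLC\g \rightsquigarrow \BarLC\h$ of cocommutative dg coalgebras.

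Finally I would apply the cobar functor to return to dg Lie algebras, being careful about the two failures noted in the introduction: the naive cobar $\CobarLC$ neither preserves quasi-isomorphisms nor is functorial for $\Coo$-morphisms. The fix is to use the \emph{completed} cobar functor $\CobarLC^\wedge$, which (as recalled in \cref{sect:background}) does preserve quasi-isomorphisms and is functorial for arbitrary $\Coo$-morphisms. Applying $\CobarLC^\wedge$ to the $\Coo$-quasi-isomorphism $\BarLC\g \rightsquigarrow \BarLC\h$ yields a quasi-isomorphism $\CobarLC^\wedge\BarLC\g \simeq \CobarLC^\wedge\BarLC\h$, and by the very definition of homotopy completion the left- and right-hand sides are models for $\hc\g$ and $\hc\h$ respectively. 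I expect the main obstacle to be precisely this last step: establishing that $\CobarLC^\wedge\BarLC(-)$ computes the homotopy completion $\hc(-)$ and that $\CobarLC^\wedge$ has the two desired properties (quasi-isomorphism invariance and $\Coo$-functoriality). These are questions about completed conilpotent coalgebras where the passage to the completion is exactly what makes the relevant power series converge, and getting the bookkeeping of filtrations right is the delicate part; by contrast, the dualization of Theorem A is essentially a formal transcription of the proof in \cref{sect:thma}.
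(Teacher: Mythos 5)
Your overall strategy is exactly the one the paper follows: pass to the coassociative bar constructions to get an $\Aoo$-quasi-isomorphism between $\BarLC\g$ and $\BarLC\h$, upgrade it to a $\Coo$-quasi-isomorphism by dualizing the deformation-theoretic argument of Theorem A (same convolution Lie algebras with $\coEnd_V$ in place of $\End_V$, same splitting from \cref{pbwcor}, same appeal to \cref{retractionthm}), and then apply the completed cobar functor $\CobarLC^\wedge$, whose quasi-isomorphism invariance and $\Coo$-functoriality identify the result with $\hc\g\simeq\hc\h$. Those two stages are carried out in the paper essentially as you describe (\cref{dualThmA} and \cref{complete cobar preserves q-iso}), and your assessment of where the filtration bookkeeping lives is accurate.

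There is, however, a genuine gap at the very first step. The bar construction $\BarAC$ is defined on non-unital (equivalently, augmented) associative algebras and on augmentation-preserving morphisms, whereas the hypothesis only provides a zig-zag of quasi-isomorphisms of \emph{unital} algebras between $U\g$ and $U\h$; these maps have no reason to respect the canonical augmentations, so one cannot simply apply $\BarAC$ to conclude $\BarAC U\g\simeq\BarAC U\h$ and hence $\BarLC\g\simeq\BarLC\h$ as coassociative coalgebras. The paper spends two lemmas repairing exactly this point (\cref{different augmentations} and \cref{lemma:UL qi as unital algebras then also as augmented}): first one shows that any two augmentations of a universal enveloping algebra differ by a unital automorphism, via the universal property and the Poincar\'e--Birkhoff--Witt filtration; then one uses the cofibrancy (triangulated structure) of $(\CobarAC\BarLC\g)^+$ to replace the zig-zag by a single unital quasi-isomorphism and composes with such an automorphism to make it augmented. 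Without this, your first displayed quasi-isomorphism of coassociative coalgebras is not justified. Everything after that point is a faithful transcription of the paper's argument.
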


\begin{para}For the proof we will need to juggle the bar-cobar adjunction between associative algebras and coassociative coalgebras, as well as the bar-cobar adjunction between Lie algebras and cocommutative coalgebras. 	We denote these adjunctions by
	\[
	\CobarAC : \{\text{conilpotent  coassociative dg coalgebras}\}\,\, \substack{\longleftarrow\\[-1em] \longrightarrow }\,\, \{\text{associative dg algebras}\}: \BarAC,
	\]
	and
	\[
	\CobarLC : \{\text{conilpotent  cocommutative dg coalgebras}\}\,\, \substack{\longleftarrow\\[-1em] \longrightarrow }\,\, \{\text{dg Lie algebras}\}: \BarLC ,
	\]
	and we refer the reader to \cite[Chapter 11]{lodayvallette} for more details on how these functors are defined. We will in particular use the fact that the functors $\BarAC$ and $\BarLC$ preserve quasi-isomorphisms \cite[Proposition 11.2.3]{lodayvallette}, and that the counit and the unit of both adjunctions are pointwise quasi-isomorphisms \cite[Corollary 11.3.5]{lodayvallette}.	The reader should keep in mind that algebras and coalgebras are assumed to be non-unital (resp.\ non-counital) unless stated otherwise. 
\end{para}

\begin{para}
	We begin with two simple preliminary lemmas. 
\end{para}

\begin{lem}\label{lemma:U preserves qi}
	If a morphism $\g\to\h$ of dg Lie algebras is a quasi-isomorphism, then $U\g \to U\h$ is also a quasi-isomorphism.
\end{lem}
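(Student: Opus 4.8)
The plan is to use the Poincar\'e--Birkhoff--Witt theorem to reduce the statement to the corresponding fact about the associated graded objects, where the question becomes one about symmetric powers of a quasi-isomorphism. Recall that for any dg Lie algebra $\g$ one has a natural filtration on $U\g$ (coming either from the lower central series or, more simply, from the tensor algebra grading), and the PBW theorem in characteristic zero gives a natural isomorphism $\operatorname{Gr}(U\g) \cong \sym(\g)$ of the associated graded with the symmetric algebra on $\g$. This is precisely the vector-space (or chain-complex) level incarnation of \cref{pbwiso}, applied to the Lie algebra $\g$ itself rather than to the operad $\lie$.

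First I would fix compatible filtrations on $U\g$ and $U\h$ so that the map $U\g \to U\h$ induced by a Lie quasi-isomorphism $\phi\colon \g \to \h$ is filtration-preserving. Passing to associated graded objects, the induced map on $\operatorname{Gr}$ is identified, via the PBW isomorphism, with $\sym(\phi)\colon \sym(\g) \to \sym(\h)$, which decomposes as the direct sum $\bigoplus_{k \ge 0} \sym^k(\phi)$ of the induced maps on symmetric powers. Since we work over a field of characteristic zero, each symmetric power $\sym^k(V)$ is a direct summand (the $\mathbb S_k$-coinvariants, equivalently invariants) of the $k$-fold tensor power $V^{\otimes k}$. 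A quasi-isomorphism of chain complexes induces a quasi-isomorphism on tensor powers by the K\"unneth theorem, and taking the characteristic-zero-projector-defined summand $\sym^k$ preserves this; hence each $\sym^k(\phi)$ is a quasi-isomorphism, and therefore so is their direct sum $\operatorname{Gr}(U\phi)$.

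Finally I would upgrade the statement from associated graded to the filtered objects themselves. Here one invokes a standard spectral-sequence (or filtered-complex) comparison argument: a filtration-preserving chain map between filtered complexes that induces a quasi-isomorphism on associated graded pieces induces a quasi-isomorphism on the total complexes, provided the filtrations are suitably bounded or exhaustive and Hausdorff so that the relevant spectral sequences converge. For the PBW filtration each filtration piece $F^p U\g$ is a finite-step subobject in each internal degree, so convergence is not an issue, and the conclusion that $U\phi$ is a quasi-isomorphism follows.

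The main obstacle I anticipate is bookkeeping around the convergence/boundedness of the filtration needed to pass from $\operatorname{Gr}$ back to the filtered complex, especially since $\g$ is differential graded and need not be bounded; one must be slightly careful that the PBW filtration is exhaustive and that in each fixed homological degree the comparison of associated graded pieces assembles correctly. A secondary subtlety is checking naturality of the PBW isomorphism with respect to the Lie map $\phi$, so that $\operatorname{Gr}(U\phi)$ really is identified with $\sym(\phi)$ rather than merely being abstractly isomorphic to it; this is exactly the kind of functoriality encoded by the infinitesimal-bimodule formulation of \cref{pbwiso}, so I expect it to go through cleanly.
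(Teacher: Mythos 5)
Your argument is correct, but it takes a genuinely different route from the paper's. The paper's proof is a one-liner: in characteristic zero the Poincar\'e--Birkhoff--Witt theorem gives a \emph{natural isomorphism of chain complexes} $U\g \cong \sym(\g)$ (via the symmetrization map, cf.\ Quillen's Appendix~B), so $U\phi$ is literally conjugate to $\sym(\phi)$ and no filtration argument is needed at all; one only has to observe that $\sym(\phi)$ is a quasi-isomorphism, which follows from K\"unneth and the fact that $\sym^k$ is a direct summand of the $k$-th tensor power in characteristic zero (the same observation you make). You instead use the weaker, graded form of PBW, $\operatorname{Gr}(U\g) \cong \sym(\g)$, and then climb back up the PBW filtration by a filtered-complex comparison. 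That step is fine: the PBW filtration is increasing, exhaustive and bounded below ($F_0 \subseteq F_1 \subseteq \cdots$ with $U\g = \bigcup_p F_p$), so induction on $p$ with the five lemma followed by the exactness of filtered colimits closes the argument --- this is essentially the easy half of \cref{firstfiltrationlemma}, and your worry about convergence is unfounded in this bounded-below situation. What your approach buys is that it only needs the associated-graded form of PBW; what it costs is the extra comparison step, and note that it does \emph{not} actually weaken the characteristic-zero hypothesis, since you still invoke characteristic zero to realize $\sym^k$ as a summand of the tensor power. Your final concern about naturality is also resolved more cleanly by the paper's route: the chain-level PBW isomorphism is natural in $\g$, so there is no need to check separately that $\operatorname{Gr}(U\phi)$ is identified with $\sym(\phi)$.
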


\begin{proof}
	If $\g\to\h$ is a quasi-isomorphism of Lie algebras, then $\mathrm{Sym}(\g) \to \mathrm{Sym}(\h)$ is a quasi-isomorphism of chain complexes. The statement then follows immediately from the functoriality of the Poincar\'e--Birkhoff--Witt isomorphism, see e.g.\ \cite[Thm. 2.3 of Appendix B]{quillenrationalhomotopytheory}. 
\end{proof}

\begin{para}
	It is well-known that taking the augmentation ideal gives an equivalence of categories between augmented associative algebras and non-unital associative algebras, its inverse associating to an algebra $A$ the augmented unital algebra $A^+$ obtained from $A$ by formally adding a unit. Similarly, we have an equivalence of categories between coaugmented coassociative coalgebras and non-counital coassociative coalgebras.
	
\end{para}

\begin{lem}\label{lemma:isom cobar U}
	For any cocommutative conilpotent dg coalgebra $C$ there is a natural isomorphism of augmented dg associative algebras $(\CobarAC C)^+ \cong U\CobarLC C$. 
\end{lem}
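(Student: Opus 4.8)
The plan is to identify both augmented dg algebras with the tensor algebra $T(s^{-1}C)$ carrying one and the same differential, and to check that this identification is natural.

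First I would pin down the underlying graded objects. Viewing the cocommutative coalgebra $C$ as merely coassociative, its cobar construction is the non-unital tensor algebra $\CobarAC C = \bigoplus_{n\ge1}(s^{-1}C)^{\otimes n}$ equipped with the cobar differential $d = d_1 + d_2$, where $d_1$ is induced by the internal differential of $C$ and $d_2$ by the reduced coproduct; adjoining a unit gives $(\CobarAC C)^+ = T(s^{-1}C)$ as a graded associative algebra. On the Lie side, $\CobarLC C$ is the free graded Lie algebra $\lieop{s^{-1}C}$ equipped with a differential $\partial = \partial_1 + \partial_2$. The key classical input is that the universal enveloping algebra of a free Lie algebra is the corresponding free associative algebra, $U\,\lieop{W} \cong T(W)$ naturally in $W$ (which follows from the universal property, or from the Poincar\'e--Birkhoff--Witt theorem). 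Applying this with $W = s^{-1}C$ identifies the graded algebra underlying $U\CobarLC C$ with $T(s^{-1}C)$ as well.

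Next I would compare the two differentials on $T(s^{-1}C)$. Both $d$ on $(\CobarAC C)^+$ and the differential $U\partial$ induced on $U\CobarLC C$ are derivations of the tensor algebra, so each is determined by its restriction to the generators $s^{-1}C$. The linear parts $d_1$ and $\partial_1$ both come from the internal differential of $C$ and visibly agree. For the quadratic parts, $d_2(s^{-1}c)$ is obtained by applying the reduced coproduct $\sum c_{(1)}\otimes c_{(2)}$ of $C$ and \emph{multiplying} the two desuspended factors in $T(s^{-1}C)$, whereas $\partial_2(s^{-1}c)$ applies the same coproduct and takes the Lie \emph{bracket} of the factors; under $U$ this bracket becomes the commutator in $T(s^{-1}C)$.

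The crux is therefore to verify that, for cocommutative $C$, the commutator expression $\sum \pm\,[s^{-1}c_{(1)},s^{-1}c_{(2)}]$ reproduces the multiplication expression $\sum \pm\,(s^{-1}c_{(1)})(s^{-1}c_{(2)})$. This is precisely the effect of the desuspension, encoded by the operadic suspension $\susp$ relating $\com$ to $\lie$: the graded-symmetry of the coproduct of $C$, combined with the sign introduced by desuspending the factors, makes the two terms of each commutator \emph{equal} rather than cancel, so that the antisymmetrization folds two equal summands together and the resulting factor of two cancels the $\tfrac12$ present in $\partial_2$, yielding $d_2$ exactly. Carrying out this sign computation carefully --- reconciling the Koszul sign rule, the desuspension signs, and the operadic-suspension conventions --- is the main obstacle, and the only genuinely nontrivial point; everything else is formal. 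Finally, naturality of the isomorphism in $C$ is immediate, since every construction used ($\CobarAC$, $\CobarLC$, $U$, and the identification $U\,\lieop{-}\cong T(-)$) is functorial.
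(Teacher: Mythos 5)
Your proposal is correct and takes essentially the same route as the paper: the paper's proof likewise identifies the universal enveloping algebra of the free Lie algebra with the tensor algebra and then asserts that compatibility with the cobar differentials ``is a computation,'' citing Quillen's Appendix B. The sign analysis you outline --- cocommutativity of the coproduct combined with the desuspension signs making the two terms of each commutator add rather than cancel --- is precisely the computation the paper defers to that reference.
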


\begin{proof}
	Ignoring the cobar differentials, the result just says that the tensor algebra is canonically isomorphic to the universal enveloping algebra of the free Lie algebra. The compatibility of the isomorphism with the differentials is a computation, see e.g.\ \cite[p. 290, last paragraph]{quillenrationalhomotopytheory}.
\end{proof}

\begin{para}
	As a first step towards Theorem \hyperref[thm B stated again]{B}, we will show that if $U\g$ and $U\h$ are quasi-isomorphic as unital associative algebras, then they are also quasi-isomorphic as \emph{augmented}  associative algebras. This is a consequence of the following lemma, see also \cite[Lemma 2.1]{rileyusefi}.
\end{para}

\begin{lem}\label{different augmentations}
	Let $\g$ be a dg Lie algebra. Let $u \colon \K \to U\g$ and $\varepsilon \colon U\g \to \K$ be the unit element and augmentation of its universal enveloping algebra.  Suppose that $\overline{\varepsilon} : U\g \to \K$ is any other augmentation of  $U\g$. Then there exists an automorphism $\alpha \colon U\g \to U\g$ of unital associative algebras such that $\varepsilon = \overline{\varepsilon}\alpha$. 
\end{lem}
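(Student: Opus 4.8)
The plan is to exploit the universal property of the enveloping algebra: a morphism of unital dg algebras out of $U\g$ is the same thing as a morphism of dg Lie algebras out of $\g$ (with the commutator bracket on the target). First I would record what the data of an augmentation amounts to. Restricting $\overline{\varepsilon}$ to $\g \subseteq U\g$ gives a degree-zero linear functional $\lambda \coloneqq \overline{\varepsilon}|_\g \colon \g \to \K$, and the fact that $\overline{\varepsilon}$ is a morphism of dg algebras translates into two properties: $\lambda([x,y]) = 0$ for all $x,y$ (because $\K$ is commutative and $\overline{\varepsilon}$ is multiplicative), and $\lambda \circ d = 0$ (because $\overline{\varepsilon}$ is a chain map and $\K$ carries the zero differential). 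The standard augmentation $\varepsilon$ corresponds to $\lambda = 0$, so $\lambda$ records the discrepancy between $\overline{\varepsilon}$ and $\varepsilon$.

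Next I would build $\alpha$ by a \emph{shearing} construction. Define $\phi \colon \g \to U\g$ by $\phi(x) = x - \lambda(x)\cdot 1$. Because the unit is central, $[\phi(x),\phi(y)] = [x,y]$, while $\phi([x,y]) = [x,y]$ by the first property of $\lambda$; thus $\phi$ respects brackets. It is a chain map precisely because $\lambda \circ d = 0$, and it is degree-preserving because $\lambda$ is concentrated in degree zero. Hence $\phi$ is a morphism of dg Lie algebras, and by the universal property it extends to a morphism $\alpha \colon U\g \to U\g$ of unital dg algebras.

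To see that $\alpha$ is an automorphism, I would exhibit an explicit inverse coming from the opposite shear $\psi(x) = x + \lambda(x)\cdot 1$, which is a morphism of dg Lie algebras by the same reasoning and extends to $\beta \colon U\g \to U\g$. Checking on the generating set $\g$ gives $\beta(\alpha(x)) = \beta(x) - \lambda(x)\,\beta(1) = x + \lambda(x)\cdot 1 - \lambda(x)\cdot 1 = x$, and symmetrically $\alpha\beta = \mathrm{id}$; since $\g$ together with the unit generates $U\g$ as a unital algebra, this forces $\beta = \alpha^{-1}$. Finally, the identity $\varepsilon = \overline{\varepsilon}\alpha$ only needs to be verified on generators and on the unit: $\overline{\varepsilon}(\alpha(x)) = \overline{\varepsilon}(x) - \lambda(x)\overline{\varepsilon}(1) = \lambda(x) - \lambda(x) = 0 = \varepsilon(x)$, while both sides send $1$ to $1$, and both are algebra maps.

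There is no serious obstacle here; the only points requiring care are bookkeeping. One must make sure the shear $\phi$ is genuinely a \emph{chain} map — this is exactly where the hypothesis that $\overline{\varepsilon}$ is a morphism of dg algebras (hence $\lambda \circ d = 0$) is used, and it is the reason the construction works for dg Lie algebras and not merely in the classical case. One should also confirm that $\lambda$ vanishes off degree zero so that $\phi$ preserves degree, and invoke the fact that $\g$ generates $U\g$ to reduce the final identities to the generators.
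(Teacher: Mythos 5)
Your construction of $\alpha$ is exactly the paper's: the shear $\phi(x)=x-\lambda(x)\cdot 1$ is precisely the composite $\g\to U\g\xrightarrow{\mathrm{id}-u\circ\overline{\varepsilon}}U\g$ used there, and the verification that it is a morphism of dg Lie algebras and that $\varepsilon=\overline{\varepsilon}\alpha$ on generators matches the paper line for line. Where you genuinely diverge is the proof that $\alpha$ is invertible. The paper argues via the Poincar\'e--Birkhoff--Witt filtration: $\alpha$ preserves $F_kU\g$, induces the identity on the associated graded, and is therefore bijective since the filtration is bounded below and exhaustive. You instead observe that the shears form a group --- the opposite shear $\psi(x)=x+\lambda(x)\cdot 1$ is again a morphism of dg Lie algebras, extends to $\beta\colon U\g\to U\g$, and $\beta\alpha$ and $\alpha\beta$ agree with the identity on the generating set $\g$ (and on the unit), hence equal the identity. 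Both arguments are correct; yours is more elementary and gives the inverse explicitly, while the paper's filtration argument is the more robust template (it applies to any unital endomorphism inducing the identity on $\operatorname{gr}U\g$, without needing to guess an inverse). Your remarks about $\lambda$ killing brackets, vanishing on boundaries, and being concentrated in degree zero are the right bookkeeping points and are all easily checked.
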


\begin{proof}
	Consider the composition
	\[
	\g \longrightarrow U\g \xrightarrow{\mathrm{id}- u \circ \overline{\varepsilon}}U\g,
	\]
	which is a morphism of Lie algebras. By the universal property of the enveloping algebra, this induces a morphism of unital associative  algebras $\alpha: U\g \to U\g $. We have $\varepsilon(x) = \overline{\varepsilon} \circ \alpha(x)$ for all $x \in U\g$. Indeed, since $\g$ generates $U\g$ it is enough to check this equality for $x \in \g$, in which case the identity is obvious. Moreover, $\alpha$ is an isomorphism. To see this, we start by noticing that $\alpha$ preserves the Poincar\'e--Birkhoff--Witt filtration on $U\g$, i.e.\ the filtration obtained by declaring that $F_k U\g$ is spanned by products of at most $k$ elements of $\g$. Indeed, $\alpha$ maps $\g$ into $F_1U\g$, so the result follows since $\g$ generates $U\g$. It is also straightforward to check that the induced map on the associated graded is the identity map. Since the filtration is bounded below and exhaustive, it follows that $\alpha$ is bijective.
\end{proof}

\begin{lem}\label{lemma:UL qi as unital algebras then also as augmented}
	Let $\g$ and $\h$ be dg Lie algebras. Suppose $U\g$ and $U\h$ are quasi-isomorphic as unital associative algebras. Then they are also quasi-isomorphic as augmented associative algebras. 
\end{lem}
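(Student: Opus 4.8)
The plan is to exhibit a roof of \emph{augmented} quasi-isomorphisms
\[
U\g \xleftarrow{\ \sim\ } R \xrightarrow{\ \sim\ } U\h
\]
in which both legs respect the canonical augmentations. The only real difficulty is that an arbitrary unital quasi-isomorphism between enveloping algebras need not be compatible with augmentations; this is exactly the defect that \cref{different augmentations} is designed to repair, but that lemma applies \emph{only} to enveloping algebras. The key idea is therefore to choose the apex $R$ of the roof to be a cofibrant replacement of $U\g$ that is itself an enveloping algebra, so that \cref{different augmentations} can be brought to bear on $R$.

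Concretely, I would set $\tilde{\g} \coloneqq \CobarLC\BarLC\g$, the bar--cobar resolution of $\g$: a quasi-free, hence cofibrant, dg Lie algebra with a quasi-isomorphism $\tilde{\g} \xrightarrow{\ \sim\ } \g$. Put $R \coloneqq U\tilde{\g}$. By \cref{lemma:U preserves qi} the induced map $c \colon R \to U\g$ is a quasi-isomorphism, and since the enveloping algebra functor carries the canonical augmentation of $\tilde{\g}$ to that of $\g$, the map $c$ is a morphism of augmented algebras. Moreover, by \cref{lemma:isom cobar U} we have $R \cong (\CobarAC\BarLC\g)^+$, which exhibits $R$ as a freely unitalized cobar construction and hence as a cofibrant unital dg algebra. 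Because every unital dg algebra is fibrant and $R$ is cofibrant, the isomorphism $R \simeq U\g \simeq U\h$ in the homotopy category of unital dg algebras is represented by an honest quasi-isomorphism of unital algebras $f \colon R \to U\h$.

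It then remains to make $f$ compatible with the augmentations. The composite $\varepsilon_\h \circ f$ is an augmentation of $R = U\tilde{\g}$, so \cref{different augmentations}, applied to the enveloping algebra $R$, provides a unital algebra automorphism $\alpha \colon R \to R$ with $\varepsilon_{\tilde{\g}} = \varepsilon_\h \circ f \circ \alpha$. Since $\alpha$ is an isomorphism, $f\alpha \colon R \to U\h$ is again a quasi-isomorphism, and now $\varepsilon_\h \circ (f\alpha) = \varepsilon_{\tilde{\g}}$, so $f\alpha$ is a morphism of augmented algebras. Together with $c$ this yields the desired roof of augmented quasi-isomorphisms $U\g \xleftarrow{\,c\,} R \xrightarrow{\,f\alpha\,} U\h$, proving that $U\g$ and $U\h$ are quasi-isomorphic as augmented associative dg algebras.

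The step I expect to demand the most care is the reduction from a \emph{zig-zag} of unital quasi-isomorphisms to a single morphism: \cref{different augmentations} corrects one map at a time and is useless against the intermediate terms of an arbitrary zig-zag, whose augmentations we cannot control. This is resolved by the homotopy-theoretic observation that a cofibrant apex can be connected to the fibrant object $U\h$ by a \emph{direct} quasi-isomorphism, combined with the crucial point that the cobar resolution $\CobarLC\BarLC\g$ is a Lie algebra, so that its enveloping algebra $R$ is again an enveloping algebra and thus within the scope of \cref{different augmentations}. A generic cofibrant replacement of $U\g$ would lack this property, and the argument would break down.
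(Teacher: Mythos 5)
Your proof is correct and follows essentially the same route as the paper's: the paper likewise takes $U\CobarLC\BarLC\g \cong (\CobarAC\BarLC\g)^+$ as the apex, uses its cofibrancy (as a triangulated algebra in Hinich's model structure) to replace the zig-zag by a single unital quasi-isomorphism, and then corrects the augmentation by an automorphism via \cref{different augmentations}, which applies precisely because the apex is again an enveloping algebra. The only cosmetic difference is that you map the apex directly to $U\h$ rather than to $(\CobarAC\BarLC\h)^+$.
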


\begin{proof}
	If $\g$ is a dg Lie algebra, we have a natural quasi-isomorphism
	\[
	\CobarLC\BarLC\g\stackrel{\sim}{\longrightarrow}\g,
	\]
	given by the counit of the bar-cobar adjunction. By \cref{lemma:U preserves qi}, this gives a quasi-isomorphism of augmented associative algebras
	\[
	U\CobarLC\BarLC\g\stackrel{\sim}{\longrightarrow}U\g .
	\]
	Therefore, it is enough to show that $U\CobarLC\BarLC\g$ and $U\CobarLC\BarLC\h$ are quasi-isomorphic as augmented associative  algebras, and then by \cref{lemma:isom cobar U} it is enough to construct such a quasi-isomorphism between  $(\CobarAC\BarLC\g)^+$ and $(\CobarAC\BarLC\h)^+$.
	
	\medskip
	
	Now we already know that $(\CobarAC\BarLC\g)^+$ and $(\CobarAC\BarLC\h)^+$ are quasi-isomorphic as \emph{unital}  algebras, since we assumed that $U\g$ and $U\h$ were quasi-isomorphic. Moreover, we may in fact assume the existence of a quasi-isomorphism of unital dg algebras
	\[
	\phi:(\CobarAC\BarLC\g)^+\stackrel{\sim}{\longrightarrow}(\CobarAC\BarLC\h)^+
	\]
	(as opposed to a zig-zag of quasi-isomorphisms). Indeed,  $(\CobarAC\BarLC\g)^+$ is a \emph{triangulated} unital associative  algebra \cite[Appendix B.6.7]{lodayvallette}, for the same reason that any bar-cobar-resolution of an algebra is triangulated. Hence one can construct construct $\phi$ by induction on the depth of the corresponding filtration of $\BarLC\g$, with the requirement that the relevant triangle commutes at the level of homology. More generally one may note that triangulated algebras are bifibrant for the model structure on unbounded unital dg algebras constructed by Hinich \cite{hinichmodelstructure}.  
	
	\medskip
	
	Now the quasi-isomorphism $\phi$ has no reason to be compatible with the two augmentations on $(\CobarAC\BarLC\g)^+$ and $(\CobarAC\BarLC\h)^+$. However, by \cref{different augmentations} we may compose $\phi$ with an automorphism of $(\CobarAC\BarLC\g)^+$ to obtain a quasi-isomorphism which preserves the augmentations, which concludes the proof.
\end{proof}

\begin{prop}\label{Cg q-iso Ch}
	Let $\g$ and $\h$ be dg Lie algebras. Suppose that $U\g$ and $U\h$ are quasi-isomorphic as unital dg associative algebras. Then $\BarLC \g$ and $\BarLC \h$ are quasi-isomorphic as conilpotent \emph{coassociative} dg coalgebras. 
\end{prop}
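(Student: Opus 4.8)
The plan is to reduce the statement to the behaviour of the classical bar construction $\BarAC$ under quasi-isomorphisms of augmented associative algebras, by exhibiting a \emph{natural} quasi-isomorphism of coassociative dg coalgebras between the Chevalley--Eilenberg coalgebra $\BarLC\g$ and the bar construction $\BarAC U\g$ of the (augmented) universal enveloping algebra. Here $\BarLC\g$ is cocommutative, but we only remember its underlying coassociative coalgebra structure, and $\BarAC U\g$ denotes the reduced bar construction on the augmentation ideal $\overline{U\g}$. Granting such a natural comparison for both $\g$ and $\h$, the proposition follows at once: by \cref{lemma:UL qi as unital algebras then also as augmented} the algebras $U\g$ and $U\h$ are quasi-isomorphic as augmented associative algebras, hence their augmentation ideals are quasi-isomorphic as non-unital associative algebras; since $\BarAC$ preserves quasi-isomorphisms \cite[Proposition 11.2.3]{lodayvallette}, applying $\BarAC$ yields a zig-zag of coassociative coalgebra quasi-isomorphisms $\BarAC U\g \simeq \BarAC U\h$, and splicing it between the two comparison maps gives
\[
\BarLC\g \xrightarrow{\ \sim\ }\BarAC U\g \simeq \BarAC U\h \xleftarrow{\ \sim\ }\BarLC\h .
\]

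It remains to construct the comparison map, and here I would avoid invoking it as a black box, building it instead from the lemmas already at hand. First, since $\BarLC\g$ is a conilpotent coassociative dg coalgebra, the unit of the associative bar--cobar adjunction provides a quasi-isomorphism of coassociative coalgebras $\BarLC\g \xrightarrow{\sim}\BarAC\CobarAC\BarLC\g$ \cite[Corollary 11.3.5]{lodayvallette}. Next, \cref{lemma:isom cobar U} identifies $(\CobarAC\BarLC\g)^+ \cong U\CobarLC\BarLC\g$ as augmented associative algebras, while the counit $\CobarLC\BarLC\g \xrightarrow{\sim}\g$ of the Lie bar--cobar adjunction is a quasi-isomorphism of dg Lie algebras. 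Applying $U$, which preserves quasi-isomorphisms by \cref{lemma:U preserves qi}, we obtain a quasi-isomorphism of augmented associative algebras $(\CobarAC\BarLC\g)^+ \cong U\CobarLC\BarLC\g \xrightarrow{\sim} U\g$; passing to augmentation ideals this is a quasi-isomorphism of non-unital algebras $\CobarAC\BarLC\g \xrightarrow{\sim}\overline{U\g}$. Applying $\BarAC$ once more and composing with the unit yields the desired natural quasi-isomorphism of coassociative coalgebras
\[
\BarLC\g \xrightarrow{\sim}\BarAC\CobarAC\BarLC\g \xrightarrow{\sim}\BarAC U\g .
\]

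The genuine content of the argument is thus entirely contained in the lemmas of this section; the only points requiring care are bookkeeping ones. One must consistently track the passage between unital, augmented, and non-unital (respectively counital, coaugmented, and non-counital) conventions, so that the bar constructions are taken on the correct augmentation ideals and the maps produced are genuinely morphisms of coassociative coalgebras rather than merely of complexes. One must also keep in mind that $\BarLC\g$, though cocommutative and conilpotent, is fed into the \emph{associative} bar--cobar adjunction through its underlying coassociative structure; this is precisely what makes \cref{lemma:isom cobar U} applicable. No single step is a serious obstacle, and the proof is essentially formal once naturality and the conventions have been pinned down.
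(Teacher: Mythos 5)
Your proposal is correct and follows essentially the same route as the paper: both arguments upgrade the unital quasi-isomorphism to an augmented one via \cref{lemma:UL qi as unital algebras then also as augmented}, use the counit $\CobarLC\BarLC\g\xrightarrow{\sim}\g$ together with \cref{lemma:U preserves qi} and \cref{lemma:isom cobar U} to compare $\CobarAC\BarLC\g$ with the augmentation ideal of $U\g$, and then apply $\BarAC$ and the unit of the associative bar--cobar adjunction to conclude. The only cosmetic difference is that you package the argument as a natural comparison $\BarLC\g\xrightarrow{\sim}\BarAC U\g$ and splice zig-zags at $\BarAC U\g$, whereas the paper splices directly at $\CobarAC\BarLC\g$; the lemmas invoked and the logical content are identical.
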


\begin{proof}
	By \cref{lemma:UL qi as unital algebras then also as augmented}, $U\g$ and $U\h$ are also quasi-isomorphic as augmented associative algebras. Since $\g\simeq\CobarLC\BarLC \g$ for any dg Lie algebra and the universal enveloping algebra functor preserves quasi-isomorphisms by \cref{lemma:U preserves qi}, we have
	\[
	U\CobarLC\BarLC\g\simeq U\g\simeq U\h\simeq U\CobarLC\BarLC\h.
	\]
	Then by \cref{lemma:isom cobar U} we have $(\CobarAC\BarLC\g)^+\simeq(\CobarAC\BarLC\h)^+$ as augmented associative algebras, so that we also have $\CobarAC\BarLC\g\simeq\CobarAC\BarLC\h$ as non-unital associative algebras. We now apply the bar functor $\BarAC$ to get a string of quasi-isomorphisms of {coassociative} conilpotent coalgebras
	\[
	\BarLC\g\simeq\BarAC\CobarAC\BarLC\g\simeq\BarAC\CobarAC\BarLC\h\simeq\BarLC\h
	\]
	which implies the claim.
\end{proof}

\begin{para}
	We are now in a situation entirely dual to the one considered in Theorem \hyperref[thmA main]{A}. Indeed, we have the two conilpotent cocommutative coalgebras $\BarLC\g$ and $\BarLC\h$ which are quasi-isomorphic (in fact even weakly equivalent) as conilpotent coassociative coalgebras. If we wanted to prove that $\g \simeq \h$, then a potential approach would be to try to prove a dual version of Theorem \hyperref[thmA main]{A} implying that $\BarLC\g$ and $\BarLC\h$ are already quasi-isomorphic as \emph{cocommutative} coalgebras, and then apply the cobar functor $\CobarLC$ and hope to deduce the following string of quasi-isomorphisms
	\[
	\g\simeq\CobarLC\BarLC \g\simeq\CobarLC\BarLC \h\simeq \h.
	\]
	Unfortunately there are some obstacles involved in realizing this strategy. Although we will prove an analogue of Theorem \hyperref[thmA main]{A} for coalgebras (\cref{dualThmA}), we can not show in general that $\BarLC\g$ and $\BarLC\h$ are quasi-isomorphic as cocommutative coalgebras. Moreover, even if we could,  the cobar functor does not preserve quasi-isomorphisms in general \cite[Section 2.4]{lodayvallette}. Nevertheless a version of this idea does work to prove that $\hc\g \simeq \hc\h$. 
\end{para}

\begin{para}
	The rest of this section will be devoted to deducing Theorem \hyperref[thm B stated again]{B} from \cref{Cg q-iso Ch}. This will require proving \cref{dualThmA}, a ``dual'' form of Theorem \hyperref[thmA main]{A}, which forces us to work systematically with $\Coo$- and $\Aoo$-coalgebras. The theory of $\infty$-coalgebras is less developed and less standardized than the theory of $\infty$-algebras, and one can find multiple inequivalent definitions of $\Coo$- and $\Aoo$-coalgebra being used in the literature. In \cref{sect:background} we will give a more detailed background on $\Coo$- and $\Aoo$-coalgebras. For the moment, let us just state what definition we are using and what properties of $\Coo$- and $\Aoo$-coalgebras we need, and conclude the proof of Theorem \hyperref[thm B stated again]{B}.
\end{para}

\begin{para}
	We will first need to recall the notions of completion and homotopy completion; they will be redefined in greater generality in \cref{sect:homotopy complete}.
\end{para}

\begin{defn}
	\label{adic and lcs filtration} Let $A$ be a non-unital associative algebra. The \emph{adic filtration} of $A$ is the descending filtration $A=A^1 \supseteq A^2 \supseteq \ldots$, where $A^n$ is the ideal generated by $n$-fold products of elements of $A$. The \emph{completion} of $A$ is $A^\wedge \coloneqq \varprojlim A/A^n$.  Similarly, if $\g$ is a Lie algebra, then its \emph{completion} is $\g^\wedge \coloneqq \varprojlim \g/L^n\g$, where $L^n\g$ is the $n$th term in the \emph{lower central series} of $\g$, i.e.\ $L^1\g=\g$ and $L^n \g = [\g,L^{n-1}\g]$ for $n>1$.
\end{defn}

\begin{defn}
	The \emph{homotopy completion} of an associative algebra or Lie algebra is the completion of any cofibrant replacement of the algebra.
\end{defn}

\begin{para}
	We will need some background on $\infty$-coalgebras and their $\infty$-morphisms.	For the moment we will just state the definition of the types of coalgebras we need in this section. The general discussion is deferred to \cref{sect:background}.
\end{para}

\begin{defn}\label{def Coo coalgebra}
	A \emph{$\Coo$-coalgebra} is a chain complex $C$ together with a continuous square-zero derivation of degree $-1$ on $\lie(s^{-1}C)^\wedge$ (the completion of the free Lie algebra on the desuspension of $C$) whose linear term vanishes. Similarly, an \emph{$\Aoo$-coalgebra} is a chain complex $C$ with a continuous square-zero derivation of degree $-1$ on the completion of the tensor algebra on the desuspension of $C$ with vanishing linear term.
\end{defn}

\begin{para}\label{failure of rectification}
	Recall that in Theorem \hyperref[thmA main]{A} we used three key properties (1), (2) and (3) of $\Coo$- and $\Aoo$-algebras, see \S\cref{properties}. In the coalgebra case, the first two properties remain true \emph{mutatis mutandis}, but the third property is problematic: if two cocommutative coalgebras are quasi-isomorphic as $\Coo$-coalgebras, then there is no reason for them to also be quasi-isomorphic as cocommutative coalgebras.\footnote{We do not actually have an example where property (3) fails, so the statement should be interpreted merely as saying that the usual proof of property (3) for $\infty$-algebras breaks down for $\infty$-coalgebras.} However, a weaker form of (3) remains true. In order to state it, we need to introduce the following notion of completed cobar constructions.
\end{para}

\begin{defn}Let $C$ be a $\Coo$-coalgebra. The \emph{completed cobar construction} $\CobarLC^\wedge(C)$ is the dg Lie algebra given by  $\lie(s^{-1}C)^\wedge$, together with the differential obtained from the square-zero derivation of \cref{def Coo coalgebra}. If $C$ is an $\Aoo$-coalgebra we define similarly $\CobarAC^\wedge(C)$.
\end{defn}
\begin{para}
	The name ``completed cobar construction'' for the functors $\CobarLC^\wedge$ and $\CobarAC^\wedge$ is natural, since if $C$ is a conilpotent cocommutative dg coalgebra (resp.\ conilpotent coassociative coalgebra), then one has
		\[
	\CobarLC^\wedge(C) = \CobarLC(C)^\wedge\qquad\text{and}\qquad\CobarAC^\wedge(C) = \CobarAC(C)^\wedge.
	\]
\end{para}
	
\begin{para}
	Recall that the bar constructions $\BarLC$ and $\BarAC$ preserve quasi-isomorphisms, but that the cobar constructions $\CobarLC$ and $\CobarAC$ do not. One way to understand this asymmetry is in terms of the natural filtrations on the various complexes involved (see \cref{sect:homotopy complete} for our conventions on filtered complexes). The functors $\BarLC$ and $\BarAC$ take quasi-isomorphisms of Lie, respectively associative algebras to filtered quasi-isomorphisms of coalgebras, where the coalgebras are filtered by the coradical filtration. A nearly identical argument shows similarly that $\CobarLC$ and $\CobarAC$ take quasi-isomorphisms of coalgebras to filtered quasi-isomorphisms 
	of Lie, resp. associative algebras, where Lie algebras are filtered by their lower central series, and associative algebras have the adic filtration. However, a filtered quasi-isomorphism of chain complexes will typically not be a quasi-isomorphism: this is in general only true if the filtrations are \emph{exhaustive} and \emph{complete}, see \cref{firstfiltrationlemma}. The reason that $\BarLC$ and $\BarAC$ preserve quasi-isomorphisms is that the coradical filtrations of the bar constructions are exhaustive and bounded below (in particular complete), so that a filtered quasi-isomorphism with respect to the coradical filtrations is a quasi-isomorphism. And the reason that $\CobarLC$ and $\CobarAC$ fail to preserve quasi-isomorphisms is that the lower central series filtration (resp.\ the adic filtration) on the cobar constructions are instead bounded above (in particular exhaustive) but \emph{not} complete.
\end{para}

\begin{para}
	The reasoning in the previous paragraph, however, makes it clear why the completed cobar constructions should preserve quasi-isomorphisms: the reason that $\CobarLC$ and $\CobarAC$ do not preserve quasi-isomorphisms was precisely that the lower central series, resp.\ adic filtrations of the cobar constructions are not complete. If we complete with respect to these filtrations we should obtain functors which \emph{do} preserve quasi-isomorphisms. We record this as a proposition for the moment; a more general statement will be proven as \cref{complete cobar preserves q-iso}. 
\end{para}

\begin{prop}
	Let $C \to D$ be a quasi-isomorphism of coassociative coalgebras.
	\begin{itemize}
		\item[i)] The induced map $\CobarAC^\wedge C \to \CobarAC^\wedge D$ is a quasi-isomorphism.
		\item[ii)] If $C$ and $D$ are moreover cocommutative, then the induced map $\CobarLC^\wedge C \to \CobarLC^\wedge D$ is a quasi-isomorphism.
	\end{itemize}
\end{prop}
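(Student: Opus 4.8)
The plan is to realize both maps as \emph{filtered} quasi-isomorphisms and then upgrade them to honest quasi-isomorphisms using completeness. First I would recall the shape of the two cobar constructions together with their natural filtrations. As a graded object $\CobarAC C$ is the tensor algebra $T(s^{-1}C)$, and its differential is a sum $d_{\mathrm{int}}+d_\Delta$, where $d_{\mathrm{int}}$ is induced by the internal differential of $C$ and preserves word length, while $d_\Delta$ is induced by the coproduct of $C$ and raises word length by one. The adic filtration, whose $n$th term consists of words of length $\ge n$, is therefore a filtration by subcomplexes, and its associated graded is $\bigoplus_{n\ge1}(s^{-1}C)^{\otimes n}$ equipped only with the differential induced by $d_C$ (since $d_\Delta$ vanishes on the associated graded). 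In exactly the same way, for cocommutative $C$ the object $\CobarLC C$ is the free Lie algebra $\lie(s^{-1}C)$, its lower central series filtration is the filtration by bracket-length, and the associated graded is $\bigoplus_{n\ge1}\lie(s^{-1}C)^{(n)}$ with the differential induced by $d_C$, where $\lie(s^{-1}C)^{(n)}$ denotes the weight-$n$ component of the free Lie algebra.

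Next I would verify that a quasi-isomorphism $C\to D$ induces a quasi-isomorphism on each associated graded piece, so that $\CobarAC C\to\CobarAC D$ and $\CobarLC C\to\CobarLC D$ are filtered quasi-isomorphisms. For part (i) the relevant map is $(s^{-1}C)^{\otimes n}\to(s^{-1}D)^{\otimes n}$, which is a quasi-isomorphism by the Künneth theorem over the field $\K$. For part (ii), in characteristic zero the weight-$n$ component $\lie(s^{-1}C)^{(n)}$ is a natural direct summand of the tensor power $(s^{-1}C)^{\otimes n}$, cut out by the Dynkin idempotent; since taking tensor powers and natural retracts both preserve quasi-isomorphisms, the induced map on the associated graded is again a quasi-isomorphism.

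Finally I would pass to the completions. Completing a filtered complex $A$ with respect to its filtration leaves every subquotient $A^n/A^{n+1}$ unchanged, so the filtered quasi-isomorphisms on the associated graded persist after replacing $\CobarAC$ and $\CobarLC$ by $\CobarAC^\wedge$ and $\CobarLC^\wedge$. The point of the completion is that the adic and lower central series filtrations on $\CobarAC^\wedge C$ and $\CobarLC^\wedge C$ are now complete, while remaining exhaustive, which is exactly the hypothesis needed to conclude. Invoking \cref{firstfiltrationlemma}, which asserts that a filtered quasi-isomorphism between complete and exhaustive filtered complexes is a quasi-isomorphism, then gives the two desired statements.

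The hard part is precisely this last upgrade: a filtered quasi-isomorphism is in general far from being a quasi-isomorphism, and it is only the completeness of the filtration (together with exhaustiveness) that forces the associated spectral sequence to converge and the map to be an honest quasi-isomorphism. This is the whole reason the completed cobar functors are introduced, and it is isolated in \cref{firstfiltrationlemma}. The only other point requiring care is the characteristic-zero splitting of the free Lie algebra off the tensor algebra used in part (ii); without it one could not reduce the Lie associated graded to a functor manifestly preserving quasi-isomorphisms.
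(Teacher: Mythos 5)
Your proof is correct and follows essentially the same route as the paper, which establishes this proposition as a special case of \cref{complete cobar preserves q-iso}: filter the completed cobar construction by weight, observe that the associated graded map is $\mathrm{id}\otimes f_1^{\otimes p}$ on $\P^\antishriek(p)^\ast\otimes_{\mathbb S_p}C^{\otimes p}$ and hence a quasi-isomorphism, and conclude via \cref{firstfiltrationlemma} using that the filtration is exhaustive and complete. Your use of the Dynkin idempotent to handle the Lie associated graded is a fine (and slightly more explicit) substitute for the paper's implicit appeal to exactness of $\mathbb S_n$-coinvariants in characteristic zero.
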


\begin{para}\label{co-properties}
	We are now ready to state the properties of $\Coo$- and $\Aoo$-coalgebras we will need going forward. 
	\begin{enumerate}
		\item Any $\Coo$-coalgebra is $\Coo$-quasi-isomorphic to a \emph{minimal} $\Coo$-coalgebra, i.e.\ a $\Coo$-coalgebra with zero differential. (\cref{minimal coalgebras})
		\item If two $\Coo$-coalgebras $C$ and $D$ are quasi-isomorphic, then there exists a $\Coo$-quasi-isomorphism\footnote{As opposed to a zig-zag of quasi-isomorphisms.} $C \rightsquigarrow D$. (\cref{quasi-inverse coalgebras})
		\item If two cocommutative dg coalgebras $C$ and $D$ are quasi-isomorphic, then they are also $\Coo$-quasi-isomorphic. In the other direction, if $C$ and $D$ are $\Coo$-quasi-isomorphic, then $\CobarLC^\wedge C$ and $\CobarLC^\wedge D$ are quasi-isomorphic. (\cref{complete cobar preserves q-iso})
	\end{enumerate}
	Note that by the preceding proposition we may think of $\CobarLC^\wedge C \simeq \CobarLC^\wedge D$ as a weakened form of the statement $C \simeq D$. Similarly, $\Aoo$-coalgebras satisfy analogous properties (1'), (2') and (3'), where in (3') we use the completed cobar functor $\CobarAC^\wedge$. 
\end{para}

\begin{para}\label{codef cpx}
	We will also need the existence of \emph{deformation complexes} parametrizing the $\Aoo$- and $\Coo$-coalgebra structures on a given chain complex $V$. These are complete filtered graded dg Lie algebras whose Maurer--Cartan elements are $\Aoo$-coalgebra (resp.\ $\Coo$-coalgebra) structures on $V$, and whose gauge equivalences are $\Aoo$-isotopies (resp. $\Coo$-isotopies). They can be explicitly written as
	\[
	\coDef_{\Aoo}(V)\coloneqq\prod_{n \geq 2}\hom_{\mathbb{S}_n}\left(\susp\coass(n),\hom_\K(V,V^{\otimes n})\right)
	\]
	and
	\[
	\coDef_{\Coo}(V)\coloneqq\prod_{n \geq 2}\hom_{\mathbb{S}_n}\left(\susp\colie(n),\hom_\K(V,V^{\otimes n})\right),
	\]
	in complete analogy with the usual deformation complexes $\Def_{\Aoo}(V)$ and $\Def_{\Coo}(V)$. The fact that the Maurer--Cartan elements of these dg Lie algebras correspond to $\Aoo$- and $\Coo$-coalgebra structures on $V$ is equivalent to the statement that an $\Aoo$-coalgebra structure on $V$ is the same thing as an operadic twisting morphism from $\susp\coass$ to the coendomorphism operad $\coEnd_V$, and similarly a $\Coo$-coalgebra structure on $V$ is the same thing as an operadic twisting morphism from $\susp\colie$ to $\coEnd_V$. As for the usual deformation complex it is easy to identify the gauge equivalences between Maurer--Cartan elements with $\infty$-isotopies \emph{as a set}. What is less trivial is that this identification takes the composition of two gauge equivalences to the composition of the corresponding isotopies. This statement is the dual of \cite[Theorem 3]{preliedeformationtheory} and can be proven in exactly the same way.
\end{para}

\begin{para}
	We can now prove the following result, which is dual to Theorem \hyperref[thmA main]{A}.
\end{para}

\begin{thm}[Dual form of Theorem {\hyperref[thmA main]{A}}]
	\label{dualThmA} Let $C$ and $D$ be $\Coo$-coalgebras. Then $C$ and $D$ are quasi-isomorphic as $\Aoo$-coalgebras if and only if they are also quasi-isomorphic as $\Coo$-coalgebras.
\end{thm}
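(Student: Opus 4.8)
The plan is to dualize the proof of \cref{prop:reduced thm A} essentially word for word, working with the coalgebra deformation complexes $\coDef_{\Aoo}(V)$ and $\coDef_{\Coo}(V)$ of \S\ref{codef cpx} in place of $\Def_{\Aoo}(V)$ and $\Def_{\Coo}(V)$. One implication is immediate: a $\Coo$-coalgebra is in particular an $\Aoo$-coalgebra, and a $\Coo$-quasi-isomorphism is in particular an $\Aoo$-quasi-isomorphism (both facts being induced by the surjection of cooperads $\susp\coass \to \susp\colie$), so if $C$ and $D$ are $\Coo$-quasi-isomorphic then they are $\Aoo$-quasi-isomorphic. All of the content lies in the converse.

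For the converse, suppose $C$ and $D$ are quasi-isomorphic as $\Aoo$-coalgebras. I would first repeat the reductions of \S\ref{firstreduction p1}--\S\ref{firstreduction p2} in the dual setting: by property (1) of \S\ref{co-properties} we may take $C$ and $D$ to be minimal; by property (2') there is then a direct $\Aoo$-quasi-isomorphism $C \rightsquigarrow D$ whose linear component is an isomorphism of graded vector spaces; and transporting structure along this isomorphism reduces us to two $\Coo$-coalgebra structures on a single chain complex $V$ joined by an $\Aoo$-isotopy. By the description of the deformation complexes in \S\ref{codef cpx}, this says precisely that we have two Maurer--Cartan elements of $\coDef_{\Coo}(V)$ which are gauge equivalent in the larger dg Lie algebra $\coDef_{\Aoo}(V)$, and the goal is to show they are already gauge equivalent in $\coDef_{\Coo}(V)$. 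Producing such a gauge yields a $\Coo$-isotopy and hence, after undoing the transport of structure, a $\Coo$-quasi-isomorphism $C \rightsquigarrow D$, which is exactly what is required.

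This puts us in the hypothesis of \cref{retractionthm}, so it suffices to construct a filtration-preserving retraction $\coDef_{\Aoo}(V) \to \coDef_{\Coo}(V)$ that is a morphism of $\coDef_{\Coo}(V)$-modules. The inclusion $\coDef_{\Coo}(V) \hookrightarrow \coDef_{\Aoo}(V)$ is induced by the surjection $\susp\coass \to \susp\colie$, which exhibits $\susp\coass$ as an infinitesimal bicomodule over $\susp\colie$; dually to the end of the proof of \cref{prop:reduced thm A}, a morphism of infinitesimal bicomodules $\susp\colie \to \susp\coass$ induces a morphism of $\coDef_{\Coo}(V)$-modules $\coDef_{\Aoo}(V) \to \coDef_{\Coo}(V)$, which is a retraction precisely when the bicomodule map splits the surjection. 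Such a splitting is obtained by dualizing the morphism of infinitesimal $\lie$-bimodules $s \colon \ass \to \lie$ furnished by \cref{pbwcor}.

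I do not expect a genuinely new obstacle here: the substantive work has already been packaged into \cref{retractionthm} (the gauge-descent criterion), \cref{pbwcor} (the operadic splitting), and the coalgebra deformation theory of \S\ref{codef cpx}, in particular the nontrivial fact that gauge equivalences compose like $\infty$-isotopies. Given these, the argument is mechanical. The only point needing verification is that the convolution-module formalism of \S\ref{description of convolution dgla} applies with the coendomorphism operad $\coEnd_V$ in place of $\End_V$; since that formalism is purely formal in the target operad $\P$, the substitution is harmless and the module-morphism compatibility goes through unchanged. Hence the retraction exists, \cref{retractionthm} applies, and the two Maurer--Cartan elements are gauge equivalent in $\coDef_{\Coo}(V)$, completing the proof.
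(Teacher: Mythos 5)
Your proposal is correct and follows essentially the same route as the paper's own proof: reduce to minimal coalgebras and an $\Aoo$-isotopy via properties (1) and (2') of \S\ref{co-properties}, reinterpret as Maurer--Cartan elements of $\coDef_{\Coo}(H)$ gauge equivalent in $\coDef_{\Aoo}(H)$, build the filtered $\coDef_{\Coo}(H)$-module retraction by dualizing the splitting of \cref{pbwcor}, and conclude with \cref{retractionthm}. The paper's proof is exactly this argument, stated more tersely.
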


\begin{proof}
	By \S\cref{co-properties}(1) and (2') we may assume that $C$ and $D$ are minimal and that we have an $\Aoo$-quasi-isomorphism $f \colon C \rightsquigarrow D$. Since a quasi-isomorphism between chain complexes with vanishing differential is just an isomorphism, it follows that $f$ induces an isomorphism between the underlying graded vector spaces of $C$ and $D$. We can transport the $\Coo$-structure of one of the coalgebras along this isomorphism and reduce to the case where $C$ and $D$ are $\Coo$-coalgebras with the same underlying graded vector space $H$ that are linked by an $\Aoo$-morphism whose linear component is given by the identity, i.e.\ an $\Aoo$-isotopy.

	\medskip

	To finish the proof, we apply \cref{pbwcor} to the deformation complexes $\coDef_{\Aoo}(H)$ and $\coDef_{\Coo}(H)$ of $\Aoo$-coalgebra and $\Coo$-coalgebra structures on $H$. Indeed, by \S\ref{codef cpx} we now have two Maurer--Cartan elements of $\coDef_{\Coo}(H)$ which are gauge equivalent in the larger Lie algebra $\coDef_{\Aoo}(H)$. By applying \cref{pbwcor} in exactly the same way as in the proof of Theorem \hyperref[thmA main]{A} we see that there is a filtered retraction of $\coDef_{\Aoo}(H)$ onto $\coDef_{\Coo}(H)$, and applying \cref{retractionthm} as in the proof of Theorem \hyperref[thmA main]{A} yields the result. 
\end{proof}

\begin{para}
	Finally, we can put all of this together to conclude the proof of Theorem \hyperref[thm B stated again]{B}.
\end{para}

\begin{proof}[Proof of Theorem \text{\hyperref[thm B stated again]{B}}]
	Suppose that $\g$ and $\h$ are dg Lie algebras and that $U\g$ quasi-isomorphic to $U\h$. By \cref{Cg q-iso Ch}, we deduce that $\BarLC \g$ and $\BarLC \h$ are quasi-isomorphic as coassociative coalgebras, and in particular quasi-isomorphic as $\Aoo$-coalgebras. Therefore, by \cref{dualThmA} we have that $\BarLC \g$ and $\BarLC \h$ are also $\Coo$-quasi-isomorphic. By \S\ref{co-properties}(3) we deduce that $\CobarLC^\wedge \BarLC \g = (\CobarLC \BarLC \g)^\wedge$ and $\CobarLC^\wedge \BarLC \h = (\CobarLC \BarLC \h)^\wedge$ are quasi-isomorphic. But $\CobarLC \BarLC \g$ and $\CobarLC \BarLC \h$ are cofibrant replacements of $\g$ and $\h$, so saying that their completions are quasi-isomorphic to each other is exactly the same as saying that $\hc \g \simeq \hc \h$, concluding the proof. 
\end{proof}

\begin{rem}\label{remark: general koszul duality}
	There exists an analogue of Theorem \hyperref[thm B stated again]{B} for Lie coalgebras which is literally Koszul dual to Theorem \hyperref[thmA main]{A}, in the sense that the two statements are formally equivalent and interchanged by Koszul duality. Let $\P$ and $\Q$ be Koszul operads with Koszul dual cooperads $\P^\antishriek$ and $\Q^\antishriek$, and suppose we are given a commutative diagram 
	\[
	\begin{tikzcd}
		\P^\antishriek \arrow[r]\arrow[d]& \P\arrow[d] \\
		\Q^\antishriek \arrow[r]& \Q 
	\end{tikzcd}
	\]
	in which the horizontal arrows are the canonical Koszul twisting morphisms, and the vertical arrows are morphisms of cooperads and operads, respectively. There is then a commuting square of adjunctions
	\[
	\begin{tikzcd}
		\Picoalg \arrow[r,shift right]\arrow[d,shift right]& \Palg\arrow[d,shift right]\arrow[l,shift right] \\
		\Qicoalg \arrow[r,shift right]\arrow[u,shift right]& \Qalg \arrow[u,shift right]\arrow[l,shift right]
	\end{tikzcd}
	\]
	where $\Picoalg$ denotes the category of conilpotent $\P^\antishriek$-coalgebras, and similarly for $\Qicoalg$. The horizontal adjunctions are the bar-cobar adjunctions, the right vertical arrows are given by restriction and operadic pushforward, and the left vertical arrows are given by corestriction and cooperadic pullback. The fact that this square of adjunctions commutes merely means that the square of left adjoints (equivalently, the square of right adjoints) commutes, which is proven in much the same way as our \cref{lemma:isom cobar U}. The categories $\Palg$ and $\Qalg$ have model structures defined by Hinich \cite{hinichmodelstructure}, and these model structures may be transferred along the bar-cobar adjunctions to give model structures on conilpotent coalgebras for which the bar-cobar adjunctions are Quillen equivalences \cite[Theorem 2.1]{vallette}. In particular, a weak equivalence of coalgebras in this model structure is a morphism whose image under the cobar functor is a quasi-isomorphism. Under the resulting equivalences of categories
	\[
	\mathrm{Ho}(\Palg) \simeq \mathrm{Ho}(\Picoalg),\quad\mathrm{Ho}(\Qalg) \simeq \mathrm{Ho}(\Qicoalg)
	\]
	we obtain an identification of the restriction functor $\mathrm{Ho}(\Palg) \to \mathrm{Ho}(\Qalg)$ and the derived cooperadic pullback functor $\mathrm{Ho}(\Picoalg) \to \mathrm{Ho}(\Qicoalg)$. In this way any nontrivial theorem about the functor $\mathrm{Ho}(\Palg) \to \mathrm{Ho}(\Qalg)$ (e.g.\ that it reflects isomorphisms, as in our Theorem \hyperref[thmA main]{A}) can be equivalently restated as a theorem about coalgebras. We have for example the following:
	\begin{enumerate}[(i)]
		\item Theorem \hyperref[thmA main]{A} is equivalent to the statement that if $\mathfrak m$ and $\mathfrak n$ are conilpotent dg Lie coalgebras whose derived universal conilpotent coenveloping coalgebras are weakly equivalent, then $\mathfrak m$ and $\mathfrak n$ are themselves weakly equivalent. 
		\item The problem whether the universal enveloping algebra functor reflects quasi-isomorphisms is equivalent to the problem whether the forgetful functor from cocommutative conilpotent dg coalgebras to coassociative conilpotent dg coalgebras reflects weak equivalences.
	\end{enumerate}
	One might be tempted to try to modify our proof of Theorem \hyperref[thmA main]{A} to prove statement (ii) above. One stumbling block is that one would need an analogue of the deformation complex, which would be a dg Lie algebra whose Maurer--Cartan elements are conilpotent (or locally finite) $\infty$-coalgebra structures and whose gauges are locally finite $\infty$-isotopies which are moreover weak equivalences. It is far from clear {to us} that such an object even exists, given the indirect manner in which weak equivalences are defined.
\end{rem}

\section{Homotopy complete operadic algebras} \label{sect:homotopy complete}

\begin{para}
	The goal of this section is to prove that dg Lie algebras which are either non-negatively graded and nilpotent or negatively graded, are homotopy complete. We will in fact prove a result that applies to more general Koszul operads. Before going into the details, let us recall some well-known results about filtered complexes.
\end{para}

\subsec{Filtered complexes}

\begin{defn}
	A \emph{filtration} on a chain complex $V$ is a decreasing sequence of subspaces of $V$
	\[
	\cdots \supseteq \F^n V \supseteq \F^{n+1} V \supseteq \F^{n+2} V \supseteq \cdots
	\]
	such that $d(\F^n V) \subseteq \F^n V$  for all $n$. A morphism of filtered chain complexes $f:V \to W$ is a morphism of chain complexes which satisfies $f(\F^n V) \subseteq \F^n W$ for all $n$.
	The tensor product of two filtered chain complexes is itself a filtered chain complex via $F^n (V\otimes W) = \sum_{p+q=n} F^p V \otimes F^q W$.
\end{defn}

\begin{defn}\label{filtered-defn}
	Let $V$ be a filtered chain complex with filtration $\F^nV$. The filtration is called
	\begin{enumerate}
		\item \emph{exhaustive} if $\bigcup_n\F^nV = V$,
		\item \emph{complete} if the canonical map $V\to\varprojlim_n V/\F^nV$ is an isomorphism,
		\item \emph{bounded below}, respectively \emph{bounded above}, if for any homological degree $k$ there exists some $n$ such that $F^nV_k = 0$, respectively if $\F^nV_k = V_k$. 
	\end{enumerate}
	Note that bounded above implies exhaustive and bounded below implies complete. 
\end{defn}

\begin{rem}
	In our definition of bounded below and above we do \emph{not} insist that $F^nV = 0$ for some $n$, or that $F^nV=V$ for some $n$. These conditions are only imposed in each homological degree separately. This will be important e.g.\ when defining what it means for a dg Lie algebra to be nilpotent, in which case the correct condition is that the lower central series filtration is bounded below in the above sense.
\end{rem}

\begin{defn}\label{def filtered qi}
	Let $V$ be a filtered chain complex with filtration $\F^nV$. We denote $\Gr_\F^nV\coloneqq\F^nV/\F^{n+1}V$. A map of filtered chain complexes $V \to W$ is called a \emph{filtered quasi-isomorphism} if for all $n$ the induced map $\Gr_\F^n V \to \Gr_\F^n W$ is a quasi-isomorphism. 
\end{defn}

\begin{lem}\label{firstfiltrationlemma}
	Let $V \to W$ be a filtered quasi-isomorphism. Then the induced map \[
	\varprojlim_n \varinjlim_m F^mV/F^n V \to \varprojlim_n \varinjlim_m F^mW/F^n W
	\]
	is a quasi-isomorphism. In particular, a filtered quasi-isomorphism between exhaustive and complete filtered complexes is a quasi-isomorphism. 
\end{lem}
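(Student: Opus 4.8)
The plan is to construct the comparison map in three nested stages mirroring the iterated colimit and limit, treating first the finite quotients, then the exhausting colimit in $m$, and finally the completing limit in $n$. First I would show that for every pair $m \le n$ the induced map $\F^mV/\F^nV \to \F^mW/\F^nW$ is a quasi-isomorphism. The complex $\F^mV/\F^nV$ carries a finite filtration whose successive quotients are the graded pieces $\Gr_\F^p V$ for $m \le p < n$, each of which maps by a quasi-isomorphism to $\Gr_\F^p W$ by the hypothesis that $V \to W$ is a filtered quasi-isomorphism (\cref{def filtered qi}). Arguing by induction on $n-m$ and using the short exact sequences of complexes
\[
0 \to \F^{m+1}V/\F^nV \to \F^mV/\F^nV \to \Gr_\F^m V \to 0,
\]
the five lemma applied to the associated long exact sequences in homology yields the claim.

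Next I would pass to the colimit over $m \to -\infty$. Since filtered colimits of chain complexes are exact and commute with homology, the maps from the first step assemble into a quasi-isomorphism $\varinjlim_m \F^mV/\F^nV \to \varinjlim_m \F^mW/\F^nW$ for every fixed $n$. Writing $C_n^V$ and $C_n^W$ for these colimits, they organize into towers of complexes indexed by $n$, whose transition maps $C_{n+1}^V \to C_n^V$ are degreewise surjective, being colimits of the surjections $\F^mV/\F^{n+1}V \twoheadrightarrow \F^mV/\F^nV$.

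The main obstacle is the final passage to $\varprojlim_n$, since inverse limits are not exact and a levelwise quasi-isomorphism of towers need not induce a quasi-isomorphism on limits. Here I would invoke the Milnor exact sequence: because the towers $\{C_n^V\}$ and $\{C_n^W\}$ have degreewise surjective (hence degreewise Mittag--Leffler) transition maps, there are short exact sequences
\[
0 \to {\varprojlim_n}^1 H_{k+1}(C_n) \to H_k(\varprojlim_n C_n) \to \varprojlim_n H_k(C_n) \to 0
\]
for both $V$ and $W$. The previous step shows $C_n^V \to C_n^W$ is an isomorphism on homology for every $n$ and every $k$, so it induces isomorphisms on both $\varprojlim_n H_k(C_n)$ and ${\varprojlim_n}^1 H_{k+1}(C_n)$; the five lemma then forces $H_k(\varprojlim_n C_n^V) \to H_k(\varprojlim_n C_n^W)$ to be an isomorphism. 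This is precisely the assertion that $\varprojlim_n \varinjlim_m \F^mV/\F^nV \to \varprojlim_n \varinjlim_m \F^mW/\F^nW$ is a quasi-isomorphism.

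Finally, the ``in particular'' clause is immediate: if the filtrations are exhaustive then $\bigcup_m \F^mV = V$, so $\varinjlim_m \F^mV/\F^nV = V/\F^nV$, and if they are moreover complete then $\varprojlim_n V/\F^nV = V$. The displayed map thus degenerates to the original morphism $V \to W$, which is therefore a quasi-isomorphism.
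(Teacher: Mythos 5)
Your proof is correct and follows essentially the same route as the paper: first the finite quotients $\F^mV/\F^nV$ via induction and the five lemma, then exactness of filtered colimits, then the passage to the inverse limit using surjectivity of the transition maps. Your explicit invocation of the Milnor $\varprojlim^1$ sequence is just a more detailed packaging of the paper's observation that $\varprojlim$ is exact (equivalently, computes the homotopy limit) on towers of surjections.
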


\begin{proof}
	We prove first that $F^mV/F^nV \to F^mV/F^nV$ is a quasi-isomorphism for any $n\ge m$. For this we fix an arbitrary $m$ and prove it for all $n \geq m$ by induction on $n$, using the short exact sequence
	\[
	\begin{tikzcd}
		0 \arrow[r]&\arrow[r]\arrow[d] \operatorname{Gr}_F^{n} V &\arrow[r]\arrow[d] F^{m} V/F^{n+1}V &\arrow[r]\arrow[d] F^{m} V/F^{n}V  & 0 \\
		0 \arrow[r]&\arrow[r] \operatorname{Gr}_F^{n} W & \arrow[r]F^{m} W/F^{n+1}W &\arrow[r] F^{m} W/F^{n}W & 0
	\end{tikzcd}
	\]
	and the five lemma. The result then follows from this, since $\varinjlim$ is an exact functor in general and $\varprojlim$ is exact when restricted to inverse systems of surjections. The latter fact can be proven either by a diagram chase, or by arguing that such an inverse system is fibrant for the injective model structure on diagrams, so that the limit of the diagram is a homotopy limit.
\end{proof}

\begin{para}
	Above we considered only \emph{decreasing} filtrations. We will denote increasing filtrations by a subscript, according to the convention $F^p V = F_{-p}V$. In this manner everything said above applies equally well to increasing filtrations. This convention is the exact analogue of using subscripts and superscripts to switch between homological and cohomological indexing.
\end{para}

\subsec{Homotopy complete algebras}

\begin{para}
	For the rest of this section, we let $\P$ be a Koszul operad concentrated in homological degree $0$. Then $\P^\antishriek(n)$ is concentrated in homological degree $n-1$ for all $n \in \mathbf N$. Examples are $\P = \lie, \com$ or $\ass$.
\end{para}

\begin{defn}\label{def:operadic filtration}
	Let $A$ be a $\P$-algebra. The \emph{operadic filtration} of $A$ is the  descending filtration defined by
	\[
	F^n A \coloneqq \mathrm{Image}(\P(n) \otimes A^{\otimes n} \to A).
	\]
\end{defn}

\begin{example}
	For $\P=\lie$ and $\P=\ass$, the operadic filtration specializes to the lower central series filtration and the adic filtration considered in \cref{adic and lcs filtration}, respectively.
\end{example}

\begin{defn}
	A $\P$-algebra is said to be \emph{nilpotent}\footnote{This notion is also sometimes referred to as \emph{degree-wise nilpotent} in the literature.} if its operadic filtration is bounded below. This is the case, for instance, if the algebra is concentrated in strictly positive degrees or strictly negative degrees.
\end{defn}

\begin{para}
	We will now define the homotopy completion, which depends on the choice of a cofibrant replacement functor. We let $\QQ A$ be the cofibrant replacement of $A$ given by the bar-cobar resolution. As a graded vector space, $\QQ A$ can be written explicitly as
	\begin{align*}
		\QQ A &= \bigoplus_{n \geq 1} (\P \circ \P^\antishriek)(n) \otimes_{\mathbb S_n} A^{\otimes n} \\
		&= \bigoplus_{n \geq 1} \P(n) \otimes_{\mathbb S_n} \bigoplus_{k_1+\ldots+k_n=\ell} \mathrm{Ind}_{\mathbb S_{k_1} \times \ldots \times \mathbb S_{k_n}}^{\mathbb S_\ell} \left( \P^\antishriek(k_1) \otimes \ldots \otimes \P^\antishriek(k_n) \right) \otimes_{\mathbb S_\ell} A^{\otimes \ell}.
	\end{align*}

\end{para}

\begin{defn}
	The \emph{completion} of $A$ is $A^\wedge = \varprojlim A/F^nA$. The \emph{homotopy completion} of $A$ is $\hc A \coloneqq (\QQ A)^\wedge$ \cite{harperhess}, and $A$ is said to be \emph{homotopy complete} if $\QQ A \to (\QQ A)^\wedge$ is a quasi-isomorphism. \label{def:homotopy completion}
\end{defn} 

\begin{lem}
	Suppose that $A$ is concentrated in {strictly} positive or negative degrees. Then $\QQ A$ has the same property.
\end{lem}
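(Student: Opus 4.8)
The plan is to reduce the claim to a direct degree count on the explicit graded vector space underlying $\QQ A$ displayed above, using only two inputs recorded earlier: that $\P$ is concentrated in homological degree $0$, and that $\P^\antishriek(n)$ is concentrated in homological degree $n-1$. Since the property of being ``concentrated in strictly positive (resp.\ negative) degrees'' depends only on the underlying graded vector space, and since the differential lowers homological degree by one and hence cannot carry an element out of a range of the form $\{\,\cdot \geq 1\,\}$ or $\{\,\cdot \leq -1\,\}$, it suffices to bound the degree of each summand
\[
\P(n) \otimes_{\mathbb S_n} \mathrm{Ind}_{\mathbb S_{k_1} \times \cdots \times \mathbb S_{k_n}}^{\mathbb S_\ell}\bigl(\P^\antishriek(k_1) \otimes \cdots \otimes \P^\antishriek(k_n)\bigr) \otimes_{\mathbb S_\ell} A^{\otimes \ell},
\]
where $n \geq 1$ and $k_1 + \cdots + k_n = \ell$.

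First I would record the elementary constraint $\ell \geq n$: since $\P^\antishriek$ is a reduced cooperad we have $\P^\antishriek(0)=0$, so every $k_i \geq 1$ and hence $\ell = \sum_i k_i \geq n \geq 1$. Next I would tally the degrees contributed by each tensor factor, noting that induction and passage to (co)invariants are degree-preserving. The factor $\P(n)$ sits in degree $0$, and the factors $\P^\antishriek(k_i)$ contribute total degree $\sum_{i=1}^n (k_i - 1) = \ell - n$.

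Suppose now that $A$ is concentrated in strictly positive degrees, i.e.\ $A_j = 0$ for $j \leq 0$. Then $A^{\otimes \ell}$ is concentrated in degrees $\geq \ell$, so the displayed summand lives in degrees at least $(\ell - n) + \ell = 2\ell - n \geq \ell \geq 1$, where the first inequality uses $\ell \geq n$. Thus every summand, and therefore $\QQ A$, is concentrated in strictly positive degrees. The strictly negative case is entirely symmetric: if $A_j = 0$ for $j \geq 0$ then $A^{\otimes \ell}$ lives in degrees $\leq -\ell$, so each summand sits in degrees at most $(\ell - n) - \ell = -n \leq -1$, whence $\QQ A$ is concentrated in strictly negative degrees.

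The argument is essentially pure bookkeeping, so there is no serious obstacle; the only point demanding any attention is the bound $\ell \geq n$, which is what rules out a negative contribution from summands with many cobar factors and is ultimately responsible for the degree shift landing on the correct side of zero. Everything else follows from the stated degree conventions on $\P$ and $\P^\antishriek$ together with the additivity of degree under tensor products.
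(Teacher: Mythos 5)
Your proof is correct and takes essentially the same approach as the paper's: a direct degree count on each summand of the explicit expression for $\QQ A$, using that $\P$ is concentrated in degree $0$ and $\P^\antishriek(k)$ in degree $k-1$, so that the cooperadic factors contribute $\ell-n\ge 0$. The only difference is that you spell out the strictly positive case via the bound $\ell\ge n$, where the paper simply declares it obvious.
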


\begin{proof}We consider each summand in the explicit expression for $\QQ A$ separately. Suppose that $A$ is concentrated in negative degrees. Then $A^{\otimes \ell}$ is concentrated in degrees at most $-\ell$. If $k_1+\ldots+k_n=\ell$ then $\P^\antishriek(k_1) \otimes \ldots \otimes \P^\antishriek(k_n)$ has degree $\ell-n$. So their tensor product has degree at most $-n$. The case when $A$ is positively graded is obvious since both $A$ and $\P^\antishriek$ are concentrated in positive degrees. \end{proof}

\begin{prop}
	Let $A$ be {strictly} positively or negatively graded. Then $A$ is homotopy complete.
\end{prop}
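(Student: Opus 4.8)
The plan is to prove that the cofibrant replacement $\QQ A$ already agrees with its own completion, by showing that its operadic filtration is bounded below in each homological degree, hence complete. Once this is established, \cref{filtered-defn} gives that the canonical map $\QQ A \to \varprojlim_n \QQ A/\F^n\QQ A = (\QQ A)^\wedge = \hc A$ is an isomorphism, and in particular a quasi-isomorphism, which is exactly the assertion that $A$ is homotopy complete in the sense of \cref{def:homotopy completion}.

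First I would identify the operadic filtration with the word-length filtration. As a graded $\P$-algebra (forgetting the twisting differential), $\QQ A$ is the free $\P$-algebra on the graded vector space $W \coloneqq \bigoplus_{k} \P^\antishriek(k)\otimes_{\mathbb{S}_k} A^{\otimes k}$ appearing in the explicit description of $\QQ A$. The operadic filtration is then controlled by word length: if $\gamma \in \P(m)$ and $a_1,\dots,a_m$ lie in summands of word lengths $n_1,\dots,n_m \geq 1$, then $\gamma(a_1,\dots,a_m)$ has word length $\sum_i n_i \geq m$, so that
\[
\F^m \QQ A \subseteq \bigoplus_{n \geq m} \P(n)\otimes_{\mathbb{S}_n} W^{\otimes n}.
\]
Next I would make the degree bookkeeping of the preceding lemma quantitative. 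Since $\P$ is concentrated in degree $0$ and $\P^\antishriek(k)$ in degree $k-1$, a summand $\P(n)\otimes(\P^\antishriek(k_1)\otimes\cdots\otimes\P^\antishriek(k_n))\otimes A^{\otimes \ell}$ with $k_1+\cdots+k_n=\ell$ carries degree $(\ell-n)+\deg(A^{\otimes\ell})$. If $A$ is strictly negatively graded then $\deg(A^{\otimes\ell})\leq -\ell$ and the summand lies in degrees $\leq -n$; if $A$ is strictly positively graded then $\ell \geq n$ and $\deg(A^{\otimes\ell})\geq \ell$, so the summand lies in degrees $\geq n$.

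In either case, for a fixed homological degree $k$ only finitely many word lengths $n$ can contribute, so the right-hand side of the displayed inclusion — and a fortiori $\F^m\QQ A$ — vanishes in degree $k$ once $m$ is large. Thus the operadic filtration on $\QQ A$ is bounded below in each homological degree, hence complete, and the conclusion follows as described above. I do not expect a serious obstacle: the only points requiring care are the containment of the operadic filtration in the word-length filtration and the degree estimate separating the contributions of different word lengths, and the latter is just the quantitative form of the preceding lemma. The one thing to keep in mind is that \emph{both} the completeness of the filtration and the vanishing of $\F^m\QQ A$ in a fixed degree are statements in each homological degree separately, matching the degreewise conventions of \cref{filtered-defn}.
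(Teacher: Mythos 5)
Your argument is correct and is essentially the paper's own proof: the paper deduces from the same degree count that $\QQ A$ is itself strictly positively or negatively graded, hence nilpotent (operadic filtration bounded below degreewise), hence equal to its completion, while you make the identical bookkeeping quantitative via the word-length filtration on the free $\P$-algebra. The containment of the operadic filtration in the word-length filtration and the estimate placing word length $n$ in degrees $\leq -n$ (resp.\ $\geq n$) are both sound, so no gap.
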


\begin{proof}
	By the previous lemma, $\QQ A$ is nilpotent. Therefore, $\QQ A = (\QQ A)^\wedge$. 
\end{proof}

\begin{para}
	By a more careful argument we can also treat the case where $A$ is non-negatively graded and nilpotent. We will need to consider multiple distinct filtrations on $\QQ A$. We denote by $G$ the operadic filtration of $\QQ A$, which we can write explicitly as
	\[
	G^p \QQ A = \bigoplus_{n \geq p} \P(n) \otimes_{\mathbb S_n} \bigoplus_{k_1+\ldots+k_n=\ell} \mathrm{Ind}_{\mathbb S_{k_1} \times \ldots \times \mathbb S_{k_n}}^{\mathbb S_\ell} \left( \P^\antishriek(k_1) \otimes \ldots \otimes \P^\antishriek(k_n) \right) \otimes_{\mathbb S_\ell} A^{\otimes \ell}.
	\]
	The operadic filtration of $A$, which we denote by an $F$, induces a second filtration on $\QQ A$. Informally, we are just using the natural tensor product filtration on all the tensor powers $A^{\otimes n}$, and $\P$ and $\P^\antishriek$ are given the trivial filtration, so that
	\[
	F^p \QQ A = \bigoplus_{n \geq 1} (\P \circ \P^\antishriek)(n) \otimes_{\mathbb S_n} F^p (A^{\otimes n}).
	\]
	We refer to this filtration as the \emph{$F$-filtration}, and to the operadic filtration of $\QQ A$ as the \emph{$G$-filtration}. 
\end{para}

\begin{lem}\label{QA to A is a filtered qi}
	The map $\QQ A \to A$ is a filtered quasi-isomorphism with respect to the $F$-filtrations.
\end{lem}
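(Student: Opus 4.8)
The plan is to prove the lemma by passing to the associated graded of the $F$-filtration on both sides and recognizing the induced map as the canonical bar--cobar resolution of a single $\P$-algebra, which is automatically a quasi-isomorphism. The crucial feature of the operadic filtration $F^\bullet A$ that makes this work is that it is \emph{multiplicative}: by operadic associativity, if $\theta \in \P(k)$ and $a_i \in F^{p_i}A$, then writing each $a_i$ as a $p_i$-fold product shows $\gamma(\theta; a_1, \ldots, a_k) \in F^{p_1 + \cdots + p_k}A$, since $\theta \circ (\theta_1, \ldots, \theta_k) \in \P(p_1 + \cdots + p_k)$. In particular $\Gr_F A = \bigoplus_{p \geq 1}\Gr_F^p A$ is itself a $\P$-algebra, graded by the filtration weight $p$, with products $\Gr_F^p A \otimes \Gr_F^q A \to \Gr_F^{p+q}A$.

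The heart of the argument is the identification of $\Gr_F \QQ A$ with $\QQ(\Gr_F A)$ as dg vector spaces carrying an extra weight grading. Since $\Gr_F$ is monoidal over a field (the associated graded of a tensor product of filtered vector spaces is the tensor product of the associated gradeds), and since $\P$ and $\P^\antishriek$ carry the trivial $F$-filtration, one gets a weight-graded isomorphism of underlying graded vector spaces
\[
\Gr_F \QQ A \cong \bigoplus_{n \geq 1}(\P \circ \P^\antishriek)(n)\otimes_{\mathbb S_n}(\Gr_F A)^{\otimes n}.
\]
What requires checking is that the bar--cobar differential of $\QQ A$ preserves the $F$-filtration and that its associated graded is exactly the bar--cobar differential of the $\P$-algebra $\Gr_F A$. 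Here every term preserves the $F$-weight: the internal differential of $A$ does, because $d_A(F^pA)\subseteq F^pA$; the terms built from the (co)operadic combinatorics of $\P$ and $\P^\antishriek$ act only on those factors and leave the $A$-inputs untouched, hence preserve weight; and the one remaining term, which applies a $\P$-operation to several $A$-inputs through the Koszul twisting morphism, preserves weight by multiplicativity and descends on $\Gr_F$ to the product of $\Gr_F A$. Thus $\Gr_F \QQ A = \QQ(\Gr_F A)$ as dg vector spaces.

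To conclude, I would note that the counit $\QQ A \to A$ preserves the $F$-filtration (it projects onto the summand with a single $A$-input, which preserves weight), and that under the identification above its associated graded is precisely the bar--cobar counit $\QQ(\Gr_F A) \to \Gr_F A$. This is a quasi-isomorphism because bar--cobar resolutions of $\P$-algebras always are, and it respects the weight grading; since homology commutes with the direct sum over weights, it is a quasi-isomorphism in each weight $p$, i.e.\ $\Gr_F^p \QQ A \to \Gr_F^p A$ is a quasi-isomorphism for all $p$. This is exactly the assertion that $\QQ A \to A$ is a filtered quasi-isomorphism. The step demanding the most care is the middle one --- verifying that the full bar--cobar differential is $F$-filtered and that its associated graded reassembles into the bar--cobar differential of $\Gr_F A$ --- and this rests entirely on the multiplicativity of the operadic filtration.
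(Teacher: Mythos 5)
Your argument is correct, and it takes a more structural route than the paper's. The paper never names $\Gr_F A$ as a $\P$-algebra: it runs a second, increasing filtration $L$ (by the number of $A$-inputs) on each $\Gr_F^p$, computes $\operatorname{Gr}^L_q\operatorname{Gr}_F^p \QQ A \cong (\P\circ\P^\antishriek)(q)\otimes_{\mathbb S_q}\operatorname{Gr}_F^p(A^{\otimes q})$, and concludes from the acyclicity of the Koszul complex $(\P\circ_\kappa\P^\antishriek)(q)$ for $q>1$ --- i.e.\ it reruns, fibrewise in the $F$-weight, the standard proof that the bar--cobar counit is a quasi-isomorphism. You instead make the multiplicativity of the operadic filtration explicit, observe that $\Gr_F A$ is itself a $\P$-algebra and that $\Gr_F\QQ A \cong \QQ(\Gr_F A)$ compatibly with differentials and counits, and then quote the counit quasi-isomorphism as a black box. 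When unwound, your black box is proved by exactly the paper's $L$-filtration argument, so the underlying mathematics coincides; what your version buys is conceptual clarity (in particular it surfaces the multiplicativity of $F$, which the paper uses silently when it treats $\Gr_F^p\QQ A$ as a complex), at the cost of having to verify the identification of differentials, which you rightly flag as the delicate step. One small inaccuracy: the counit $\QQ A\to A$ is \emph{not} merely the projection onto the single-$A$-input summand --- it is the $\P$-algebra map $\P\circ\P^\antishriek\circ A\to\P\circ A\xrightarrow{\gamma_A}A$, which applies genuine $\P$-operations to tuples of elements of $A$. This does not damage your proof, since the multiplicativity you established is precisely what shows this map preserves the $F$-filtration and induces the counit of $\QQ(\Gr_F A)$ on associated graded, but the parenthetical justification should be corrected accordingly.
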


\begin{proof}
	The proof is more or less identical to the usual proof that $\QQ A \to A$ is a quasi-isomorphism. Let us first recall this proof. We give $\QQ A$ a third filtration, which is the increasing filtration defined by
	\[
	L_p \QQ A \coloneqq \bigoplus_{n = 1}^p (\P \circ \P^\antishriek)(n) \otimes_{\mathbb S_n} A^{\otimes n}.
	\]
	We also define an increasing filtration on $A$ by $L_0A \coloneqq 0$ and $L_1 A \coloneqq A$. Since the $L$-filtrations are bounded below and exhaustive, it will be enough to show that $\QQ A \to A$ is a filtered quasi-isomorphism with respect to the $L$-filtrations. But we have
	\[
	\operatorname{Gr}^L_p \QQ A \cong (\P \circ \P^\antishriek)(p) \otimes_{\mathbb S_p} A^{\otimes p}.
	\]
	Now $(\P \circ \P^\antishriek)(p)$ is acyclic for $p>1$ since $\P$ is Koszul, and $(\P \circ \P^\antishriek)(1) \cong \K$. Therefore, the morphism $\operatorname{Gr}^L_p \QQ A \to \operatorname{Gr}^L_p A$ is a quasi-isomorphism for all $p$ as claimed.

	\medskip

	We now want to see that $\QQ A \to A$ is a filtered quasi-isomorphism with respect to the $F$-filtrations, i.e.\ that $\operatorname{Gr}_F^p \QQ A \to \operatorname{Gr}_F^p A$ is a quasi-isomorphism for all $p$. Again we use the $L$-filtration which is bounded below and exhaustive, so that it is enough that $\operatorname{Gr}^L_q \operatorname{Gr}_F^p \QQ A \to \operatorname{Gr}^L_q\operatorname{Gr}_F^p A$ is a quasi-isomorphism for all $p$ and $q$. But now we have
	\[
	\operatorname{Gr}^L_q\operatorname{Gr}_F^p A \cong
	\begin{cases}
		\operatorname{Gr}_F^p A & q = 1 \\ 0 & q \neq 1,
	\end{cases}
	\]
	and also $ \operatorname{Gr}^L_q \operatorname{Gr}_F^p\QQ A \cong (\P \circ \P^\antishriek)(q) \otimes_{\mathbb S_q} \operatorname{Gr}_F^p(A^{\otimes q}).$ 
	But by the same argument as in the previous paragraph this is  acyclic for $q \neq 1$ and isomorphic to $\operatorname{Gr}_F^p A$ when $q=1$, finishing the proof. 
\end{proof}

\begin{defn}
	Let $V$ be a chain complex with two filtrations $F$ and $G$. We say that $F$ and $G$ are \emph{commensurable} if for all homological degrees $k$ and for all $p$, there exists a $q$ such that $F^p V_k \supseteq G^q V_k$, and vice versa. 
\end{defn}

\begin{lem}
	Let $V$ be a chain complex with two commensurable filtrations $F$ and $G$. Then $\varprojlim V/F^nV$ is isomorphic to $\varprojlim V/G^nV$.
\end{lem}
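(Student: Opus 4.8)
The plan is to realize both one-parameter completions as iterated limits of a single two-parameter inverse system, and then interchange the order of the limits. Concretely, I would consider the inverse system $\{V/(F^mV+G^nV)\}_{(m,n)}$, indexed by $\mathbf N\times\mathbf N$ with the evident projections. Since an inverse limit over a product poset may be computed as an iterated limit in either order, it suffices to prove the two ``collapse'' statements
\[
\varprojlim_n V/(F^mV+G^nV)\cong V/F^mV \quad\text{for each fixed }m, \qquad \varprojlim_m V/(F^mV+G^nV)\cong V/G^nV \quad\text{for each fixed }n.
\]
Granting these and interchanging the limits yields
\[
\varprojlim_m V/F^mV \;\cong\; \varprojlim_{m,n} V/(F^mV+G^nV) \;\cong\; \varprojlim_n V/G^nV,
\]
and the resulting composite is canonical, hence automatically a map of chain complexes since every map involved is.

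The heart of the matter is the first collapse. For fixed $m$, the left-hand side is by definition the completion of $V/F^mV$ with respect to the induced filtration $\overline G^n\coloneqq(F^mV+G^nV)/F^mV$. I claim this induced filtration is \emph{bounded below}, so that $V/F^mV$ is already complete for it, by the implication ``bounded below $\Rightarrow$ complete'' recorded in \cref{filtered-defn}; this gives $\varprojlim_n V/(F^mV+G^nV)\cong V/F^mV$ immediately. To prove the claim, observe that $V/F^mV$ is a successive extension of the graded pieces $\operatorname{Gr}^p_F V$ with $p<m$, and that in each homological degree only finitely many of these are nonzero (this is where exhaustiveness of $F$ is used degreewise). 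By the commensurability hypothesis, the induced $G$-filtration on each $\operatorname{Gr}^p_F V$ is bounded below. An extension of finitely many bounded-below filtered complexes is again bounded below — concretely, if $G^{N}$ kills the top subquotient and $G^{N'}$ kills the induced filtration on the kernel, then $G^{N+N'}$ lands in that kernel and is then killed — so an induction over the finitely many nonzero pieces $\operatorname{Gr}^p_F V$ shows $\overline G$ is bounded below on $V/F^mV$, as claimed.

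The symmetric collapse $\varprojlim_m V/(F^mV+G^nV)\cong V/G^nV$ follows verbatim after exchanging the roles of $F$ and $G$, now invoking the other half of the commensurability hypothesis. The main obstacle, and the only place commensurability is genuinely needed, is precisely the completeness of the quotient $V/F^mV$ for the induced $G$-filtration established above: without the uniform bound (in each degree) on where $G$ annihilates the successive pieces $\operatorname{Gr}^p_F V$, the Cauchy sequences representing elements of $\varprojlim_n V/(F^mV+G^nV)$ would not be forced to converge inside $V/F^mV$, and the collapse would fail. Everything else is the formal Fubini-type manipulation of iterated inverse limits.
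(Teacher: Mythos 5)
Your overall strategy --- the two-parameter system $V/(F^mV+G^nV)$, Fubini for iterated inverse limits, and the two collapse statements --- is a genuinely different route from the paper's, which instead argues directly that in each homological degree $k$ the two filtrations are cofinal in one another (for all $p$ there is $q$ with $G^qV_k\subseteq F^pV_k$, and vice versa) and concludes by cofinality of index categories. Your reduction to the single statement ``the induced $G$-filtration on $V/F^mV$ is bounded below'' is clean, and your extension argument for propagating bounded-belowness through finitely many graded pieces is correct.

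The gap is the claim that exhaustiveness of $F$ forces only finitely many of the $\operatorname{Gr}^p_F V$ with $p<m$ to be nonzero in a fixed homological degree $k$. That finiteness is precisely the condition that $F$ be \emph{bounded above} in the sense of \cref{filtered-defn} (some $F^nV_k=V_k$), which is strictly stronger than exhaustive --- the paper records only the implication ``bounded above $\Rightarrow$ exhaustive''. Concretely, take $V=\bigoplus_{n\in\mathbf Z}\K e_n$ in a single degree with $F^pV=\bigoplus_{n\geq p}\K e_n$ and $G^qV=\bigoplus_{|n|\geq q}\K e_n$ for $q\geq 1$ (and $G^qV=V$ for $q\leq 0$). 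Both filtrations are exhaustive and one checks they are commensurable, yet infinitely many $\operatorname{Gr}^p_FV$ are nonzero in that degree, the induced filtration $\overline G{}^q=\bigoplus_{n\leq -q}\K e_n$ on $V/F^1V=\bigoplus_{n\leq 0}\K e_n$ is not bounded below, and the collapse genuinely fails: $\varprojlim_q V/(F^1V+G^qV)=\prod_{n\leq 0}\K e_n\neq V/F^1V$. (The two completions themselves also disagree here, so this is not a defect you can route around while keeping only ``exhaustive''; the paper's own one-line cofinality claim needs the same strengthening, since the induction producing $G^qV_k\subseteq F^pV_k$ has no base case without a $p_0$ with $F^{p_0}V_k=V_k$.) The fix is to assume each filtration is degreewise bounded above --- which holds in the only application, where the operadic filtrations satisfy $F^1V=G^1V=V$ --- and then your induction starts at the degreewise top of the $F$-filtration and runs through the finitely many nonzero pieces exactly as you describe.
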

\begin{proof}
	Clear by a standard cofinality argument. 
\end{proof}

\begin{lem}
	Suppose that $F$ and $G$ are bounded above filtrations on a chain complex $V$, such that the filtration on $\operatorname{Gr}^p_F(V)$ induced by $G$ is bounded below for all $p$, and the filtration on $\operatorname{Gr}^p_G(V)$ induced by $F$ is bounded below for all $p$. Then $F$ and $G$ are commensurable.
\end{lem}

\begin{proof}
	Fix a homological degree $k$. We need to show that for all $p$ there exists $q$ such that $F^p V_k \supseteq G^q V_k$. We prove this by induction on $p$, the base case being clear since $F^p V_k=V_k$ for $p\ll 0$. Now note that if $q$ is chosen large enough so that both $F^{p-1} V_k \supseteq G^q V_k$ (possible by induction) and also $G^q \operatorname{Gr}^p_F(V_k) = 0$ (possible by hypothesis), then $F^p V_k \supseteq G^q V_k$. 
\end{proof}

\begin{lem}
	Suppose that $A$ is concentrated in nonnegative degrees and nilpotent. Then the $F$-filtration and the $G$-filtration of $\QQ A$ are commensurable. 
\end{lem}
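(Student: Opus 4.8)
The plan is to verify the two requirements in the definition of commensurability directly: that the $G$-filtration induced on each $\Gr_F^p\QQ A$ is bounded below, and that the $F$-filtration induced on each $\Gr_G^q\QQ A$ is bounded below. Everything rests on one piece of degree bookkeeping and one structural observation. First I would record that in the summand of $\QQ A$ indexed by the outer arity $n$, by a partition $k_1+\cdots+k_n=\ell$, and by a tensor $A^{\otimes\ell}$ of internal degree $d$, the homological degree equals $(\ell-n)+d$: indeed $\P^\antishriek(k_1)\otimes\cdots\otimes\P^\antishriek(k_n)$ sits in degree $\sum_i(k_i-1)=\ell-n$, the factor $\P(n)$ is in degree $0$, and $A^{\otimes\ell}$ contributes $d\geq 0$ since $A$ is nonnegatively graded. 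Second, since $F^1A=A$, the tensor-product filtration satisfies $F^p(A^{\otimes\ell})=A^{\otimes\ell}$ for every $p\leq\ell$, so $\Gr_F^p(A^{\otimes\ell})=0$ unless $\ell\leq p$.

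For the first requirement, the structural observation gives the bound at once. A nonzero summand of $\Gr_F^p\QQ A$ must have $\ell\leq p$, and since each $k_i\geq 1$ forces $n\leq\ell$, every such summand has outer arity $n\leq p$. As the $G$-filtration retains exactly the summands of outer arity $\geq q$, this yields $G^{q}(\Gr_F^p\QQ A)=0$ for all $q>p$; the induced $G$-filtration is therefore not merely bounded below but bounded. Note that this half of the argument uses neither the grading nor the nilpotency of $A$.

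The second requirement is where I expect the real work, and it is here that both hypotheses enter. Fix a homological degree $k$ and work inside $\Gr_G^q\QQ A$, the summand of outer arity $n=q$. By the degree count an element of degree $k$ satisfies $\ell-q+d=k$ with $d\geq 0$, so $q\leq\ell\leq k+q$: only finitely many internal arities $\ell$ occur, and for each the internal degree $d=k-\ell+q$ is pinned down. Because $A$ is nonnegatively graded, the degree-$d$ part of $A^{\otimes\ell}$ is a finite direct sum, over decompositions $d_1+\cdots+d_\ell=d$ with $d_i\geq 0$, of the spaces $A_{d_1}\otimes\cdots\otimes A_{d_\ell}$. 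Now nilpotency is used: the operadic filtration of $A$ is bounded below, so for each degree $j$ there is an integer $M_j$ with $F^{M_j}A_j=0$. For a fixed decomposition, a summand $F^{N_1}A_{d_1}\otimes\cdots\otimes F^{N_\ell}A_{d_\ell}$ can be nonzero only if $N_i<M_{d_i}$ for all $i$, hence vanishes as soon as $N=N_1+\cdots+N_\ell\geq\sum_i M_{d_i}$. Taking the maximum of $\sum_i M_{d_i}$ over the finitely many admissible $\ell$ and the finitely many degree decompositions produces an $N$ with $F^{N}(\Gr_G^q\QQ A)_k=0$, which is exactly boundedness below of the induced $F$-filtration.

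The main obstacle is precisely securing this finiteness in the second requirement: without the nonnegative grading one could neither bound $\ell$ in terms of $k$ nor reduce to finitely many degree decompositions of $A^{\otimes\ell}$, and without nilpotency the degreewise vanishing thresholds $M_j$ would not exist. It is the combination of these two inputs — grading to guarantee finiteness and nilpotency to guarantee degreewise vanishing — that forces commensurability of the $F$- and $G$-filtrations on $\QQ A$.
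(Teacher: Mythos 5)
Your proof is correct and follows essentially the same route as the paper's: the $G$-filtration on $\operatorname{Gr}_F^p\QQ A$ is bounded because $\operatorname{Gr}_F^p(A^{\otimes\ell})=0$ for $\ell>p$ (no hypotheses on $A$ needed), while for the $F$-filtration on $\operatorname{Gr}_G^q\QQ A$ the nonnegative grading bounds the arities contributing in a fixed homological degree and nilpotency supplies the degreewise vanishing thresholds. Your version merely spells out in more detail the paper's one-line claims that only finitely many terms survive in each degree and that the $F$-filtration on $A^{\otimes\ell}$ is bounded below.
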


\begin{proof}
	Notice that since $A$ is nilpotent and non-negatively graded, the $F$-filtration on $A^{\otimes \ell}$ is bounded below for any $\ell$. Let us consider the expression
	\[
	\operatorname{Gr}_G^p \QQ A \cong \bigoplus_{\ell \geq p} \text{( some $\mathbb S_\ell$-representation concentrated in degree $\ell-p$ )} \otimes_{\mathbb S_\ell} A^{\otimes \ell}
	\]
	in a fixed homological degree $k$. Since $A$ is non-negatively graded there are in fact only finitely many terms in this direct sum which are nonzero in degree $k$, and for each of these finitely many terms the $F$-filtration is bounded below. 

	\medskip

	The other direction is true for any $\P$-algebra $A$, without any connectivity or nilpotence assumption. Indeed, consider the expression
	\[
	\operatorname{Gr}_F^p \QQ A \cong \bigoplus_{n \geq 1} \P(n) \otimes_{\mathbb S_n} \bigoplus_{k_1+\ldots+k_n=\ell} \mathrm{Ind}_{\mathbb S_{k_1} \times \ldots \times \mathbb S_{k_n}}^{\mathbb S_\ell} \left( \P^\antishriek(k_1) \otimes \ldots \otimes \P^\antishriek(k_n) \right) \otimes_{\mathbb S_\ell} \operatorname{Gr}_F^p A^{\otimes \ell}.
	\]
	Since $A = F^1A$ we must have $\operatorname{Gr}_F^p A^{\otimes \ell} = 0$ for $\ell > p$. Hence in the above sums all terms with $\ell > p$ vanish, so all terms with $n>p$ vanish, so $G^n \operatorname{Gr}_F^p \QQ A = 0$ for $n>p$. 
\end{proof}

\begin{thm}\label{completiontheorem}
	Suppose that $A$ is non-negatively graded and nilpotent. Then $A$ is homotopy complete.
\end{thm}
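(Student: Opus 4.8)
The plan is to reduce the theorem to the filtered statements already proved, using the two filtrations $F$ and $G$ on $\QQ A$ together with the filtered quasi-isomorphism $\QQ A \to A$ of \cref{QA to A is a filtered qi}. The key point to keep track of is that the homotopy completion $\hc A = (\QQ A)^\wedge$ is, by definition, the completion with respect to the operadic ($G$-)filtration, whereas \cref{QA to A is a filtered qi} and \cref{firstfiltrationlemma} are controlled by the $F$-filtration; the passage between the two is exactly what commensurability buys us.

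First I would observe that both $F$-filtrations are exhaustive: since $A = F^1A$ we have $F^1(A^{\otimes n}) = A^{\otimes n}$, hence $F^1\QQ A = \QQ A$, and likewise $F^1 A = A$. Applying \cref{firstfiltrationlemma} to the $F$-filtered quasi-isomorphism $\QQ A \to A$, and using that the inner $\varinjlim_m$ appearing there collapses to the quotient because the filtrations are exhaustive, I obtain that the induced map on $F$-completions
\[
\varprojlim_n \QQ A / F^n \QQ A \longrightarrow \varprojlim_n A/F^n A
\]
is a quasi-isomorphism.

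Next I would identify the target. Since $A$ is nilpotent, its operadic ($=F$-)filtration is bounded below, hence complete, so $\varprojlim_n A/F^n A = A$. Thus the map $(\QQ A)^{\wedge_F} \to A$ induced by $\QQ A \to A$ is a quasi-isomorphism. By naturality of completion this map fits into a commutative triangle with the canonical completion map $\QQ A \to (\QQ A)^{\wedge_F}$ and the ordinary bar-cobar resolution $\QQ A \to A$, the latter being a quasi-isomorphism. Two-out-of-three then forces the completion map $\QQ A \to (\QQ A)^{\wedge_F}$ to be a quasi-isomorphism.

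Finally, because $A$ is non-negatively graded and nilpotent, the commensurability lemma above shows that the $F$- and $G$-filtrations on $\QQ A$ agree up to cofinality, so that $(\QQ A)^{\wedge_F} = \varprojlim_n \QQ A/F^n\QQ A$ is identified with $(\QQ A)^{\wedge_G} = \varprojlim_n \QQ A/G^n\QQ A = \hc A$, compatibly with the maps out of $\QQ A$. Hence $\QQ A \to \hc A$ is a quasi-isomorphism, i.e.\ $A$ is homotopy complete. Essentially all the real content is packaged into the preceding lemmas; the only thing one must be careful about — and the single place where non-negativity and nilpotence are genuinely used — is that $\hc A$ is defined via the operadic filtration $G$ while the filtered quasi-isomorphism is governed by $F$, so the commensurability step is indispensable rather than cosmetic.
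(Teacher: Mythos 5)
Your proof is correct and follows essentially the same route as the paper's: apply \cref{QA to A is a filtered qi} and \cref{firstfiltrationlemma} to get a quasi-isomorphism on $F$-completions, use nilpotence to identify the $F$-completion of $A$ with $A$ itself, and use commensurability to identify the $F$- and $G$-completions of $\QQ A$. Your explicit two-out-of-three step showing that the completion map $\QQ A \to (\QQ A)^\wedge$ itself is a quasi-isomorphism is a slightly more careful rendering of what the paper leaves implicit, but the argument is the same.
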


\begin{proof}
	Since $\QQ A \to A$ is a filtered quasi-isomorphism by \cref{QA to A is a filtered qi}, it induces a quasi-isomorphism between the completions with respect to the $F$-filtrations. Since $A$ is nilpotent this just means that the completion of $\QQ A$ with respect to the $F$-filtration is quasi-isomorphic to $A$. But by the previous lemma the completion of $\QQ A$ with respect to the $F$-filtration equals the completion with respect to the $G$-filtration, which means then that $A \simeq (\QQ A)^\wedge = \hc A$. 
\end{proof}

\begin{rem}
	In rational homotopy theory, positively graded dg Lie algebras over $\rats$ correspond to rational homotopy types of simply connected spaces (which was the case originally considered by Quillen \cite{quillenrationalhomotopytheory}), and non-negatively graded, nilpotent Lie algebras of finite type over $\rats$ correspond to connected, nilpotent spaces of finite $\rats$-type \cite{neisendorfer}. Thus, \cref{completiontheorem} essentially says that all dg Lie algebras arising from rational homotopy theory are homotopy complete.
\end{rem}

\section{A primer on infinity-coalgebras}\label{sect:background}

\begin{para}
	The goal of this section is to give a brief account of the formalism of $\infty$-coalgebras over a Koszul operad. We assume that the reader is comfortable with the corresponding formalism of $\infty$-algebras, see e.g.\ \cite[Chapter 10]{lodayvallette}. In particular we will try to clarify the difference between the various notions of $\infty$-coalgebra that one can find in the literature, and why it is important for us that we use precisely the definition we have chosen. This section will contain almost no proofs. We will several times refer to Hoffbeck--Leray--Vallette \cite{hoffbecklerayvallette} for theoretical results, although they work in a more general setting of ``gebras'' over a properad, meaning that they consider operations with multiple inputs and multiple outputs. In the special case of $\infty$-coalgebras these results were certainly known before \cite{hoffbecklerayvallette}, but we are not aware of a prior systematic treatment in the literature. 
\end{para}

\begin{para}
	Let us first recall that although one most often speaks of algebras over operads and coalgebras over cooperads, it is also possible to define the notions of a coalgebra over an operad and an algebra over a cooperad. If $\P$ is an operad, $\C$ is a cooperad, and $V$ is a chain complex, then we have the following schematic table:

	\begin{table}[h!]
		\begin{tabular}{lll}
 			$V$ an algebra over $\P$ &:  & maps $\P(n) \otimes_{\mathbb S_n} V^{\otimes n} \longrightarrow V$  \\
 			$V$ a coalgebra over $\C$&: & maps $V \longrightarrow  \C(n) \otimes_{\mathbb S_n} V^{\otimes n}$ \\
 			$V$ a coalgebra over $\P$&: & $\mathbb S_n$-equivariant maps $\P(n) \otimes V \longrightarrow  V^{\otimes n}$  \\
 			$V$ an algebra over $\C$&:  & $\mathbb S_n$-equivariant maps $V^{\otimes n} \longrightarrow  \C(n) \otimes V$.
		\end{tabular}
	\end{table}

	One can also think of a $\P$-coalgebra as a chain complex $V$ with a morphism of operads $\P \to \mathsf{coEnd}_V$ to the \emph{coendomorphism operad} $\mathsf{coEnd}_V$, with $\mathsf{coEnd}_V = \hom_\K(V,V^{\otimes n})$. If $\P(n)$ is finite dimensional for all $n$ then its linear dual $\P^\ast$ is a cooperad, and there are equivalences of categories between $\P$-algebras and $\P^\ast$-algebras, and between $\P$-coalgebras and $\P^\ast$-coalgebras. We caution the reader that a different definition of ``coalgebra over a cooperad'' appears in \cite{victor}. 
\end{para}

\begin{para}
	Let for the rest of this section $\P$ be a Koszul operad, with Koszul dual cooperad $\P^\antishriek$. We have $\P(0)=0$ and $\P(1) \cong \K$. We assume moreover that $\P(n)$ is finite dimensional for all $n$. We will now untangle the various definitions of a $\P_\infty$-coalgebra. 
\end{para}

\begin{para}
	Recall that $\P$ has a canonically defined \emph{Koszul resolution} $\P_\infty \coloneqq \Omega \P^\antishriek \to \P$, and that a $\P_\infty$-algebra is the same thing as an algebra over the Koszul resolution $\P_\infty$. Dualizing this definition leads to one possible definition of a $\P_\infty$-coalgebra, which is the one we are using in this article.
\end{para}

\begin{defn}\label{def Poo coalgebra}
	A \emph{$\P_\infty$-coalgebra} is a coalgebra over the operad $\P_\infty$. 
\end{defn}

\begin{rem}
	An equivalent definition of a $\P_\infty$-coalgebra structure on $C$ is an operadic twisting morphism from the Koszul dual cooperad $\P^\antishriek$ to $\mathsf{coEnd}_C$, cf. \cite[Theorem 6.5.7]{lodayvallette}. 
\end{rem}

\begin{para}
	One can also define the notion of a $\P_\infty$-algebra without mentioning the Koszul resolution or twisting morphisms, as follows. Let $\P^\antishriek(A)$ denote the cofree conilpotent $\P^\antishriek$-coalgebra cogenerated by a chain complex $A$. For example, if $\P=\com$ then $\P^\antishriek$ is the operadic suspension of the Lie cooperad, and $\P^\antishriek(A)$ is the cofree conilpotent Lie coalgebra cogenerated by $sA$. Then a $\P_\infty$-algebra structure on a chain complex $A$ is the same thing as a square-zero coderivation of degree $-1$ of $\P^\antishriek(A)$ whose linear term vanishes. Dualizing this definition leads to a \emph{different} notion of a $\P_\infty$-coalgebra than the one of \cref{def Poo coalgebra}.
\end{para}

\begin{defn}
	A \emph{locally finite $\P_\infty$-coalgebra} is a chain complex $C$ together with a square zero derivation of degree $-1$ with vanishing linear term on the free $\P^\antishriek$-algebra generated by $C$. 
\end{defn}

\begin{para}
	Unraveling all of the structures involved, one sees that a $\P_\infty$-coalgebra structure on a chain complex $V$ is given by a collection of $\mathbb S_n$-equivariant maps
	\[
	\P^\antishriek(n) \otimes C \to C^{\otimes n}
	\]
	satisfying an infinite sequence of quadratic equations, formally dual to those satisfied by the operations in a $\P_\infty$-algebra.  Taking the linear dual {of each $\P^\antishriek(n)$}, we may encode a $\P_\infty$-coalgebra by a single linear map
	\[
	C \to \prod_n \P^\antishriek(n)^\ast \otimes_{\mathbb S_n} C^{\otimes n}.
	\]
	On the other hand, a locally finite $\P_\infty$-coalgebra is given by a map
	\[
	C \to \bigoplus_n \P^\antishriek(n)^\ast \otimes_{\mathbb S_n} C^{\otimes n}
	\]
	(satisfying exactly the same quadratic equations as those of a $\P_\infty$-coalgebra), since the right hand side is the free $\P^\antishriek$-algebra generated by $C$, and a derivation of an algebra is determined by the values it takes on the generators. This shows that locally finite $\P_\infty$-coalgebras are exactly those $\P_\infty$-coalgebras for which the map $ C \to \prod_n \P^\antishriek(n)^\ast \otimes_{\mathbb S_n} C^{\otimes n}$ factors through the direct sum. 
\end{para}

\begin{para}
	We denote by $\P^\antishriek(C)^\wedge$ the completion of the free $\P^\antishriek$-algebra on $C$, cf.\ \cref{def:homotopy completion}. It can be written explicitly as	
	\[
	\P^\antishriek(C)^\wedge = \prod_n \P^\antishriek(n)^\ast \otimes_{\mathbb S_n} C^{\otimes n}.
	\]
	One has the following result, which is proven in exactly the same way as the analogous fact for $\P_\infty$-algebras \cite[Proposition 10.1.11]{lodayvallette}.
\end{para}

\begin{prop}
	There is a natural bijection between $\P_\infty$-coalgebra structures on a chain complex $C$ and (continuous) square zero derivations of degree $-1$ with vanishing linear term on \emph{$\P^\antishriek(C)^\wedge$}, the free complete \emph{$\P^\antishriek$}-algebra generated by $C$. 
\end{prop}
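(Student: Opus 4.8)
The plan is to dualize the proof of the corresponding statement for $\P_\infty$-algebras \cite[Proposition 10.1.11]{lodayvallette}, the only genuinely new point being the role of the completion. The backbone of the argument is the universal property of free algebras for derivations: a degree $-1$ derivation $D$ of $\P^\antishriek(C)^\wedge$ is completely determined by its restriction to the generators, i.e.\ by the composite $\varphi_D \colon C \hookrightarrow \P^\antishriek(C)^\wedge \xrightarrow{D} \P^\antishriek(C)^\wedge$, and conversely every degree $-1$ linear map $\varphi \colon C \to \P^\antishriek(C)^\wedge$ extends uniquely to such a derivation by the Leibniz rule. First I would establish this bijection $D \mapsto \varphi_D$ carefully in the complete setting, since this is exactly the step where the completion is needed (see below).

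Next I would unravel the data. Writing $\P^\antishriek(C)^\wedge = \prod_n \P^\antishriek(n)^\ast \otimes_{\mathbb{S}_n} C^{\otimes n}$ and decomposing $\varphi$ by arity yields a family of $\mathbb{S}_n$-equivariant maps $\P^\antishriek(n) \otimes C \to C^{\otimes n}$ for $n \geq 1$. The arity-one component $\P^\antishriek(1) \cong \K$ contributes a degree $-1$ endomorphism of $C$; the induced differential $\partial$ on $\P^\antishriek(C)^\wedge$ coming from $d_C$ already accounts for this term, so requiring the derivation to have vanishing linear term isolates exactly the higher cooperations of arity $n \geq 2$. This collection is precisely an operadic twisting morphism $\P^\antishriek \to \coEnd_C$, which by the remark following \cref{def Poo coalgebra} (cf.\ \cite[Theorem 6.5.7]{lodayvallette}) is the same datum as a $\P_\infty$-coalgebra structure on $C$.

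It then remains to match the square-zero condition on the two sides. Since $\partial + D$ has odd degree, its square $(\partial + D)^2 = \tfrac12[\partial + D, \partial + D]$ is again a derivation, hence is determined by its restriction to the generators; consequently $(\partial + D)^2 = 0$ if and only if that restriction vanishes. A direct computation — formally dual to the one establishing the $\P_\infty$-algebra relations — shows that this restriction is exactly the left-hand side of the Maurer--Cartan equation for the twisting morphism $\varphi$, so that the total derivation squares to zero if and only if $\varphi$ defines a $\P_\infty$-coalgebra structure. Naturality in $C$ is immediate, as every step (the free algebra, its completion, restriction to generators, and the arity decomposition) is functorial.

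The main obstacle, and the only place where the argument departs from the algebra case, is the well-definedness of the extension of $\varphi$ to a derivation, which is what forces us to pass to the completion $\P^\antishriek(C)^\wedge$ rather than the plain free algebra $\P^\antishriek(C) = \bigoplus_n \P^\antishriek(n)^\ast \otimes_{\mathbb{S}_n} C^{\otimes n}$. A $\P_\infty$-coalgebra structure lands in the product $\prod_n$, not the direct sum, so $\varphi(c)$ is in general an infinite sum; extending by the Leibniz rule and inserting $\varphi$ into each slot then produces infinitely many terms. The key observation is that, because $\varphi$ has vanishing linear term it strictly raises arity, so for each fixed output arity only finitely many slots and source arities contribute. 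Hence $D$ is a well-defined derivation of the completed free algebra and the bijection goes through. This is the exact mirror of the fact, used elsewhere in this section, that the cobar-type constructions on coalgebras must be completed in order to be well behaved.
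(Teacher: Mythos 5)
Your argument is correct and is exactly the approach the paper takes: the paper simply states that the proposition ``is proven in exactly the same way as the analogous fact for $\P_\infty$-algebras'' (citing Proposition 10.1.11 of Loday--Vallette), and your proposal is precisely that dualization spelled out. The one point you elaborate beyond the paper's terse citation --- that a coderivation-free extension by the Leibniz rule converges on the completed free algebra because $\varphi$ raises arity, so only finitely many terms contribute in each output arity --- is the right justification for the only step where the coalgebra case genuinely differs from the algebra case.
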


\begin{rem}
	In the previous proposition the word ``continuous'' is redundant. Indeed, since we are considering the operadic filtration, cf. Definition \ref{def:operadic filtration}, every derivation is necessarily filtration preserving and therefore continuous. Similarly, every morphism of $\P$-algebras is filtration preserving. In particular complete $\P$-algebras form a full subcategory of all $\P$-algebras.
\end{rem}

\begin{example}
	An $\Aoo$-coalgebra structure on $C$ can be identified with a square-zero derivation on the completion of the tensor algebra on $s^{-1}C$, and a $\Coo$-coalgebra structure can be identified with a square-zero derivation on the completion of the free Lie algebra on $s^{-1}C$. 
\end{example}

\begin{rem}
	We say that a $\P_\infty$-coalgebra $C$ is \emph{conilpotent} if there exists an exhaustive filtration of the form
	\[
	0 = F_0C \subseteq F_1C \subseteq F_2C \subseteq F_3C \subseteq \ldots
	\]
	such that all coalgebra structure maps preserve this filtration. A conilpotent $\P_\infty$-coalgebra is always locally finite, but not vice versa: there are strict inclusions
	\[
	(\text{conilpotent $\P_\infty$-coalgebras}) \subsetneq  (\text{locally finite $\P_\infty$-coalgebras}) \subsetneq  (\text{$\P_\infty$-coalgebras}).
	\]
	To see strictness of the first inclusion, note e.g.\ that any $\P$-coalgebra is also a locally finite $\P_\infty$-coalgebra, but not all $\P$-coalgebras are conilpotent. We caution the reader that all three notions are often referred to as simply ``$\P_\infty$-coalgebras'' in the literature.
\end{rem}

\begin{para}
	Let $C$ be a locally finite $\P_\infty$-coalgebra. Adding the square-zero derivation of $\P^\antishriek(C)$ to the internal differential of $\P^\antishriek(C)$ gives a differential graded $\P^\antishriek$-algebra that we call the \emph{cobar construction} on $C$, and that we denote $\Omega C$. Similarly if  $C$ is a general $\P_\infty$-coalgebra, then adding the square-zero derivation of $\P^\antishriek(C)^\wedge$ to the internal differential of $\P^\antishriek(C)^\wedge$ gives a differential graded $\P^\antishriek$-algebra that we call the \emph{completed cobar construction}, which we denote $\Omega^\wedge C$.
\end{para}

\begin{para}
	The definition of $\P_\infty$-coalgebra in terms of square-zero derivations is more convenient when one wants to define the notion of a morphism of $\P_\infty$-coalgebras. 
\end{para}

\begin{defn}
	Let $C$ and $D$ be $\P_\infty$-coalgebras. A \emph{$\P_\infty$-morphism} $C \rightsquigarrow D$ is a morphism of $\P^\antishriek$-algebras $\Omega^\wedge C \to \Omega^\wedge D$. If additionally $C$ and $D$ are locally finite, then a \emph{locally finite $\P_\infty$-morphism} $C \rightsquigarrow D$ is a morphism $\Omega C \to \Omega D$. There is an evident notion of composition of $\P_\infty$-morphisms and locally finite $\P_\infty$-morphisms making $\P_\infty$-coalgebras (resp.\ locally finite $\P_\infty$-coalgebras) into a category. 
\end{defn}

\begin{para}
	A $\P_\infty$-morphism of $\P_\infty$-coalgebras $f \colon C \rightsquigarrow D$, considered as a map $\Omega^\wedge C \to \Omega^\wedge D$, is completely determined by how it acts on generators. This implies that it can be written in terms of its components, which are $\mathbb S_n$-equivariant maps
	\[
	f_n \colon \P^\antishriek(n) \otimes C \to D^{\otimes n}
	\]
	for $n \geq 1$. In particular, since $\P^\antishriek(1)\cong\K$, the first component is a map $f_1 \colon C \to D$, which we call the \emph{linear term} of $f$.
\end{para}

\begin{defn}\label{properties of morphisms}
	A $\P_\infty$-morphism between $\P_\infty$-coalgebras is said to be a \emph{$\P_\infty$-quasi-isomorphism} if its linear term is a quasi-isomorphism and a \emph{$\P_\infty$-isomorphism} if its linear term is an isomorphism. A $\P_\infty$-morphism between two $\P_\infty$-coalgebras with the same underlying chain complex is called a \emph{$\P_\infty$-isotopy} if its linear term is the identity. 
\end{defn}

\begin{para}
	The definitions of $\P_\infty$-quasi-isomorphism and $\P_\infty$-isomorphism given in \cref{properties of morphisms} are justified by the following two facts:
	\begin{enumerate}
		\item If $C$ and $D$ are $\P_\infty$-coalgebras, then their homologies $H(C)$ and $H(D)$ are naturally $\P$-coalgebras. If $f \colon C \rightsquigarrow D$ is a $\P_\infty$-morphism, then $H(f_1) \colon H(C) \to H(D)$ is a morphism of $\P$-coalgebras. So $C \rightsquigarrow D$ is a $\P_\infty$-quasi-isomorphism if and only if the induced map of $\P$-coalgebras $H(C)\to H(D)$ is an isomorphism.
		\item A $\P_\infty$-morphism $f \colon C \rightsquigarrow D$ of $\P_\infty$-coalgebras is a $\P_\infty$-isomorphism in the sense of \cref{properties of morphisms} if and only if there exist a $\P_\infty$-morphism $g \colon D \rightsquigarrow C$ such that $g \circ f = \mathrm{id}_C$ and $f \circ g = \mathrm{id}_D$. 
	\end{enumerate}
	For the proof of (2), see \cite[Theorem 3.22]{hoffbecklerayvallette}; it is virtually identical to the proof of the analogous property of $\infty$-morphisms of $\P_\infty$-algebras. 
\end{para}

\begin{rem}\label{remark on inverse}
	Suppose that $f$ is a locally finite $\P_\infty$-morphism of locally finite $\P_\infty$-coalgebras. If $f$ is a $\P_\infty$-iso\-morphism in the sense of \cref{properties of morphisms}, then it is not necessarily an isomorphism in the category of  locally finite $\P_\infty$-coalgebras: its unique inverse in the category of all $\P_\infty$-coalgebras may not be a locally finite morphism. The reason is that if $f$ is given by a morphism $\Omega C \to \Omega D$ then the formula for its inverse is given by an infinite sum over trees, and this infinite sum will in general not converge unless the cobar constructions are completed. For a concrete example, consider the vector space $\K$ as an abelian $\Loo$-coalgebra, i.e.\ an $\Loo$-coalgebra with all cobrackets identically zero. Then the group of $\Loo$-isomorphisms $\K \rightsquigarrow \K$ is isomorphic to the group of formal power series over $\K$ in one variable with vanishing constant term and nonzero linear term, under composition. Such a power series corresponds to a locally finite $\Loo$-morphism $\K \rightsquigarrow \K$ if and only if it is a polynomial. Since the compositional inverse of a polynomial is in general only a power series, we see in particular that the inverse of a locally finite $\Loo$-morphism is in general not locally finite. 
\end{rem}

\begin{thm}\label{quasi-inverse coalgebras}
	Let $f \colon C \rightsquigarrow D$ be a quasi-isomorphism of $\P_\infty$-coalgebras. Then there exists a quasi-isomorphism $g \colon D \rightsquigarrow C$ such that the induced maps $H(C) \to H(D)$ and $H(D)\to H(C)$ are inverses. In particular, if two $\P_\infty$-coalgebras $C$ and $D$ are quasi-isomorphic, then there exists a $\P_\infty$-quasi-isomorphism $C \rightsquigarrow D$. 
\end{thm}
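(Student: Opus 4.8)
The plan is to dualize the standard argument that $\infty$-quasi-isomorphisms of algebras admit quasi-inverses, the crucial point being that all the relevant sums over trees converge because we work with the \emph{completed} cobar construction $\Omega^\wedge$. The whole argument rests on two inputs. The first is the homotopy transfer theorem for $\P_\infty$-coalgebras, which produces for every $\P_\infty$-coalgebra $X$ a \emph{minimal} model $X_{\min}$ (its homology $H(X)$, equipped with a transferred $\P_\infty$-coalgebra structure and zero differential) together with $\P_\infty$-quasi-isomorphisms $\iota_X\colon X_{\min}\rightsquigarrow X$ and $\pi_X\colon X\rightsquigarrow X_{\min}$ whose linear terms induce the identity on $H(X)$. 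The second is the fact recorded above (see also \cite[Theorem 3.22]{hoffbecklerayvallette}) that a $\P_\infty$-morphism is a $\P_\infty$-isomorphism precisely when it admits a two-sided inverse.

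First I would reduce to the minimal case. Given the $\P_\infty$-quasi-isomorphism $f\colon C\rightsquigarrow D$, form the composite
\[
\phi \coloneqq \pi_D \circ f \circ \iota_C \colon C_{\min} \rightsquigarrow D_{\min},
\]
which is again a $\P_\infty$-quasi-isomorphism. Since $C_{\min}$ and $D_{\min}$ have vanishing differential, its linear term $\phi_1\colon H(C)\to H(D)$ is a quasi-isomorphism between complexes with zero differential, hence an isomorphism; thus $\phi$ is a $\P_\infty$-\emph{isomorphism}. By the invertibility of $\P_\infty$-isomorphisms it admits a two-sided inverse $\phi^{-1}\colon D_{\min}\rightsquigarrow C_{\min}$, which is in particular a $\P_\infty$-quasi-isomorphism.

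I would then define the desired quasi-inverse by recomposing with the transfer morphisms,
\[
g \coloneqq \iota_C \circ \phi^{-1} \circ \pi_D \colon D \rightsquigarrow C,
\]
a composite of $\P_\infty$-quasi-isomorphisms, hence a $\P_\infty$-quasi-isomorphism. Because the linear terms of $\iota_C$ and $\pi_D$ induce the identity on homology, one has $H(\phi_1)=H(f_1)$ and therefore $H(g_1)=H(f_1)^{-1}$, so the induced maps $H(C)\to H(D)$ and $H(D)\to H(C)$ are mutually inverse isomorphisms of $\P$-coalgebras, as required. The final ``in particular'' clause then follows formally: if $C$ and $D$ are linked by a zig-zag of $\P_\infty$-quasi-isomorphisms, one uses the first part to reverse each backward-pointing arrow and composes, obtaining a single $\P_\infty$-quasi-isomorphism $C\rightsquigarrow D$.

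The main obstacle is the homotopy transfer theorem itself, i.e.\ the existence of the minimal models and of the transfer morphisms $\iota_X,\pi_X$. Its proof is formally dual to the algebra case, but one must check that the infinite sums over trees defining the transferred cobrackets and the components of $\iota_X$ and $\pi_X$ are well defined. This is precisely where the completed cobar construction is indispensable: the sums need not converge in $\Omega C$ (equivalently, in the free $\P^\antishriek$-algebra on $C$), but they do converge in $\Omega^\wedge C = \P^\antishriek(C)^\wedge$ — the same phenomenon already flagged in \cref{remark on inverse}. Granting this transfer statement, the remainder of the argument is purely formal.
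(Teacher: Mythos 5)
Your argument is correct and is essentially the paper's own: the paper disposes of this statement by citing \cite[Theorem 4.18]{hoffbecklerayvallette}, and your reduction to minimal models via the homotopy transfer theorem, followed by inversion of the resulting $\P_\infty$-isomorphism between minimal models and recomposition with the transfer morphisms, is precisely the standard argument behind that reference. The two inputs you isolate --- the transfer theorem for $\P_\infty$-coalgebras, which converges only because one works with the completed cobar construction, and the invertibility of $\P_\infty$-isomorphisms --- are exactly the facts the paper records as \cref{minimal coalgebras} and \cite[Theorem 3.22]{hoffbecklerayvallette}, so no essential step is missing.
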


\begin{proof}
	See \cite[Theorem 4.18]{hoffbecklerayvallette}.
\end{proof}

\begin{defn}
	A $\P_\infty$-coalgebra is said to be \emph{minimal} if its underlying chain complex has vanishing differential.
\end{defn}

\begin{thm}\label{minimal coalgebras}
	Let $C$ be a $\P_\infty$-coalgebra. Then $C$ is quasi-isomorphic to a minimal $\P_\infty$-coalgebra, which is unique up to non-canonical $\P_\infty$-isomorphism.
\end{thm}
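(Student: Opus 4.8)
The plan is to prove existence by a homotopy transfer argument dual to the one for $\P_\infty$-algebras, and uniqueness by observing that a $\P_\infty$-quasi-isomorphism between minimal objects is automatically invertible.

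For existence, since we work over a field, I would first choose a contraction of the underlying chain complex of $C$ onto its homology $H\coloneqq H(C)$: chain maps $i\colon H\to C$ and $p\colon C\to H$ with $pi=\mathrm{id}_H$, together with a degree $+1$ homotopy $h\colon C\to C$ satisfying $\mathrm{id}_C - ip = d_Ch+hd_C$, arranged to be a \emph{special} contraction ($ph=0$, $hi=0$, $h^2=0$). Regarding the $\P_\infty$-coalgebra structure on $C$ as an operadic twisting morphism $\P^\antishriek\to\coEnd_C$ (\cref{def Poo coalgebra}), I would then transfer it along this contraction by the sum-over-trees formulas, mirror-dual to the algebra case: the single input enters at the root through $i$, internal edges are decorated by the homotopy $h$, and the several outputs exit at the leaves through $p$. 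Because $\P$ is Koszul with finite-dimensional components, for each fixed arity $n$ the transferred cooperation $\P^\antishriek(n)\otimes H\to H^{\otimes n}$ is a \emph{finite} sum over trees, so the structure maps are well defined without any completion; collecting them over all $n$ yields a $\P_\infty$-coalgebra structure on $H$. Since $H$ carries the zero differential it is minimal, and the contraction data assemble into a $\P_\infty$-quasi-isomorphism between $C$ and $H$ whose linear term is $p$ (equivalently $i$).

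For uniqueness, suppose $M$ and $M'$ are two minimal $\P_\infty$-coalgebras each quasi-isomorphic to $C$, hence quasi-isomorphic to one another. By \cref{quasi-inverse coalgebras} there is then a direct $\P_\infty$-quasi-isomorphism $f\colon M\rightsquigarrow M'$. Its linear term $f_1\colon M\to M'$ is a quasi-isomorphism between chain complexes with vanishing differential, and such a map is simply an isomorphism; thus $f$ is a $\P_\infty$-isomorphism in the sense of \cref{properties of morphisms}. Invoking fact (2) recorded after \cref{properties of morphisms} (proved in \cite[Theorem 3.22]{hoffbecklerayvallette}), $f$ admits a two-sided inverse $\P_\infty$-morphism, so $M$ and $M'$ are isomorphic as $\P_\infty$-coalgebras. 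The isomorphism is non-canonical because it depends on the chosen contraction and on the inversion procedure.

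The main obstacle is the existence half, namely setting up the homotopy transfer theorem in the coalgebraic setting. The temptation is to dualize the classical $\P_\infty$-algebra transfer theorem, but this fails when $C$ is infinite dimensional, since then $(C^{\otimes n})^\ast$ is strictly larger than $(C^\ast)^{\otimes n}$ and $C^\ast$ need not be a $\P_\infty$-algebra. One must therefore either rerun the tree-level combinatorics directly for cooperations --- checking that the transferred maps satisfy the dual quadratic relations and that the side conditions of the contraction make the candidate map a genuine $\P_\infty$-morphism --- or appeal to the general formalism of gebras over a properad of Hoffbeck--Leray--Vallette \cite{hoffbecklerayvallette}, where this is handled uniformly. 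The argument is otherwise virtually identical to the algebra case.
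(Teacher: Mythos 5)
Your proposal is correct and follows essentially the same route as the paper: existence via the Homotopy Transfer Theorem for $\infty$-coalgebras (transferring along a contraction onto $H(C)$, with the paper likewise citing \cite[Theorem 4.14]{hoffbecklerayvallette}), and uniqueness from the observation that a $\P_\infty$-quasi-isomorphism between minimal $\P_\infty$-coalgebras has an invertible linear term and is therefore a $\P_\infty$-isomorphism. Your remarks on why naive dualization fails and why the arity-wise sums over trees are finite are accurate elaborations of what the paper leaves implicit.
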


\begin{proof}
	This follows from a version of the Homotopy Transfer Theorem for $\infty$-coalgebras, see \cite[Theorem 4.14]{hoffbecklerayvallette}. By choosing a contraction from $C$ to $H(C)$ we can transfer the $\P_\infty$-coalgebra structure to a quasi-isomorphic structure on $H(C)$, which is then minimal. Uniqueness follows since a quasi-isomorphism between minimal $\P_\infty$-coalgebras is necessarily an isomorphism.
\end{proof}

\begin{para}
	Theorems \ref{quasi-inverse coalgebras} and \ref{minimal coalgebras} both rely on the Homotopy Transfer Theorem for $\infty$-coalgebras, which is proven using explicit ``sums over trees'' formulas. As in \cref{remark on inverse}, such an argument becomes problematic in the category of locally finite $\P_\infty$-coalgebras, since an infinite sum over trees will not have any reason to converge in that setting. We are not aware of any useful analogue of the Homotopy Transfer Theorem in the category of locally finite $\P_\infty$-coalgebras. Since both Theorems \ref{quasi-inverse coalgebras} and \ref{minimal coalgebras} are crucial for our arguments in Section \ref{sect:thmb} we are forced to work with in the category of general $\P_\infty$-coalgebras, even though all the $\P_\infty$-coalgebras we care about happen to be locally finite (in fact conilpotent). 
\end{para}

\begin{para}\label{par:cobar is not functorial}
	In \S\ref{failure of rectification} we mentioned that if two $\P$-algebras $A$ and $B$ are quasi-isomorphic in the category of $\P_\infty$-algebras, then they are also quasi-isomorphic in the category of $\P$-algebras, but that there is no reason for the corresponding statement for coalgebras to be true. Let us explain why this is the case, by first recalling how to prove the corresponding statement for $\P$-algebras. Suppose that $A$ and $B$ are $\P$-algebras, and that we have a $\P_\infty$-quasi-isomorphism $A \rightsquigarrow B$. Applying the bar and cobar functors gives a morphism of $\P$-algebras $\CobarAC \BarAC A \to \CobarAC \BarAC B$, and the counit of the bar-cobar adjunction furnishes morphisms of $\P$-algebras $\CobarAC \BarAC A \to A$, and $\CobarAC \BarAC B \to B$ which fit together in a commutative square
	\[
	\begin{tikzcd}
		\CobarAC \BarAC A \arrow[r]\arrow[d, "\sim" {anchor=south, rotate=90, inner sep=.5mm, xshift=.5mm}]& \CobarAC \BarAC B\arrow[d, "\sim" {anchor=south, rotate=90, inner sep=.5mm, xshift=.5mm}]\\
		A \arrow[r,rightsquigarrow]& B. 
	\end{tikzcd}
	\]
	Since the bottom arrow and the vertical arrows are quasi-isomorphisms, so is the top one. Thus, the algebras $A$ and $B$ are connected by a zig-zag of $\P$-algebra quasi-isomorphisms, as claimed. Now if we had two \emph{locally finite} $\P$-coalgebras $C$ and $D$, and a \emph{locally finite} $\P_\infty$-coalgebra quasi-isomorphism $C \rightsquigarrow D$, one could write down an analogous diagram
	\[
	\begin{tikzcd}
 		\BarAC \CobarAC C \arrow[r]& \BarAC \CobarAC D\\
		C \arrow[r,rightsquigarrow]\arrow[u, "\sim" {anchor=south, rotate=90, inner sep=.5mm, xshift=-.5mm}]& D  \arrow[u, "\sim" {anchor=south, rotate=90, inner sep=.5mm, xshift=-.5mm}]
	\end{tikzcd}
	\]
	and conclude by an identical argument that $C$ and $D$ are quasi-isomorphic as $\P$-coalgebras. But if $C$ and $D$ are not locally finite then $\Omega C$ and $\Omega D$ are undefined, and if the morphism $C \rightsquigarrow D$ is not locally finite then it does not correspond to a morphism $\Omega C \to \Omega D$. Either way, the above argument breaks down. 
\end{para}

\begin{para}
	One advantage of the completed cobar construction over the usual cobar construction is that the complete cobar construction preserves quasi-isomorphisms, unlike the usual one.
\end{para}

\begin{thm}\label{complete cobar preserves q-iso}
	Let $C \rightsquigarrow D$ be a $\P_\infty$-quasi-isomorphism of $\P_\infty$-coalgebras. Then the induced map $\Omega^\wedge C \to \Omega^\wedge D$ is a quasi-isomorphism. 
\end{thm}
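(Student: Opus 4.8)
The plan is to put the ``weight'' filtration on both completed cobar constructions, check that the induced map is a filtered quasi-isomorphism, and then invoke \cref{firstfiltrationlemma}. Recall that as a graded vector space $\Omega^\wedge C = \P^\antishriek(C)^\wedge = \prod_{n \geq 1}\P^\antishriek(n)^\ast \otimes_{\mathbb{S}_n} C^{\otimes n}$, and define the decreasing filtration
\[
F^p\Omega^\wedge C \coloneqq \prod_{n \geq p}\P^\antishriek(n)^\ast \otimes_{\mathbb{S}_n} C^{\otimes n}.
\]
The differential of $\Omega^\wedge C$ splits into two parts: the internal differential coming from the differential of $C$, which preserves the weight $n$, and the cobar part, a derivation determined by the structure maps of $C$, which \emph{strictly raises} the weight because the linear term of a $\P_\infty$-coalgebra structure vanishes. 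Hence the total differential preserves $F^\bullet$. Likewise the map $\Omega^\wedge C \to \Omega^\wedge D$ is a morphism of $\P^\antishriek$-algebras carrying the generators $C$ into $\prod_{n \geq 1}\P^\antishriek(n)^\ast \otimes_{\mathbb{S}_n} D^{\otimes n}$, i.e.\ into weights $\geq 1$; being multiplicative it therefore sends weight-$p$ elements to weight $\geq p$, so it preserves $F^\bullet$.

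Next I would compute the associated graded. Since the cobar differential strictly raises the weight, it vanishes on $\Gr_F^p$, so
\[
\Gr_F^p\Omega^\wedge C \cong \P^\antishriek(p)^\ast \otimes_{\mathbb{S}_p} C^{\otimes p}
\]
carries only the internal differential, and for the same reason the map induced on $\Gr_F^p$ is exactly $\mathrm{id}_{\P^\antishriek(p)^\ast} \otimes_{\mathbb{S}_p} f_1^{\otimes p}$, where $f_1 \colon C \to D$ is the linear term of the $\P_\infty$-quasi-isomorphism. By hypothesis $f_1$ is a quasi-isomorphism, so by the Künneth theorem $f_1^{\otimes p}$ is a quasi-isomorphism of $\mathbb{S}_p$-complexes; since we work in characteristic zero, taking $\mathbb{S}_p$-coinvariants against the fixed representation $\P^\antishriek(p)^\ast$ is exact and hence preserves quasi-isomorphisms. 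Thus the map induced on $\Gr_F^p$ is a quasi-isomorphism for every $p$, i.e.\ the map $\Omega^\wedge C \to \Omega^\wedge D$ is a filtered quasi-isomorphism in the sense of \cref{def filtered qi}.

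Finally I would check the hypotheses of \cref{firstfiltrationlemma}. The filtration is exhaustive because $F^1\Omega^\wedge C = \Omega^\wedge C$, and it is complete precisely because we completed: as $\Omega^\wedge C$ is the \emph{product} $\prod_{n}\P^\antishriek(n)^\ast \otimes_{\mathbb{S}_n} C^{\otimes n}$, the quotient $\Omega^\wedge C / F^p$ is the finite product $\prod_{1 \le n < p}\P^\antishriek(n)^\ast \otimes_{\mathbb{S}_n} C^{\otimes n}$, whence $\varprojlim_p \Omega^\wedge C / F^p \cong \Omega^\wedge C$. Therefore \cref{firstfiltrationlemma} applies and the filtered quasi-isomorphism is an honest quasi-isomorphism.

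I expect the only real subtlety to be this last point: for the \emph{uncompleted} cobar construction the same weight filtration is exhaustive but fails to be complete, its completion being exactly $\Omega^\wedge$, which is precisely why $\Omega$ need not preserve quasi-isomorphisms while $\Omega^\wedge$ does. Everything else is bookkeeping with the weight grading and the characteristic-zero exactness of coinvariants.
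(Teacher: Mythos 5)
Your proof is correct and follows essentially the same route as the paper: the same weight filtration $L^p\Omega^\wedge C=\prod_{n\ge p}\P^\antishriek(n)\otimes_{\mathbb S_p}C^{\otimes n}$, the same identification of the induced map on $\Gr^p$ with $\mathrm{id}\otimes f_1^{\otimes p}$, and the same appeal to \cref{firstfiltrationlemma} via boundedness above and completeness. Your extra remarks (why the cobar differential strictly raises weight, exactness of $\mathbb S_p$-coinvariants in characteristic zero, and why completeness fails for the uncompleted cobar) are all accurate and simply make explicit what the paper leaves implicit.
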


\begin{proof}
	Recall that $\Omega^\wedge C = \prod_{n=1}^\infty \P^\antishriek(n) \otimes_{\mathbb S_n} C^{\otimes n}$. We introduce a descending filtration on $\Omega^\wedge C$ by the formula
	\[
	L^p \Omega^\wedge C = \prod_{n=p}^\infty  \P^\antishriek(n) \otimes_{\mathbb S_n} C^{\otimes n},
	\]
	and by the same formula we obtain a descending filtration on $\Omega^\wedge D$. The map $\Omega^\wedge C \to \Omega^\wedge D$ preserves filtrations and is a filtered quasi-isomorphism. Indeed, $\operatorname{Gr}_L^p \Omega^\wedge C \to \operatorname{Gr}_L^p \Omega^\wedge D$ is given by
	\[
	\begin{tikzcd}[column sep=4em] \P^\antishriek(p) \otimes_{\mathbb S_p} C^{\otimes p}\arrow[r,"\mathrm{id}\otimes f_1^{\otimes p}"] & \P^\antishriek(p) \otimes_{\mathbb S_p} D^{\otimes p} \end{tikzcd}
	\]
	where $f_1 \colon C \to D$ is the linear term of the $\infty$-morphism $C \rightsquigarrow D$, and by assumption $f_1$ is a quasi-isomorphism. The result follows from this since the filtrations are bounded above and complete.   
\end{proof}

\bibliographystyle{alpha}
\bibliography{Lie_associative_and_commutative_quasi-isomorphism}

\begin{thebibliography}{CPRNW22}

\bibitem[Amr15]{amrani}
I.~Amrani.
\newblock Comparing commutative and associative unbounded differential graded
  algebras over {$\mathbb Q$} from a homotopical point of view.
\newblock {\em Homology, Homotopy and Applications}, 17(1):191--201, 2015.
\newblock \href{https://arxiv.org/abs/1401.7285}{arXiv:1401.7285}.

\bibitem[Bar68]{barrhochschild}
M.~Barr.
\newblock Harrison homology, {H}ochschild homology and triples.
\newblock {\em Journal of Algebra}, 8:314--323, 1968.

\bibitem[Ber78]{bergman}
G.~M. Bergman.
\newblock The diamond lemma for ring theory.
\newblock {\em Advances in Mathematics}, 29(2):178--218, 1978.

\bibitem[Ber15]{berglund2015rational}
A.~Berglund.
\newblock Rational homotopy theory of mapping spaces via {L}ie theory for
  {$L_\infty$}-algebras.
\newblock {\em Homology, Homotopy and Applications}, 17(2):343--369, 2015.
\newblock \href{https://arxiv.org/abs/1110.6145}{arXiv:1110.6145}.

\bibitem[BW95]{bergeronwolfgang}
N.~Bergeron and H.~L. Wolfgang.
\newblock The decomposition of {H}ochschild cohomology and {G}erstenhaber
  operations.
\newblock {\em Journal of Pure and Applied Algebra}, 104(3):243--265, 1995.

\bibitem[CPRNW22]{followup}
R.~Campos, D.~Petersen, D.~Robert-Nicoud, and F.~Wierstra.
\newblock {Commutative homotopical algebra embeds into non-commutative
  homotopical algebra}.
\newblock \href{https://arxiv.org/abs/2211.02387}{arXiv:2211.02387}, 2022.

\bibitem[DM99]{delignemorgan}
P.~Deligne and J.~W. Morgan.
\newblock Notes on supersymmetry (following {J}oseph {B}ernstein).
\newblock In {\em Quantum fields and strings: a course for mathematicians,
  {V}ol. 1, 2 ({P}rinceton, {NJ}, 1996/1997)}, pages 41--97. AMS, 1999.

\bibitem[DSV16]{preliedeformationtheory}
V.~Dotsenko, S.~Shadrin, and B.~Vallette.
\newblock Pre-{L}ie deformation theory.
\newblock {\em Moscow Mathematical Journal}, 16(3):505--543, 2016.
\newblock \href{https://arxiv.org/abs/1502.03280}{arXiv:1502.03280}.

\bibitem[DT21]{dotsenkotamaroff}
V.~Dotsenko and P.~Tamaroff.
\newblock Endofunctors and {P}oincar\'{e}-{B}irkhoff-{W}itt theorems.
\newblock {\em International Mathematics Research Notices. IMRN},
  (16):12670--12690, 2021.
\newblock \href{https://arxiv.org/abs/1804.06485}{arXiv:1804.06485}.

\bibitem[Fre09]{fressemodules}
B.~Fresse.
\newblock {\em Modules over operads and functors}, volume 1967 of {\em Lecture
  Notes in Mathematics}.
\newblock Springer, 2009.

\bibitem[Gri14]{griffincomodules}
J.~Griffin.
\newblock {Operadic comodules and (co)homology theories}.
\newblock \href{https://arxiv.org/abs/1403.4831}{arXiv:1403.4831}, 2014.

\bibitem[GS87]{gerstenhaberschack}
M.~Gerstenhaber and S.~D. Schack.
\newblock A {H}odge-type decomposition for commutative algebra cohomology.
\newblock {\em Journal of Pure and Applied Algebra}, 48(3):229--247, 1987.

\bibitem[Her01]{hertweck}
M.~Hertweck.
\newblock A counterexample to the isomorphism problem for integral group rings.
\newblock {\em Annals of Mathematics. Second Series}, 154(1):115--138, 2001.

\bibitem[Heu20]{heuts}
G.~Heuts.
\newblock Koszul duality and a conjecture of {F}rancis--{G}aitsgory.
\newblock In {\em Oberwolfach Reports}, volume~17. 2020.
\newblock Abstract from the conference Topologie held July 26--August 1, 2020.
  Organized by Mark Behrens, Ruth Charney, S{\o}ren Galatius and Michael Weiss.

\bibitem[HH13]{harperhess}
J.~E. Harper and K.~Hess.
\newblock Homotopy completion and topological {Q}uillen homology of structured
  ring spectra.
\newblock {\em Geometry \& Topology}, 17(3):1325--1416, 2013.
\newblock \href{https://arxiv.org/abs/1102.1234}{arXiv:1102.1234}.

\bibitem[Hin97a]{hinich97}
V.~Hinich.
\newblock Descent of {D}eligne groupoids.
\newblock {\em International Mathematics Research Notices}, 1997(5):223--239,
  1997.
\newblock
  \href{https://arxiv.org/abs/alg-geom/9606010}{arXiv:alg-geom/9606010}.

\bibitem[Hin97b]{hinichmodelstructure}
V.~Hinich.
\newblock Homological algebra of homotopy algebras.
\newblock {\em Communications in Algebra}, 25(10):3291--3323, 1997.
\newblock \href{https://arxiv.org/abs/q-alg/9702015}{arXiv:q-alg/9702015}.

\bibitem[HLV21]{hoffbecklerayvallette}
E.~Hoffbeck, J.~Leray, and B.~Vallette.
\newblock Properadic homotopical calculus.
\newblock {\em International Mathematics Research Notices. IMRN},
  (5):3866--3926, 2021.
\newblock \href{https://arxiv.org/abs/1910.05027}{arXiv:1910.05027}.

\bibitem[Law]{mathoverflow}
T.~Lawson.
\newblock Answer to \emph{Noncommutative rational homotopy type}.
\newblock MathOverflow.
\newblock \href{https://mathoverflow.net/q/9184}{mathoverflow.net/q/9184}
  (version: 2009-12-18).

\bibitem[Lur10]{lurieICM}
J.~Lurie.
\newblock Moduli problems for ring spectra.
\newblock In {\em Proceedings of the {I}nternational {C}ongress of
  {M}athematicians. {V}olume {II}}, pages 1099--1125. Hindustan Book Agency,
  2010.

\bibitem[Lur17]{HA}
J.~Lurie.
\newblock {Higher algebra}.
\newblock Version of September 18, 2017.
  \href{https://people.math.harvard.edu/~lurie/papers/HA.pdf}{people.math.harvard.edu/{\textasciitilde}lurie/papers/HA.pdf},
  2017.

\bibitem[LV12]{lodayvallette}
J.-L. Loday and B.~Vallette.
\newblock {\em Algebraic operads}, volume 346 of {\em Grundlehren der
  Mathematischen Wissenschaften}.
\newblock Springer, 2012.

\bibitem[Man04]{manettilectures}
M.~Manetti.
\newblock Lectures on deformations of complex manifolds (deformations from
  differential graded viewpoint).
\newblock {\em Rendiconti di Matematica e delle sue Applicazioni. Serie VII},
  24(1):1--183, 2004.

\bibitem[Nei78]{neisendorfer}
J.~Neisendorfer.
\newblock Lie algebras, coalgebras and rational homotopy theory for nilpotent
  spaces.
\newblock {\em Pacific Journal of Mathematics}, 74(2):429--460, 1978.

\bibitem[Pri10]{pridham}
J.~P. Pridham.
\newblock Unifying derived deformation theories.
\newblock {\em Advances in Mathematics}, 224(3):772--826, 2010.
\newblock \href{https://arxiv.org/abs/0705.0344}{arXiv:0705.0344}.

\bibitem[Qui69]{quillenrationalhomotopytheory}
D.~Quillen.
\newblock Rational homotopy theory.
\newblock {\em Annals of Mathematics. Second Series}, 90:205--295, 1969.

\bibitem[Qui70]{quillencommutativerings}
D.~Quillen.
\newblock On the (co-) homology of commutative rings.
\newblock In {\em Applications of {C}ategorical {A}lgebra ({P}roc. {S}ympos.
  {P}ure {M}ath., {V}ol. {XVII}, {N}ew {Y}ork, 1968)}, pages 65--87. AMS, 1970.

\bibitem[RiL22]{victor}
V.~Roca~i Lucio.
\newblock {Absolute algebras, contramodules, and duality squares}.
\newblock \href{https://arxiv.org/abs/2209.09833}{arXiv:2209.09833}, 2022.

\bibitem[RS87]{roggenkampscott}
K.~Roggenkamp and L.~Scott.
\newblock Isomorphisms of {$p$}-adic group rings.
\newblock {\em Annals of Mathematics. Second Series}, 126(3):593--647, 1987.

\bibitem[RU07]{rileyusefi}
D.~Riley and H.~Usefi.
\newblock The isomorphism problem for universal enveloping algebras of {L}ie
  algebras.
\newblock {\em Algebras and Representation Theory}, 10(6):517--532, 2007.

\bibitem[Sal17]{saleh}
B.~Saleh.
\newblock Noncommutative formality implies commutative and {L}ie formality.
\newblock {\em Algebraic and Geometric Topology}, 17(4):2523--2542, 2017.
\newblock \href{https://arxiv.org/abs/1609.02540}{arXiv:1609.02540}.

\bibitem[SU11]{schneiderusefi}
C.~Schneider and H.~Usefi.
\newblock The isomorphism problem for universal enveloping algebras of
  nilpotent {L}ie algebras.
\newblock {\em Journal of Algebra}, 337:126--140, 2011.
\newblock \href{https://arxiv.org/abs/1006.1341}{arXiv:1006.1341}.

\bibitem[Sul77]{sullivaninfinitesimal}
D.~Sullivan.
\newblock Infinitesimal computations in topology.
\newblock {\em Institut des Hautes \'Etudes Scientifiques. Publications
  Math\'ematiques}, 47:269--331, 1977.

\bibitem[Use15]{usefi}
H.~Usefi.
\newblock Isomorphism invariants of enveloping algebras.
\newblock In {\em Noncommutative rings and their applications}, volume 634 of
  {\em Contemp. Math.}, pages 253--265. AMS, 2015.
\newblock \href{https://arxiv.org/abs/1411.3734}{arXiv:1411.3734}.

\bibitem[Val20]{vallette}
B.~Vallette.
\newblock Homotopy theory of homotopy algebras.
\newblock {\em Annales de l'Institut Fourier (Grenoble)}, 70(2):683--738, 2020.
\newblock \href{https://arxiv.org/abs/1411.5533}{arXiv:1411.5533}.

\end{thebibliography}

\end{document}